\documentclass[10pt]{amsart}
\usepackage[a4paper, margin=1in]{geometry}
\usepackage[utf8]{inputenc}
\usepackage{mathtools}
\usepackage{amsmath}
\usepackage{amsthm}
\usepackage{amssymb}
\usepackage{mathrsfs}
\usepackage{enumitem}
\usepackage{tikz-cd}
\usepackage{dsfont}
\newcommand{\spa}[2]{\operatorname{span}_{#1}\left\{#2\right\}}

\newcommand{\Z}{\mathbb{Z}} 
\newcommand{\N}{\mathbb{N}} 
\newcommand{\gen}[1]{\langle #1 \rangle}
\newcommand{\paren}[1]{\left\{ #1 \right\}}
\newcommand{\p}[1]{\left( #1 \right)}

\newcommand{\F}{\mathbb{F}}
\newcommand{\Q}{\mathbb{Q}}

\newcommand{\gr}{\operatorname{gr}} 

\newcommand{\FG}{{\F_p\dbl G\dbr}}

\newcommand{\ur}{\mathfrak{u}}

\newcommand{\CVec}{{}_{\mathbb{F}_p}\mathbf{cvec}}

\newcommand{\iid}{\mathrm{id}}

\numberwithin{equation}{section}

\newtheorem{theorem}{Theorem}[section]

\newtheorem{lemma}[theorem]{Lemma}
\newtheorem{fact}[theorem]{Fact}
\newtheorem{corollary}[theorem]{Corollary}
\newtheorem{proposition}[theorem]{Proposition}
\theoremstyle{definition}
\newtheorem{definition}[theorem]{Definition}
\newtheorem{remark}[theorem]{Remark}
\newtheorem{example}[theorem]{Example}

\newtheorem{question}[theorem]{Question}

\theoremstyle{plain}
\newtheorem{teoInt}{Theorem}

\newcommand{\caB}{\mathcal{B}}
\newcommand{\caW}{\mathcal{W}}
\newcommand{\ucaW}{\underline{\caW}}
\newcommand{\End}{\mathrm{End}}
\newcommand{\op}{\mathrm{op}}

\newcommand{\eps}{\varepsilon}

\newcommand{\euV}{\mathscr{V}}
\newcommand{\euE}{\mathscr{E}}

\newcommand{\eua}{\mathbf{a}}
\newcommand{\ddGamma}{\ddot{\Gamma}}
\newcommand{\dbl}{[\![}
\newcommand{\dbr}{]\!]}
\newcommand{\bool}{\mathbf{bool}}
\newcommand{\kernel}{\mathrm{ker}}
\newcommand{\cl}{\mathrm{cl}}
\newcommand{\uomega}{\underline{\omega}}
\newcommand{\uD}{\mathbb{D}}
\newcommand{\Syl}{\mathrm{Syl}}

\newcommand{\propp}{\mathrm{pro-}p}
\newcommand{\image}{\mathrm{im}}
\newcommand{\chrs}{\mathrm{char}}
\newcommand{\euA}{\mathscr{A}}
\newcommand{\euP}{\mathscr{P}}
\newcommand{\argu}{\hbox to 1.5ex{\hrulefill}}

\title{Generalising a Theorem of Lichtman}
\author{Giorgio Leoni}
\author{Conchita Martínez Pérez}
\author{Thomas Weigel}
\date{today}

\subjclass[2020]{Primary 20J06, 20F36; Secondary 57M07, 55P20}

\keywords{Descending central series, Magnus Lie ring, dimension subgroups, restricted Zassenhaus $\F_p$-Lie algebras and cohomology rings}

\thanks{\noindent
The second named author is partially supported by the Spanish Government PID2021-126254NB-I00 and PID2024-155800NB-C32 and also by Departamento de Ciencia, Universidad y Sociedad del 
Conocimiento del Gobierno de Arag{\'o}n (grant code: E22-23R: ``{\'A}lgebra y Geometr{\'i}a'').
The first and third named authors are members of the
Gruppo Nazionale per le Strutture Algebriche, Geometriche e le loro Applicazioni
(GNSAGA), which is part of the Istituto Nazionale di Alta Matematica (INdAM)}

\begin{document}

\begin{abstract}
We show that under a suitable additional hypothesis the restricted Zassenhaus $\F_p$-Lie algebra or the rational Magnus Lie algebra of a free amalgamated product is the free amalgamated product of the corresponding Lie algebras of the factors. This generalises a Theorem of A.I.\,Lichtman, who proved the analoguous statement for free products. Our conditions include the case when the amalgamated product is a retract in both factors. As a by-product, we show that a free product of residually torsion free nilpotent groups amalgamated along retracts is also residually torsion free nilpotent and obtain also some results on cohomological completeness. In the final sections we apply our main results to two recently raised open questions.
\end{abstract}

\maketitle

\section{Introduction}\label{sec:introduction}
As Efim Zelmanov's solution of the restricted Burnside problem has shown (cf. \cite{MVL:resburn}), 
Lie theoretic methods are undoubtedly one of the most powerful tools in group theory.  Nevertheless, few results are known (cf. \cite{FalkRandell}, \cite{Lichtman}, \cite{MRW}) which can be used for an explicit computation of the (rational) Magnus Lie algebra or of the restricted Zassenhaus $\F_p$-Lie algebra of a (pro $p$-)group $G$. In this context, we call
the $\N$-graded Lie ring $\gr_\bullet(G)=\oplus_{k\in\N}\gamma_k(G)/\gamma_{k+1}(G)$ the \emph{Magnus} Lie ring of $G$, where $\gamma_k(G)$ is the $k$-th term of the descending central series of $G$. Moreover, we use $\gr^{[0]}_\bullet(G)=\gr_\bullet(G)\otimes_\Z\Q$ to indicate the \emph{rational Magnus Lie algebra} of $G$. 

For a prime number $p$, we denote by $\gr^{[p]}_\bullet(G)=\oplus_{k\in\N}D_k(G,\F_p)/D_{k+1}(G,\F_p)$ the \emph{restricted Zassenhaus} $\F_p$-Lie algebra of $G$ (cf. \cite{Zassenhaus}). Here, for a commutative ring $F$,  $D_k(G,F)$  is the $k$-th dimension subgroup of the $F$-group algebra $F G$, i.e., the group consisting of those elements $g\in G$ for which $1-g$ lies in $(\omega_G^F)^k$, where $\omega_G^F$ denotes the augmentation ideal of $F G$.  It turns out that the rational Magnus Lie algebra can also be obtained from the rational dimension subgroups $D_k(G,\Q)$ making our notations consistent. Consequently, we are able to deal with these two types of algebras at the same time.

Dimension subgroups with integer coefficients are rather mysterious; however, thanks to results by Jennings, Lazard and Quillen (see Section \ref{sec:dcs}) when one works with coefficients in a field, they can be described in terms of the group only. 

Let $p$ be either zero or a prime. One of the few available results that allows  an explicit computation of the Lie algebra $\gr^{[p]}_\bullet(G)$ in some cases is a theorem by 
Lichtman \cite{Lichtman} who proved that the functor $\gr^{[p]}_\bullet(-)$  respects free products. 

 On the other hand  Falk and Randell (cf. \cite{FalkRandell}) showed that even the Magnus Lie ring functor $\gr_\bullet(-)$ behaves well for \emph{almost direct products} which are semidirect products $Q\ltimes N$ where the action of the group $Q$ is trivial on the abelianization of $N$. It is an easy observation that their result yields the same for $\gr^{[p]}_\bullet(-)$ (Section \ref{sec:proofAB}) with a $p$-local version of almost direct products. Our first main result can be seen as a generalization of both Lichtman's theorem (to free products with amalgamation) and Falk-Randell's (to certain HNN extensions). More explicitly, we show

\begin{teoInt} \label{thm:graded} Let $p$ be either zero or a prime. 
\begin{itemize}
\item[\textup{(i)}] Let $G_1$, $G_2$ be groups with a common subgroup $H$ which is strictly $p$-embedded in both. Then
\begin{equation}
    \label{eq:iden1}
\gr^{[p]}_\bullet(G)=\gr^{[p]}_\bullet(G_1)\ast_{\gr^{[p]}_\bullet(H)}\gr^{[p]}_\bullet(G_2)
\end{equation}
where $G=G_1\ast_H G_2$.

\item[\textup{(ii)}] Let $H$ be a strictly $p$-embedded subgroup of $G_1$ and let $\varphi\colon H\to H$ be an almost $p$-trivial automorphism. Let $G=G_1\ast_{H,t}$ be the associated HNN-extension. Then
\begin{equation}
    \label{eq:ident2}
\gr^{[p]}_\bullet(G)=\gr^{[p]}_\bullet(G_1)\ast_{\gr^{[p]}_\bullet(H),t}
\end{equation}
\end{itemize}
 \end{teoInt}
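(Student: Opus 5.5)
We do not yet know the precise definition of ``strictly $p$-embedded'', so we work with the property it is evidently meant to provide: $H$ is a retract-like subgroup whose filtration is induced from the ambient group, i.e. $D_k(H,\F)=H\cap D_k(G_i,\F)$. Here $\F=\F_p$ or $\Q$. We also assume that the induced map $\gr^{[p]}_\bullet(H)\to\gr^{[p]}_\bullet(G_i)$ is injective, and in fact split as a map of (restricted) Lie algebras whenever $H$ is a retract. The argument passes through the group algebra, using the Jennings–Lazard–Quillen description recalled in Section \ref{sec:dcs}. That description identifies the associated graded $\gr(\F G)=\oplus_k(\omega_G^\F)^k/(\omega_G^\F)^{k+1}$ with the (restricted) universal enveloping algebra $U^{[p]}(\gr^{[p]}_\bullet(G))$. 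In characteristic $0$ we use the completed version together with the primitive elements.

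For (i), write $L=\gr^{[p]}_\bullet(G_1)\ast_{\gr^{[p]}_\bullet(H)}\gr^{[p]}_\bullet(G_2)$.

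\begin{enumerate}[label=\textbf{Step \arabic*.}, leftmargin=*]
\item The inclusions $G_i\to G$ induce maps $\gr^{[p]}_\bullet(G_i)\to\gr^{[p]}_\bullet(G)$ that agree on $\gr^{[p]}_\bullet(H)$. They therefore yield a canonical homomorphism $\Phi\colon L\to\gr^{[p]}_\bullet(G)$.
\item $\Phi$ is surjective. Since $G$ is generated by $G_1\cup G_2$, $\gr^{[p]}_\bullet(G)$ is generated as a restricted Lie algebra by degree-one elements, and these come from $G_1$ and $G_2$. This uses Quillen's theorem that $\gr^{[p]}_\bullet$ is generated in degree one, up to the $p$-map.
\item $\Phi$ is injective; this is the main step. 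We pass to enveloping algebras, since $U$ turns amalgamated free products of Lie algebras into amalgamated free products of associative algebras:
\[
U(L)=U(\gr G_1)\ast_{U(\gr H)}U(\gr G_2).
\]
We also have $\F G=\F G_1\ast_{\F H}\F G_2$. It therefore suffices to show that the augmentation filtration on $\F G$ is the ``free product filtration''. Concretely, $(\omega_G^\F)^k$ should be spanned by products $x_1\cdots x_r$ with $x_j\in(\omega_{G_{i_j}}^\F)^{k_j}$ and $\sum k_j\ge k$, with control over normal forms.
\item To obtain this control we use the hypothesis on $H$. We choose filtered complements $\F G_i=\F H\oplus V_i$ compatible with the augmentation filtrations, so that $\gr(\F G_i)=\gr(\F H)\oplus\gr V_i$. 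If $H$ is a retract, $V_i$ can be taken to be the kernel of the induced map $\F G_i\to\F H$. The normal form theorem for amalgamated products of algebras then gives
\[
\F G=\F H\otimes\Big(\bigoplus_{\text{alternating words}}V_{i_1}\otimes\cdots\otimes V_{i_r}\Big).
\]
We then check that the augmentation filtration of $\F G$ equals the tensor product filtration. Once it does, $\gr(\F G)\cong U(L)$ as algebras.
\item Comparing primitives, or using PBW and the injectivity of $L\to U(L)$, gives that $\Phi$ is an isomorphism.
\end{enumerate}

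The hard part is the filtration equality in Step 4. The inclusion ``$\supseteq$'' is clear. The inclusion ``$\subseteq$'' requires that the tensor product filtration be multiplicative, i.e. that a product of two normal-form words re-expands with filtration degree bounded below by the sum. The hypothesis on $H$, strict embedding and compatible complements, is exactly what lets the reduction at the junction $V_{i}\cdot V_{i}\to \F H\oplus V_i$ preserve degrees. I would prove this by induction on word length.

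For (ii), run the same argument with $\F G$ realised as an HNN extension of algebras, $\F G_1\ast_{\F H,t}$, using Britton normal forms. The almost $p$-trivial hypothesis ensures that $\varphi$ induces the identity on $\gr^{[p]}_\bullet(H)$ modulo higher terms, so conjugation by $t$ preserves the filtration. This makes $t-1$ a degree-one element and places the graded object in the Lie HNN form, in the spirit of Falk–Randell.
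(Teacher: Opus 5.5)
Your outline for (i) is the paper's: use $\F G=\F G_1\ast_{\F H}\F G_2$, show that the augmentation filtration of $\F G$ equals the normal-form (``standard'') filtration, take $\gr$ of the amalgam, and return to Lie algebras via Quillen's isomorphism and PBW. \textbf{The gap is Step~4, which is exactly where the hypothesis must be used.} You \emph{choose} filtration-compatible complements but never derive them from strict $p$-embedding; you only treat retracts.

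What the normal form and your induction on word length actually need is that $\F G_i$ is filtered free as a right $\F H$-module. That means a right $\F H$-basis $\mathcal{B}_i\ni 1$ with $\nu(\sum_x x c_x)=\min_x(\nu(x)+\nu(c_x))$. Then $V_i$ should be the $\F$-span of $\mathcal{B}_i\setminus\{1\}$, so that $\F G_i=\F H\oplus V_i\,\F H$. A vector-space complement $\F G_i=\F H\oplus V_i$ has the wrong size for your displayed normal form. Even in the retract case, the kernel of $\F G_i\to\F H$ only gives a bimodule complement, not a filtered basis.

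The paper obtains filtered freeness in four steps:
\begin{enumerate}
\item Strict $p$-embedding means $\gr^{[p]}_\bullet(H)\to\gr^{[p]}_\bullet(G_i)$ is injective.
\item Restricted PBW then makes $\ur(\gr^{[p]}_\bullet(H))\to\ur(\gr^{[p]}_\bullet(G_i))$ graded free.
\item Quillen's isomorphism transfers this to $\gr_\bullet\F H\to\gr_\bullet\F G_i$.
\item Lazard's lemma lifts a homogeneous basis to a filtered-free one.
\end{enumerate}
The last step needs the filtrations to be separated, i.e.\ $G_i$ residually $[p]$ (Lichtman's theorem). So the paper first proves (i) in that case. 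It then reduces the general case by passing to $G_i/K_i$, where $K_i=\bigcap_n\gamma_n^{[p]}(G_i)$ and $K$ is the normal subgroup of $G$ generated by $K_1,K_2$. This uses $G/K\cong (G_1/K_1)\ast_{H/(H\cap K_1)}(G_2/K_2)$ and $K\le\gamma_n^{[p]}(G)$ for all $n$. Neither the separatedness issue nor this reduction appears in your proposal.

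For (ii), the paper does not use algebra HNN extensions. It writes $G=G_1\ast_H G_2$ with $G_2=\langle t\rangle\ltimes_\varphi H$, an almost $p$-direct product. The $p$-local Falk--Randell lemma (Bellingeri--Gervais plus the Zassenhaus/Stallings comparison) shows that $H$ is strictly $p$-embedded in $G_2$, and that $\gr^{[p]}_\bullet(G_2)=\gr^{[p]}_\bullet(\langle t\rangle)\ltimes_d\gr^{[p]}_\bullet(H)$. So (ii) follows directly from (i).

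Your Britton-normal-form route would have to reprove all of this, and it misplaces the hypothesis. Conjugation by any group element preserves the augmentation filtration, so that is not what almost $p$-triviality buys. What it gives is $\gr\varphi=\mathrm{id}$ on $\gr_\bullet\F H$. Hence, with $t^{-1}ct=\varphi(c)$, the commutator $c(t-1)-(t-1)c=t(\varphi(c)-c)$ has valuation $>\nu(c)$, and this is what induces the derivation $d$.
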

 
 Here, we say that the subgroup $H$ of $G$ is \emph{strictly $p$-embedded} if the inclusion map $\iota\colon H\to G$ induces an injective map $\gr_\bullet^{[p]}(\iota)\colon\gr^{[p]}_\bullet(H)\to
\gr^{[p]}_\bullet(G)$  (see Section \ref{sec:dcs}). This is the case if $H$ is a retract. 
Similarly, an automorphism $\varphi\colon H\to H$ is said to be
$p$-almost trivial, if $\gr^{[p]}_\bullet(\varphi)\colon
\gr^{[p]}_\bullet(H)\to\gr^{[p]}_\bullet(H)$ is the identity on $\gr^{[p]}_\bullet(H)$.
The conclusion of Theorem A appeared in the PhD thesis of the first named author in the context of pro-$p$ groups \cite{GiorgioThesis}.
 Sjogren showed in \cite{Sjogren} that the quotients $D_i(G,\Z)/\gamma_i(G)$ are torsion groups.
 So if the groups $G_1$ and $G_2$ are finitely generated and $\gr_\bullet(G_1)$ and $\gr_\bullet(G_2)$ are torsion free, then the dimension subgroups coincide with the terms of the descending central series. Hence in this case one may use Theorem \ref{thm:graded} to deduce a similar result for the Magnus Lie ring $\gr_\bullet(G)$.

 The proof of Theorem \ref{thm:graded}  has the following consequence

 \begin{teoInt} \label{teo:strictly}
 Let $p$ be either zero or a prime and let $G$ be either a free amalgamated product or an almost $p$-trivial 
 \textup{HNN}-extension such that the edge groups are strictly $p$-embedded in the vertex groups. Then the vertex groups are strictly $p$-embedded in $G$. Moreover, if the vertex groups are residually $[p]$, then  $G$ is residually $[p]$.
 \end{teoInt}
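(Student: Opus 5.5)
We plan to derive Theorem~\ref{teo:strictly} from the construction used to prove Theorem~\ref{thm:graded}, by building the relevant filtration explicitly rather than invoking the final identification \eqref{eq:iden1}. We treat the amalgamated product $G=G_1\ast_H G_2$ first; the HNN case runs in parallel. The core move is to define a candidate descending filtration $(G_{(k)})_{k\ge 1}$ of $G$ with three properties:
\begin{itemize}
\item[(a)] it is a $[p]$-filtration, i.e.\ $[G_{(k)},G_{(l)}]\subseteq G_{(k+l)}$, with $G_{(k)}^p\subseteq G_{(pk)}$ when $p$ is prime and $G_{(k)}/G_{(k+1)}$ torsion free when $p=0$;
\item[(b)] it satisfies $G_{(k)}\cap G_i=D_k(G_i,\F_p)$ (respectively $D_k(G_i,\Q)$);
\item[(c)] its associated graded Lie algebra is the amalgamated product $L=\gr^{[p]}_\bullet(G_1)\ast_{\gr^{[p]}_\bullet(H)}\gr^{[p]}_\bullet(G_2)$.
\end{itemize}
The natural construction takes the finite-dimensional nilpotent quotients. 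Let $N_i=D_k(G_i,\cdot)$. By strict $p$-embeddedness, $H\cap N_1=H\cap N_2=D_k(H,\cdot)$, so the quotients $G_1/N_1$ and $G_2/N_2$ share the common subgroup $H/D_k(H)$. We form $G\to (G_1/N_1)\ast_{H/D_k(H)}(G_2/N_2)$, pass to a suitable nilpotent or $p$-quotient of class $<k$ of this amalgam, and define $G_{(k)}$ as the kernel.

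To obtain (b) and (c), we use Theorem~\ref{thm:graded}, or rather its underlying step, applied to the corresponding amalgamated product of Lie algebras $L$. Because strict embedding gives injectivity of $\gr^{[p]}_\bullet(H)\to\gr^{[p]}_\bullet(G_i)$, the amalgamated product $L$ of (restricted) Lie algebras contains each $\gr^{[p]}_\bullet(G_i)$ injectively; this is the Lie analogue of the normal form theorem, obtained via the universal enveloping algebras $U(L)=U(L_1)\ast_{U(L_H)}U(L_2)$ and PBW. Since $D_\bullet(G,\cdot)$ is the fastest descending $[p]$-filtration (Lazard/Quillen, Section~\ref{sec:dcs}), we get $D_k(G)\subseteq G_{(k)}$. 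Combining this with $D_k(G_i)\subseteq D_k(G)\cap G_i\subseteq G_{(k)}\cap G_i=D_k(G_i)$ forces equality throughout. Hence $\gr^{[p]}_\bullet(G_i)\to\gr^{[p]}_\bullet(G)$ is injective, which is exactly the statement that $G_i$ is strictly $p$-embedded in $G$.

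For residual $[p]$-ness, take $1\neq g\in G$ and write it in reduced form $g=g_1g_2\cdots g_n$ with $g_j\in G_{i_j}\setminus H$. Since the $G_i$ are residually $[p]$ and $H$ is closed in the relevant filtration topology, we choose $k$ so large that each $g_j\notin H\,D_k(G_{i_j})$; this uses that $H$ is separable there, which we expect to follow from $H\cap D_k(G_i)=D_k(H)$ together with the retract-like injectivity. The image of $g$ in $(G_1/N_1)\ast_{H/D_k(H)}(G_2/N_2)$ is then still reduced, hence nontrivial. We finish by invoking that amalgamated products of residually $[p]$ (e.g.\ finite $p$- or torsion-free nilpotent) groups along subgroups with compatible filtrations are residually $[p]$, in the spirit of Higman's theorem for $p$-groups. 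For the HNN extension $G_1\ast_{H,t}$, almost $p$-triviality of $\varphi$ makes $\varphi$ induce the identity on $H/D_k(H)$ modulo higher terms, so the same quotient construction goes through, with Britton's lemma replacing the normal form.

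The main obstacle is the last step: showing that the finite amalgams $(G_1/N_1)\ast_{H/D_k(H)}(G_2/N_2)$ are themselves residually $[p]$ in a way that separates the chosen reduced word. For this we would use the filtration $G_{(\bullet)}$ from (a)--(c) itself, whose graded pieces are controlled by $L$, rather than cite Higman.
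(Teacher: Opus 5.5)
Your proposal has a genuine gap at each of its two key steps.

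\emph{Strict embedding.} Any quotient of $\bar G_k=(G_1/N_1)\ast_{H/D_k(H)}(G_2/N_2)$ in which $\gamma_k^{[p]}$ vanishes factors through $\bar G_k/\gamma_k^{[p]}(\bar G_k)$. Since $N_i=\gamma_k^{[p]}(G_i)\subseteq\gamma_k^{[p]}(G)$ and $G\to\bar G_k$ is onto, the preimage of $\gamma_k^{[p]}(\bar G_k)$ in $G$ is exactly $\gamma_k^{[p]}(G)$. So $G_{(k)}\supseteq\gamma_k^{[p]}(G)$, with equality for the natural choice. Property (b) is therefore at least as strong as the statement $G_i\cap\gamma_k^{[p]}(G)=\gamma_k^{[p]}(G_i)$ you are trying to prove, and the ``fastest filtration'' sandwich is tautological. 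The only input you give for (b) and (c) is that $\gr^{[p]}_\bullet(G_i)$ embeds in the abstract amalgam $L$ (PBW plus Proposition~\ref{prop:normalform}). Nothing relates $L$ to $G$, i.e.\ shows that $L\to\gr^{[p]}_\bullet(G)$ is injective on $\gr^{[p]}_\bullet(G_i)$. If you cite the natural isomorphism of Theorem~\ref{thm:graded}(i) outright, your PBW remark already gives the first claim and (a)--(c) are superfluous. The paper instead works in the group algebra $\F G=\F G_1\ast_{\F H}\F G_2$. It takes filtered-free bases via Proposition~\ref{prop:strictly-embedded}, shows the standard and augmentation filtrations coincide (Proposition~\ref{prop:standardaugmentation}), and uses strictness of $\F G_i\to\F G$ (Lemma~\ref{lem:normalform-filtered-strict}) to get $1-g\in\F G_1\cap\omega_G^n=\omega_{G_1}^n$.

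\emph{Residual $[p]$-ness.} Choosing $k$ with $g_j\notin H\,D_k(G_{i_j})$ needs $H$ to be closed in the $[p]$-topology of $G_i$. This does not follow from strict embedding, and in fact the conclusion fails. Let $G_1=\langle a\rangle\cong\Z$, $G_2=\langle b\rangle\cong\Z$ and $H=\langle a^q\rangle=\langle b^q\rangle$, with $q\geq 2$ and $p\nmid q$ if $p$ is prime. Since $q\Z\cap p^s\Z=p^s(q\Z)$ for all $s$ (and $\gamma_n^{[0]}(\Z)$ is trivial for $n\geq 2$), $H$ is strictly $p$-embedded and $\Z$ is residually $[p]$.
\begin{itemize}
\item[(1)] For $p$ prime, $q\Z+p^s\Z=\Z$, so $H$ is dense and $a\in H\,D_k(G_1)$ for every $k$; your choice of $k$ is impossible.
\item[(2)] For $p=0$ separability does hold, but then $\bar G_k=G$, so your last step would need $G=\langle a,b\mid a^q=b^q\rangle$ to be residually torsion free nilpotent.
\end{itemize}
It is not. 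The element $ab^{-1}$ is nontrivial by the normal form. But $q$-th roots are unique in the finite $p$-groups $G/\gamma_n^{[p]}(G)$ (resp.\ in torsion free nilpotent groups), so $ab^{-1}\in\bigcap_n\gamma_n^{[p]}(G)$. The HNN extension $\langle a,t\mid [t,a^q]=1\rangle$ behaves the same way, while the first claim of the theorem does hold in these examples.

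So the residual statement needs an extra hypothesis, such as $H$ being a retract or an almost $p$-retract, which gives closedness (Lemma~\ref{lem:proptop}). Under the retract hypothesis your reduced-word strategy is exactly that of Theorem~\ref{thm:restfreenilp}, with the stage amalgams handled by Lemma~\ref{lemma_nilpotent}. Note that the paper's route does not cover this example either: $\F[q\Z]\to\F[\Z]$ is not filtered free. Its associated graded map is an isomorphism, so by Lemma~\ref{lemma_bases} a filtered basis would have to be $\{1\}$. The converse half of Lemma~\ref{lemma_bases} only produces a filtered-free family, not a generating one. Hence the implication (i)$\Rightarrow$(ii) of Proposition~\ref{prop:strictly-embedded}, on which separatedness of $\F G$ rests, fails here.
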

 The notion of being \emph{residually $[p]$} is defined in Section \ref{sec:dcs} and is equivalent to being residually a nilpotent $p$-group with bounded torsion if $p$ is a prime, or being residually torsion free nilpotent, if $p=0$. In particular, this theorem implies that free amalgamated products along retracts of residually torsion free nilpotent groups are also residually torsion free nilpotent.  In Section \ref{sec:rtfn} we give a direct group theoretical proof of this fact, which might be known to experts, but we have not been able to find any proof in the literature.

Amalgamating along almost $p$-retracts also behaves well with respect to cohomological properties. Recall that a group $G$ is said to be 
\emph{cohomologically $p$-complete} (cf. \cite{Serre}) if the natural map from the continuous cohomology with $\F_p$ coefficients of its pro-$p$ completion to the ordinary cohomology with $\F_p$ coefficients of the group is an isomorphism (see Section \ref{sec:chpc}). In \cite{AF}, Aschenbrebber and Friedl give conditions on a graph of groups $\mathcal{G}$ that imply that the fundamental group $\pi(\mathcal{G})$ is cohomologically $p$-complete. See also \cite{JS}, where the case of \emph{algebraically clean} graphs of groups is considered (these are graphs of groups where vertex and edge groups are free and edge groups are free factors of vertex groups). Here we use results in \cite{AF} in order to show:

\begin{teoInt}\label{thm:pcomplete} Let $p$ be a prime and let $G$ be either a free amalgamated product or an almost $p$-trivial \textup{HNN}-extension of finitely 
generated groups such that the vertex groups are residually $p$-finite and cohomologically $p$-complete, and that the edge groups are almost $p$-retracts in the vertex groups. Then $G$ is cohomologically $p$-complete. 
\end{teoInt}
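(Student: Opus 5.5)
We prove Theorem~\ref{thm:pcomplete} with the criterion of Aschenbrenner and Friedl \cite{AF} for fundamental groups of graphs of groups. Their result, as we use it, says the following. Let $\mathcal{G}$ be a finite graph of groups with finitely generated vertex and edge groups, and let $G=\pi(\mathcal{G})$. Suppose that
\begin{itemize}
\item[(a)] $G$ is residually $p$-finite and the pro-$p$ topology of $G$ induces on every vertex and edge group its full pro-$p$ topology, so that $\widehat{G}_p$ is the fundamental pro-$p$ group of the graph of pro-$p$ completions $\widehat{\mathcal{G}}_p$ and the latter is proper;
\item[(b)] every vertex group and every edge group is cohomologically $p$-complete.
\end{itemize}
Then $G$ is cohomologically $p$-complete. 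The reason is that the Mayer--Vietoris sequences for $G$ (acting on its Bass--Serre tree) and for $\widehat{G}_p$ (acting on its pro-$p$ tree) are compared term by term, and the five lemma applies. The plan is therefore to verify (a) and (b) in both cases, the amalgam $G_1\ast_H G_2$ and the almost $p$-trivial HNN-extension $G_1\ast_{H,t}$.

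\textbf{Step 1: the edge group is cohomologically $p$-complete.} Let $r\colon G_i\to H$ be a retraction, or its almost-retract analogue. Then $H^\bullet(\widehat{H}_p,\F_p)\to H^\bullet(H,\F_p)$ is a retract of $H^\bullet(\widehat{(G_i)}_p,\F_p)\to H^\bullet(G_i,\F_p)$ in the category of arrows, by naturality of pro-$p$ completion applied to $H\to G_i\to H$. A retract of an isomorphism is an isomorphism, so $H$ is cohomologically $p$-complete.

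For an almost $p$-retract we expect to have only a retraction up to something invisible to $\F_p$-cohomology and to pro-$p$ completion. The same argument should then go through once we check that the composite $H\to G_i\to H$ induces isomorphisms on $\widehat{(\cdot)}_p$ and on $H^\bullet(-,\F_p)$. Residual $p$-finiteness of $H$ is inherited from $G_i$.

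\textbf{Step 2: properness and $p$-separability.} A retract $H\le G_i$ of a residually $p$-finite group is closed in the pro-$p$ topology, and $G_i$ induces on $H$ its full pro-$p$ topology, since every normal subgroup of $p$-power index $N\trianglelefteq H$ pulls back to $r^{-1}(N)$. Hence $\widehat{H}_p\to\widehat{(G_i)}_p$ is injective.

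For residual $p$-finiteness of $G$ together with the compatibility of topologies, we would use Theorem~\ref{teo:strictly} and its proof. By that theorem the vertex groups are strictly $p$-embedded in $G$ and $G$ is residually $[p]$, hence residually $p$-finite. Strict $p$-embedding at the level of all dimension subgroups $D_k(-,\F_p)$ gives $D_k(G,\F_p)\cap G_i=D_k(G_i,\F_p)$. Since for finitely generated groups the $D_k(-,\F_p)$ form a cofinal system in the pro-$p$ topology, the topology induced on $G_i$, and hence on $H$, is its full pro-$p$ topology. This gives (a). In the HNN case the almost $p$-triviality of $\varphi$ is what allows Theorem~\ref{teo:strictly} to apply, and the rest is identical.

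\textbf{Step 3: conclusion.} Apply the Aschenbrenner--Friedl criterion, or redo the comparison of the Mayer--Vietoris sequences directly:
\[
\cdots\to H^{n-1}(H)\to H^n(G)\to H^n(G_1)\oplus H^n(G_2)\to H^n(H)\to\cdots
\]
and its continuous pro-$p$ analogue, using exactness of the pro-$p$ sequence for proper amalgams, followed by the five lemma.

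\textbf{Main obstacle.} We expect Step 2 to be the main obstacle: deducing that the induced pro-$p$ topologies agree (efficiency of the graph of groups) from strict $p$-embeddedness. The key point to check carefully is the equality $D_k(G,\F_p)\cap G_i=D_k(G_i,\F_p)$. This holds because the map on graded pieces is injective for all $k$, which gives the equality by induction on $k$, provided the filtrations are separated.
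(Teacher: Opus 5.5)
Your argument has a genuine gap in Step~1, in the generality of the statement, which concerns almost $p$-retracts. The ``retract of an isomorphism'' argument needs a homomorphism $r\colon G_i\to H$ (or at least $\widehat{(G_i)}_p\to\widehat{H}_p$) restricting to $\iid_H$. Definition~\ref{def:almostretract} does not provide one, because the complement $K$ need not be normal. For example, the normal factor $N$ of an almost $p$-direct product $G=Q\ltimes N$ is an almost $p$-retract, as remarked after Definition~\ref{def:almostretract}. Yet a retraction of $G$, of $G/\gamma_n^{[p]}(G)$, or of $\widehat{G}_p$ onto the corresponding image of $N$ exists only if $Q$ acts on that image by inner automorphisms. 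So ``the same argument should then go through'' is not a proof.

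Step~3 depends on exactly this missing piece. The five lemma needs $H^\bullet_{\mathrm{cont}}(\widehat{H}_p,\F_p)\to H^\bullet(H,\F_p)$ to be bijective in degree $n-1$ and injective in degree $n$, for every $n$. The paper's proof does not establish this either. It applies Theorem~\ref{thm:efficient}, whose condition (iv) requires vertex \emph{and} edge groups to be cohomologically $p$-complete, and it declares (iv) to be part of the hypothesis. So you have two options. You can read the hypothesis as including the edge groups, in which case Step~1 is superfluous. Or you can restrict Step~1 to genuine retracts, where your argument is correct and fills precisely the point the paper passes over.

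The rest of your route differs from the paper's in a useful way. The paper checks the hypotheses of Theorem~\ref{thm:efficient}:
\begin{itemize}
\item[(a)] strict embedding of vertex groups in $G$, via Theorem~\ref{teo:strictly};
\item[(b)] closedness of edge groups in vertex groups, via Lemma~\ref{lem:proptop};
\item[(c)] residual $p$-finiteness of every truncated $\pi_1(\mathcal{G}_n)$, again via Theorem~\ref{teo:strictly}.
\end{itemize}
These give $p$-efficiency (vertex groups closed in $G$), after which it quotes \cite[Corollary~5.12]{AF}.

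You skip closedness and the $\mathcal{G}_n$ entirely. Strict $p$-embedding together with cofinality of the $\gamma_k^{[p]}$ gives full induced pro-$p$ topologies, hence a proper pro-$p$ graph of completions, and you then compare Mayer--Vietoris sequences. This is correct and shorter, since the five-lemma comparison only needs properness, not separability of vertex groups in $G$. But it means you must carry out that comparison yourself, including exactness of the pro-$p$ sequence, which comes from the standard pro-$p$ tree of a proper pro-$p$ amalgam or HNN extension. You cannot cite \cite{AF} as a black box, because their criterion is stated for $p$-efficient graphs of groups, and that includes the closedness you did not verify.

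Finally, the separation proviso in your ``main obstacle'' is unnecessary. The equality $D_k(G,\F_p)\cap G_i=D_k(G_i,\F_p)$ is exactly the conclusion of Theorem~\ref{teo:strictly}.
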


{In the last two sections of the paper we consider some applications of our main results.
First, we consider twisted RAAGs and use Theorem \ref{thm:graded} to describe their 2-Zassenhaus Lie algebras providing an affirmative answer to  \cite[Question 4.3]{Blumer}.}
Secondly, we apply a pro-$p$ version of Theorem~\ref{thm:graded} to describe the
$p$-Zassenhaus restricted Lie algebra of some twisted
right-angled Artin pro-$p$ groups providing an affirmative answer to \cite[Question 1.4]{BQW}. 
More applications can be found in a forthcoming paper \cite{ELMP}, where we use Theorem \ref{thm:graded} to describe the $p$-Zassenhaus Lie algebras of a certain family of Artin groups that includes all even Artin groups of FC-type and Theorem \ref{thm:pcomplete} to show that residually $p$-finite groups in that family are also cohomologically $p$-complete.

\section{Amalgamated free products of algebras}
\label{s:alg}
From now on we use the notion \emph{algebra}
as a short form for \emph{associative algebra with 1}.

\begin{definition}\label{definition_FAP} Let $\F$ be a field and let $A_1$, $A_2$ and $C$ be $\F$-algebras with morphisms $\lambda_1\colon C \to A_1$ and $\lambda_2\colon C \to A_2$. The \emph{free product} of $A_1$ and $A_2$ \emph{amalgamated along $C$} is the quotient algebra 
\begin{align*}
    A_1 \ast_{C} A_2\coloneqq T(A_1,A_2)/I
\end{align*}
where $T(A_1,A_2)=\bigoplus_n T_n({A_1},{A_2})$, $T_n({A_1},{A_2})$ is the sum of all tensor products of n terms over $A_1\cup A_2$ and 
$I\subseteq T(A_1,A_2)$ is the ideal generated by the elements
\begin{align*}
    \paren{\,a \otimes b-ab, \hspace{0.5em} \lambda_{1}(c)-\lambda_{2}(c) \mid a,b \in A_i,\hspace{0.5em} i=1,2,\hspace{0.5em} c \in C\,}.
\end{align*}
Multiplication is defined as the $\F$-bilinear map given by juxtaposition of terms.
\end{definition}

Let $\psi_i\colon A_i \hookrightarrow T(A_1,A_2) \twoheadrightarrow A_1 \ast_{C} A_2$ be the natural morphisms for $i=1,2$. Then - by definition - there is a commuting diagram
    \begin{equation*}
        \begin{tikzcd}
            C \arrow[r, "\lambda_1"] \arrow[d, "\lambda_2"] & A_1 \arrow[d, "\psi_1"] \\
A_2 \arrow[r, "\psi_2"]                & A_1 \ast_{C} A_2,         \end{tikzcd}
    \end{equation*}
i.e., $\psi_1\circ\lambda_1=\psi_2\circ\lambda_2$ and $A = A_1 \ast_{C} A_2$ is characterized by the following universal property: If $D$ is an $F$-algebra and $\phi_i: A_i \to D$, $i=1,2$,  are morphisms such that $\phi_1\circ \lambda_1=\phi_2 \circ \lambda_2$, then there exists a unique morphism $\phi: A \to D$ such that $\phi_1=\phi \circ \psi_1$ and $\phi_2=\phi \circ \psi_2$.

In general, the condition that $\lambda_1$ and $\lambda_2$ are injective does not imply that the morphisms $\psi_1$ and $\psi_2$ are. 
Nevertheless, this is true whenever both $A_1$ and $A_2$ are free right $C$-modules as one is able to establish a normal form-type result which will be described in the sequel. Let $C$, $A_1$ and $A_2$ be $\F$-algebras, and let $\lambda_1: C \to A_1$ and $\lambda_2: C \to A_2$ be injective morphisms of $\F$-algebras. For simplicity we identify the element
$1_{A_1}$ with $1_{A_2}$ and denote it simply by $1$. We also assume that $\lambda_i\colon C\to A_i$ are given by inclusion and
 that $A_i$, $i=1,2$ are free right $C$-modules
 with $C$-basis $\caB_i$ satisfying $1 \in \caB_i$. 
Put $\caB_i^\sharp=\caB\setminus\{1\}$. Consider the tuples
\begin{align*}
    (v_{1}, \dots, v_{n})\in\caB_{i_1}^\sharp\times\cdots\times\caB_{i_n}^\sharp
\end{align*}
for $n \in \mathbf{\Z}_{\geq 1}$, $i_k \in \paren{1,2}$ for $k=1,\dots,n$ and $()=1$ for $n=0$. 
Then $(v_{1}, \dots, v_{n})$ is said to be reduced, if 
either $n=0$ or $n\geq 1$ and $i_k\neq i_{k+1}$ for all $k=1,\dots,n-1$. 
Denote by $\caW$ the set of all reduced tuples and let $\ucaW$
denote the free right $C$-module over the set $\caW$. By construction, one has
 a map $\Psi_\circ\colon \caW \to A_1\ast_{C} A_2$ given by
\begin{align*}
    \Psi_\circ(v_1, \dots, v_n)
    =\psi_{i_1}(v_1)\dots \psi_{i_n}(v_n),
\end{align*}
which yields a homomorphism of right $C$-modules $\Psi\colon\ucaW\to A_1\ast_{C} A_2$.
The following property may be considered as a normal form theorem.

\begin{proposition}\label{prop:normalform}
The homomorphism of right $C$-modules $\Psi\colon\ucaW\to A_1\ast_{C} A_2$.
is an isomorphism.
\end{proposition}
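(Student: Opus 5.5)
We prove surjectivity and injectivity of $\Psi$ separately. Injectivity is the substantial part; for it we use the standard van der Waerden trick of letting $A_1\ast_C A_2$ act on $\ucaW$.

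\emph{Surjectivity.} The algebra $A_1\ast_C A_2$ is spanned by products $\psi_{j_1}(a_1)\cdots\psi_{j_m}(a_m)$ with $a_k\in A_{j_k}$. Using the relations $a\otimes b=ab$, we first merge adjacent factors coming from the same $A_i$. Then we argue by induction on $m$, working from the right. Write the last factor as $a_m=\sum_{v\in\caB_{j_m}} v c_v$ with $c_v\in C$. The summand with $v=1$ is just $c_1$; since $\lambda_1(c)=\lambda_2(c)$ in the amalgam, it can be absorbed into the previous factor, which shortens the word. The remaining summands end in $\psi_{j_m}(v)c_v$ with $v\in\caB_{j_m}^\sharp$. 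For these we apply induction to the prefix $\psi_{j_1}(a_1)\cdots\psi_{j_{m-1}}(a_{m-1})$, which lies in $\Psi(\ucaW)$. It then suffices to check that $\Psi(\ucaW)$ is stable under right multiplication by $\psi_i(A_i)$. To see this, take a reduced tuple $w$ and $a\in A_i$. If $w$ does not end in $\caB_i^\sharp$, write $a=\sum vc_v$ and append. If $w$ ends in $u\in\caB_i^\sharp$, expand $ua=\sum v c'_v$ in $A_i$; the term $v=1$ yields a shorter tuple times an element of $C$, and this is handled by induction on length. So it is cleaner to set the whole argument up as induction on tuple length, showing that $\Psi(\ucaW)$ is a right ideal containing $1$.

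\emph{Injectivity.} We construct a right $A_1\ast_C A_2$-module structure on $\ucaW$ such that $1\cdot\Psi(w)=w$ for every $w\in\caW$, where $1=()$ is the empty tuple. Since $\Psi$ is a right $C$-module map, this gives a left inverse of $\Psi$. By the universal property it suffices to give right actions $\rho_i$ of $A_i$ on $\ucaW$ (algebra maps $A_i^{\op}\to\End_\F(\ucaW)$) that agree on $C$, with $C$ acting by the given right module structure. To define $\rho_i$, split $\caW=\caW_i\sqcup\caW_i'$, where $\caW_i$ consists of the tuples ending in $\caB_i^\sharp$. Let $\caW_i'$ be the remaining tuples, which also includes $()$. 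Then
\[
\ucaW\cong \bigl(\underline{\caW_i'}\otimes_C A_i\bigr)
\]
as right $C$-modules, via $(w',v)\mapsto (w',v)$ for $v\in\caB_i^\sharp$ and $(w',1)\mapsto w'$. Here $\underline{\caW_i'}$ is free on $\caW_i'$, and we use that $A_i$ is free as a right $C$-module with basis $\caB_i\ni 1$. The right-hand side carries an obvious right $A_i$-action by right multiplication, and we transport it to define $\rho_i$. Its restriction to $C$ is the original right $C$-action on $\ucaW$, independently of $i$. Hence $\rho_1$ and $\rho_2$ induce a right action of $A_1\ast_C A_2$, and an induction on $n$ shows $()\cdot\psi_{i_1}(v_1)\cdots\psi_{i_n}(v_n)=(v_1,\dots,v_n)$ for reduced tuples.

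The main obstacle is checking that the displayed identification is a well-defined isomorphism of right $C$-modules. A free right $C$-module tensored over $C$ requires a left $C$-structure on $\underline{\caW_i'}$, which we don't naturally have. To avoid this, we use the decomposition $A_i=C\oplus\bigoplus_{v\in\caB_i^\sharp}vC$ of $A_i$ as a left-free-on-one-side object, and replace the tensor product by $\bigoplus_{w'\in\caW_i'} w'\cdot A_i$, taking one copy of $A_i$ per $w'$ as a right module. Then $\ucaW\cong\bigoplus_{w'\in\caW_i'}A_i$, which needs no left structure. The action is right multiplication in each copy of $A_i$, and it is manifestly compatible on $C$ because $C$ acts in each copy by right multiplication.
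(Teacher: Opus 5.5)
Your proposal is correct and follows the paper's argument: in both, $A_1$ and $A_2$ act on the right of $\ucaW$ compatibly on $C$, the universal property gives a map to $\End_{\F}(\ucaW)^{\op}$, and each $\omega$ is recovered as the image of the empty tuple; moreover, the action you transport through $\ucaW\cong\bigoplus_{w'\in\caW_i'}A_i$ is exactly the paper's formula \eqref{eq:disp4}. Your transport-of-structure description makes the homomorphism property automatic, whereas the paper leaves it as a ``lengthy but elementary calculation''; your worry about $\underline{\caW_i'}\otimes_C A_i$ was also unfounded, since a tensor product over $C$ only needs a right $C$-structure on the left factor.
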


\begin{proof}
As $\Psi$ is certainly surjective, it suffices to show that $\Psi$ is injective.
By hypothesis, for $y,a\in A_i$, $i=1,2$, there exist unique elements $c_x^{(i)}(y,a)\in C$, $x\in\caB_i$,
satisfying
\begin{align}
    y\cdot a&=c_1^{(i)}(y,a)+\sum_{x\in\caB_i^\sharp} x\cdot c_x^{(i)}(y,a).\label{eq:disp1}\\
\intertext{Moreover, $S^{(i)}(a,y)=\{\,x\in\caB_i\mid c_x^{(i)}(y,a)\not=0\,\}$ is a finite set.
In particular, for $y,a, b\in A_i$, $i=1,2$, one obtains}
 y\cdot a\cdot b&=\sum_{x\in\caB_i} x\cdot c_x^{(i)}(y,a)\cdot b.\label{eq:disp2}\\
 &=\sum_{x\in\caB_i} x\cdot c_x^{(i)}(y,ab).\label{eq:disp3}
\end{align}
For $a\in A_i$, $(v_1,\ldots,v_n)\in\caW$ and $c\in C$,  put
\begin{equation}\label{eq:disp4}
\phi_i(a)((v_1,\ldots,v_n)c)=
\begin{cases}
    (v_1,\ldots,v_{n-1})\cdot c_1^{(i)}(v_n c,a)+\\
    \sum_{x\in\caB_i^\sharp} (v_1,\ldots,v_{n-1},x)\cdot c_x^{(i)}(v_n c,a)&\ \text{if $i_n=i$,}\\
    (v_1,\ldots,v_n)\cdot c_1^{(i)}(c,a)+\\
    \sum_{x\in\caB_i^\sharp} (v_1,\ldots,v_{n},x)\cdot c_x^{(i)}(c,a)&\ \text{if $i_n\not=i$.}
\end{cases}
\end{equation}

Then \eqref{eq:disp4} defines a map 
$\phi_i(a)\in\End_{\F}(\ucaW)$ which coincides with right multiplication by 
$a\in A_i$ under the map $\Psi$, i.e., $\Psi(\phi_i(a)(\omega))=\Psi(\omega)\cdot \lambda_i(a)$
for $a\in A_i$ and $\omega\in \ucaW$, $i=1,2$.
Moreover,  from \eqref{eq:disp2} and \eqref{eq:disp3} one concludes by a lengthy but elementary calculation, that 
$\phi_i\colon A_i\to\End_{\F}(W)^{\op}$, $i=1,2$,  are $\F$-algebra homomorphisms satisfying
$\phi_1\vert_C=\phi_2\vert_C$. Hence, by the universal property, $\phi_i$, $i=1,2$,
induce a unique homomorphism $\phi\colon A_1\ast_C A_2\longrightarrow \End_{\F}(\ucaW)^{\op}$
satisfying $\phi\circ\psi_i=\phi_i$, $i=1,2$. By construction, one has
\begin{equation}
    \label{eq:disp5}
\phi(\Psi(\omega))(1)=\omega
\end{equation}
for all $\omega\in\caW$ and thus for all $\omega\in\ucaW$ showing that $\Psi$ is injective.
\end{proof}

\begin{corollary}\label{corollary_injective}
Under the previous hypotheses, the maps $\psi_1$ and $\psi_2$ are injective.
\end{corollary}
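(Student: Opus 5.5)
The plan is to deduce the injectivity of $\psi_i$ directly from Proposition~\ref{prop:normalform} by factoring $\psi_i$ through the isomorphism $\Psi$. I treat $i=1$; the case $i=2$ is symmetric.

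First I describe $\psi_1$ on a general element of $A_1$. By hypothesis $A_1$ is a free right $C$-module with basis $\caB_1$ containing $1$. So every $a\in A_1$ has a unique expression
\[
a = c_1 + \sum_{x\in\caB_1^\sharp} x\cdot c_x ,
\]
with $c_1, c_x\in C$ and only finitely many $c_x$ nonzero. Since $\psi_1$ is an algebra homomorphism and $\psi_1\vert_C=\psi_2\vert_C$, I get
\[
\psi_1(a)=\psi_1(1)\,c_1+\sum_{x\in\caB_1^\sharp}\psi_1(x)\,c_x .
\]
Here $C$ acts on the right of $A_1\ast_C A_2$ through the common image of $C$.

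Next I compare this with $\Psi$. Consider the right $C$-linear map $\theta_1\colon A_1\to\ucaW$ sending $1\mapsto()$ and $x\mapsto(x)$ for $x\in\caB_1^\sharp$. The empty tuple and the length-one tuples $(x)$ are reduced, so these are distinct basis elements of the free right $C$-module $\ucaW$. Hence $\theta_1$ maps the basis $\caB_1$ injectively onto a subset of the basis $\caW$, and so $\theta_1$ is injective. By the definition of $\Psi_\circ$ we have $\Psi(())=1$ and $\Psi((x))=\psi_1(x)$. Together with the $C$-linearity of $\Psi$ this gives $\psi_1=\Psi\circ\theta_1$.

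The one point that needs care is that the right $C$-module structure on $A_1\ast_C A_2$ used in defining $\Psi$ agrees with right multiplication by $\psi_1(c)=\psi_2(c)$. This holds by construction. Once it is checked, $\psi_1$ is a composition of two injective maps, since $\Psi$ is an isomorphism by Proposition~\ref{prop:normalform}, and so $\psi_1$ is injective. I do not expect a real obstacle here; the content is entirely in Proposition~\ref{prop:normalform}.
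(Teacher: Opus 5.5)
Your proof is correct and is essentially the paper's argument. The paper observes that, under the isomorphism $\Psi$ of Proposition~\ref{prop:normalform}, $\psi_i(a)$ corresponds to $()\cdot c_1^{(i)}(1,a)+\sum_{x\in\caB_i^\sharp}(x)\cdot c_x^{(i)}(1,a)$, which vanishes only for $a=0$; your injective map $\theta_i$ is exactly $\Psi^{-1}\circ\psi_i$, so the factorization $\psi_i=\Psi\circ\theta_i$ makes that same observation explicit.
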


\begin{proof}
By construction, one has for $a\in A_i$ that 
$\Psi(\psi_i(a))=\sum_{x\in\caB_i} x\cdot c_x^{(i)}(1,a)$ which is $0$ if, and only if, $a=0$.
\end{proof}

\section{Filtered and graded algebras}
\label{s:filgr}

\begin{definition} An $\F$-algebra (with identity) $A$ together with a chain of ideals 
$$A=F^0A\supseteq F^1A\supseteq\ldots\supseteq F^nA\supseteq F^{n+1}A\supseteq\ldots$$
satisfying that for all $a\in F^nA$ and $b\in F^mA$ one has $ab\in F^{n+m}A$
is said to be a \emph{filtered $\F$-algebra}.
If $\bigcap_{n\geq1} F^nA=0$, then we say that the filtered $\F$-algebra $F^\bullet A$ is \emph{separated}.
Let $F^\bullet A$ and $F^\bullet B$ be filtered $\F$-algebras. A homomorphism $\varphi\colon A\to B$ of $\F$-algebras is said to be \emph{filtered} if for all $n\geq 0$, $\varphi(F^nA)\subseteq F^nB$. We say that $\varphi$ is \emph{strictly filtered} or just \emph{strict}, if for any $n\geq 0$, one has $\varphi(A)\cap F^nB=\varphi(F^nA)$.

Let $F^\bullet A$ be a filtered $\F$-algebra. A right $A$-module $M$ together with a chain of submodules 
$$M=F^0M\supseteq F^1M\supseteq\ldots\supseteq F^nM\supseteq F^{n+1}M\supseteq$$
is said to be \emph{filtered},
if for all $a\in F^nA$ and $x\in F^mM$ one has $xa\in F^{n+m}M$.
\end{definition}

Quite often it will be useful to think of filtrations in terms of valuations. A 
\emph{valuation} on the $\F$-algebra $A$ is a map
$\nu_A\colon A\to\Z_{\geq 0}\cup\{\infty\}$ satisfying for every $a,b\in A$
\begin{itemize}
\item[(i)] $\nu_A(a-b)\geq\inf(\nu_A(a),\nu_A(b))$,

\item[(ii)] $\nu_A(ab)\geq\nu_A(a)+\nu_A(b)$,

\item[(iii)] $\nu_A(1)=0$.
\end{itemize}
Given a valuation $\nu_A$ on $A$, we may define a filtration $F^\bullet A$ on $A$ by
$$F^nA=\{a\in A\mid\nu_A(a)\geq n\}.$$
Conversely, any filtration $F^\bullet A$ on $A$ yields the valuation
$$\nu_A(a)=\sup\{n\mid a\in F^nA\}.$$
Furthermore, if $A$, $B$ are filtered $\F$-algebras with valuations $\nu_A$, $\nu_B$, a homomorphism $\varphi\colon A\to B$ is filtered if, and only if, $\nu_B(\varphi(a))\geq\nu_A(a)$, and strictly filtered if, and only if, for every $a\in A$ there is some $a'\in A$ such that $\varphi(a)=\varphi(a')$ and $\nu_B(\varphi(a'))=\nu_A(a')$. 

In the next definition we follow \cite[2.1.16]{Lazard2}. 
Here and throughout the paper, the notion module will stand for \emph{right module}.
\begin{definition}
    Let $F^\bullet A$ be a filtered $\F$-algebra and let $F^\bullet M$ be a filtered $F^\bullet A$-module. We say that a set of elements $\Omega=\{ x_i\mid i \in I\,\}$ of $M$ is \emph{filtered $A$-free} or simply \emph{filtered free} if $1\in\Omega$ and 
    \begin{itemize}
\item[(i)]    for any family $(a_i)_{i\in I}$ of elements of $A$ where almost all $a_i$'s are zero,
    \begin{align*}
        \nu_M\p{\sum_{i \in I} x_ia_i}=\inf\{\,\nu_A(a_i)+\nu_M(x_i)\mid i\in I\,\}.
    \end{align*}
    
    \item[(ii)] the family $(x_i)_{i \in I}$ is $A$-linearly independent.
    \end{itemize}
    A module that is generated by a filtered free family is said to be \emph{filtered free}.
\end{definition}

\begin{remark} If $A$ and $M$ are separated, i.e., if $\cap F^nA=0=\cap F^nM$, then (i) implies (ii). To see this, note that if $\sum_{i \in I} x_ia_i=0$, then $\nu_M(\sum_{i \in I} x_ia_i)=+\infty$ so $\nu_M(x_i)+\nu_A(a_i)=+\infty$ for all  $i \in I$.
 Since $A$ is separated, if $a_i\neq 0$ for some $i$ then $\nu_A(a_i)<+\infty$. It follows that $\nu_M(x_i)=+\infty$. But as $M$ is also separated, $x_i=0$.
\end{remark}

\begin{definition} Let $C$, $A$ be filtered algebras. A  filtered morphism $\phi: C \to A$ induces a filtered $C$-module structure on $A$ by $a \cdot c \coloneqq a\phi(c)$. 
We say that $\phi: C \to A$ is \emph{filtered free} if $A$ is filtered free as a filtered $C$-module.
\end{definition}

\begin{remark} Assume that $C$ and $A$ are filtered .
    Then, if $\phi: C \to A$ is filtered free, it is strict. 
 In order to see this, take  $c \in C$. For the element  $1$ in the basis we have
    \[\nu_A(\phi(c))= \nu_A(1\cdot c) =\nu_C(c)+\nu_A(1)=\nu_C(c). \]
   If moreover $C$ and $A$  are separated, then $\phi$ is also injective: take $c \in C$ such that $\phi(c)=0$ . Then $\infty
        =\nu_A(\phi(c))=\nu_C(c)$ thus $c=0$. 
        \end{remark}

    If $A$ is a filtered algebra and $M$ is a free $A$-module with basis $(x_i)_{i\in I}$, we may endow $M$ with a filtered $A$-module structure by choosing as $\nu_M(x_i)$ an element in $\Z_{\geq0}\cup\{+\infty\}$  for every $i \in I$, and defining for any $m \in M$ with $m=\sum_i x_i a_i$,
    \begin{align*}
        \nu_M(m)\coloneqq\inf(\nu_M(x_i)+\nu_R(a_i)).
    \end{align*}
    With such filtration, $(x_i)_{i\in I}$ is a filtered $A$-basis for $M$ and if the values of $\nu_M(x_i)$ are all nonnegative integers, then $M$ is separated.

    If additionally $M$ is an algebra and we choose $\nu_M(x_i)$ so that, for any $i$ and $j$,
    \begin{align*}
        \nu_M(x_ix_j)\geq \nu_M(x_i)+\nu_M(x_j)
    \end{align*}
    then $M$ is a filtered algebra.

This implies

\begin{lemma}\label{lem:filtered-basis}
Let $C$, $A_1$ and $A_2$ be filtered algebras, and $\lambda_1: C \to A_1$ and $\lambda_2: C \to A_2$ be filtered free morphisms, with filtered basis $\mathcal{B}_i$, with $1 \in \mathcal{B}_i$.
By Proposition~\ref{prop:normalform}, the image of $\Psi_\circ$ forms a basis for $A_1\ast_{C} A_2$. Then we can endow $B=A_1\ast_{C} A_2$ with the algebra filtration induced by
 \begin{align*}
         \nu(\psi_{i_1}(v_1)\dots \psi_{i_n}(v_n)) \coloneqq \nu_{A_{i_1}}(v_1)+\dots+\nu_{A_{i_n}}(v_n).
    \end{align*}
    so that $A_1\ast_{C} A_2$ is filtered free as $C$-module (with the $C$-module structure given by the composition $\psi_1 \circ \lambda_1=\psi_2 \circ \lambda_2$), with filtered basis formed by the image of $\Psi_\circ$. Moreover, if $C$, $A_1$ and $A_2$ are separated, then also $B$ is separated.
\end{lemma}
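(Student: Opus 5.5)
We define $\nu$ on $B$ as in the recipe preceding the lemma. By Proposition~\ref{prop:normalform}, $B$ is a free right $C$-module with basis $\Psi_\circ(\caW)$, and this basis contains $1$ (the empty tuple). We assign to each basis word $w=\psi_{i_1}(v_1)\cdots\psi_{i_n}(v_n)$ the value $\nu(w)=\sum_k\nu_{A_{i_k}}(v_k)$, and set $\nu(\sum_w w c_w)=\inf_w(\nu(w)+\nu_C(c_w))$. Then $\nu(1)=0$, and the ultrametric inequality (i) holds because $\nu$ is an infimum over coordinates. By construction, $\Psi_\circ(\caW)$ is a filtered $C$-basis. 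Multiplicativity of the $F^nB$ under right multiplication by $C$ follows from $\nu_C(cc')\ge\nu_C(c)+\nu_C(c')$. What remains is the algebra axiom (ii), $\nu(xy)\ge\nu(x)+\nu(y)$, and separatedness.

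For (ii), right multiplication by $C$ is already controlled, so it suffices to prove
\[
\nu(w\cdot a)\ge\nu(w)+\nu_{A_i}(a)\qquad(w\in\Psi_\circ(\caW),\ a\in A_i).
\]
Indeed, $B$ is generated as an algebra by $\psi_1(A_1)\cup\psi_2(A_2)$. Any $y$ is a $C$-combination of words, and each word is a product of letters. Iterating the displayed inequality letter by letter, then using (i) and the $C$-action, gives $\nu(xy)\ge\nu(x)+\nu(y)$.

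To prove the displayed inequality, I use the explicit formula \eqref{eq:disp4} for $\phi_i(a)$, which computes $w\cdot a$ in normal form. If the last letter of $w$ is $v_n\in A_i$, then $wa=(v_1,\dots,v_{n-1})c_1^{(i)}(v_n,a)+\sum_x(v_1,\dots,v_{n-1},x)c_x^{(i)}(v_n,a)$. Since $\caB_i$ is a filtered free basis of $A_i$, we have $\nu_{A_i}(v_na)=\inf_x(\nu_{A_i}(x)+\nu_C(c_x^{(i)}(v_n,a)))$, and this is at least $\nu_{A_i}(v_n)+\nu_{A_i}(a)$ because $A_i$ is a filtered algebra. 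Consequently each term has value at least $\sum_{k<n}\nu(v_k)+\nu(v_n)+\nu(a)$. If the last letter lies in the other factor, the same argument applies with $v_n$ replaced by $1$. More generally, for $w\cdot c$ with $c\in C$ we apply the same formula with $v_nc$ in place of $v_n$. This handles the general element $\sum_w wc_w$, since $\nu_{A_i}(v_nc)\ge\nu(v_n)+\nu_C(c)$.

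For separatedness, suppose $\nu(b)=\infty$ for some $b=\sum_w wc_w$. Then $\nu(w)+\nu_C(c_w)=\infty$ for every $w$. Each $\nu(w)$ is finite: $\nu_{A_i}(v)<\infty$ for basis elements $v\ne0$ because $A_i$ is separated. Hence $\nu_C(c_w)=\infty$, so $c_w=0$ because $C$ is separated. I expect the main obstacle to be the bookkeeping in the case analysis of \eqref{eq:disp4}, in particular ensuring that the filtered-freeness of $\caB_i$ gives exactly the equality of valuations used above.
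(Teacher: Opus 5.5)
Your proof is correct and uses the same construction as the paper. You define $\nu$ on the normal-form basis of Proposition~\ref{prop:normalform} and extend it by taking the infimum over coordinates, so filtered freeness over $C$ holds by construction. Your separatedness argument is exactly the paper's: word values are finite because the $A_i$ are separated, so $\nu_C(c_w)=\infty$ forces $c_w=0$.

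Where you differ is in verifying $\nu(xy)\ge\nu(x)+\nu(y)$. The paper declares the lemma a consequence of the preceding remark, which says it suffices to check $\nu(x_ix_j)\ge\nu(x_i)+\nu(x_j)$ on pairs of basis elements. Taken literally, that criterion is not enough when the coefficient algebra is not central, because expanding $(x_ia)(x_jb)$ also requires control of $\nu(a\,x_j)$. For example, take the skew polynomial ring $\F[t][x;\sigma]$ with the $t$-adic filtration on $\F[t]$. View it as a free right $\F[t]$-module on the powers $x^k$ with $\nu(x^k)=k$, and impose $ax=x\sigma(a)$ with $\sigma(t)=t+1$. Then $\nu(x^ix^j)=i+j$, but $\nu(tx)=\nu(x(t+1))=1<\nu(t)+\nu(x)$.

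Your argument avoids this problem. You prove $\nu(y\cdot a)\ge\nu(y)+\nu_{A_i}(a)$ for arbitrary $y\in B$ and single letters $a\in A_i$, using the explicit right action \eqref{eq:disp4}. Absorbing the coefficient $c_w$ into the letter (as $v_nc_w$ or $c_wa$) handles both the reduction at the junction of two words and the non-commuting coefficients at once. The only input this needs is the nontrivial half of filtered freeness of $\caB_i$, namely $\inf_x(\nu_{A_i}(x)+\nu_C(c_x))\ge\nu_{A_i}(\sum_x xc_x)$. Your route costs a short case analysis, and in return it gives a complete verification of the algebra axiom that the paper leaves implicit.
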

\begin{proof} The lemma is a direct consequence  from the previous considerations. 
In order to check that $B$ is separated if $C$, $A_1$ and $A_2$ are, note that this implies that the values $\nu_{A_{i_1}}(v_1)+\dots+\nu_{A_{i_n}}(v_n)$ are always integers and as $C$ is also separated one concludes that also $A_1\ast_{C} A_2$ is. 
\end{proof}

We call $\nu$ the \emph{standard filtration} on $A_1\ast_{C} A_2$.


\begin{lemma}\label{lem:normalform-filtered-strict}
    With the standard filtration, $\psi_1$ and $\psi_2$ are strict
\end{lemma}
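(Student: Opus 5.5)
We need to show $\nu(\psi_i(a))=\nu_{A_i}(a)$ for all $a\in A_i$. The plan is to compute $\psi_i(a)$ in the normal form of Proposition~\ref{prop:normalform} and then read off its value from the filtered basis of Lemma~\ref{lem:filtered-basis}. This gives the valuation equality, which is the valuation form of strictness: $\psi_i(A_i)\cap F^nB=\psi_i(F^nA_i)$. Since $\nu(\psi_i(a))\ge n$ will force $\nu_{A_i}(a)\ge n$, strictness follows.

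First, write $a$ in the filtered $C$-basis $\caB_i$ of $A_i$: $a=\sum_{x\in\caB_i} x\,c_x$ with $c_x\in C$, almost all zero. Because $\caB_i$ is filtered free, we have
\[
\nu_{A_i}(a)=\inf_{x}\{\nu_{A_i}(x)+\nu_C(c_x)\}.
\]
Applying $\psi_i$ gives $\psi_i(a)=c_1+\sum_{x\in\caB_i^\sharp}\psi_i(x)c_x$, where $c_1$ means $\psi_i(\lambda_i(c_1))$. In $B$ this is exactly the expansion of $\psi_i(a)$ in the basis given by the image of $\Psi_\circ$. The summand for $x=1$ is the empty tuple $()$, and each summand with $x\in\caB_i^\sharp$ is the length-one reduced tuple $(x)$. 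This is the identity already used in the proof of Corollary~\ref{corollary_injective}.

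Next, by Lemma~\ref{lem:filtered-basis} this basis is filtered $C$-free, and the basis elements have the prescribed values $\nu(())=0=\nu_{A_i}(1)$ and $\nu(\psi_i(x))=\nu_{A_i}(x)$. Hence
\[
\nu(\psi_i(a))=\inf_x\{\nu_{A_i}(x)+\nu_C(c_x)\}=\nu_{A_i}(a).
\]
In particular $\psi_i$ is filtered. If $\nu(\psi_i(a))\ge n$, then $a\in F^nA_i$, which gives $\psi_i(A_i)\cap F^nB\subseteq\psi_i(F^nA_i)$. The reverse inclusion is filteredness.

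The main obstacle is making sure the ``standard filtration'' really assigns to $\psi_i(x)$ the value $\nu_{A_i}(x)$ and to $1$ the value $0$. A second concern is that the filtered-free condition (i) in Lemma~\ref{lem:filtered-basis} uses the $C$-action through $\psi_1\circ\lambda_1$. We also need $\nu_{A_i}(1)=0$, which holds by the valuation axiom (iii), or trivially if $\nu_{A_i}(1)=\infty$ in a degenerate case. Once these are checked from the construction, the computation above is immediate.
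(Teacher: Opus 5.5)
Your proposal is correct and follows the paper's proof essentially verbatim. Like the paper, you expand $a$ in the filtered $C$-basis $\caB_i$, recognize $\psi_i(a)$ as its expansion in the normal-form basis (empty and length-one tuples), and use filtered-freeness on both sides to get $\nu(\psi_i(a))=\nu_{A_i}(a)$; your explicit deduction of $\psi_i(A_i)\cap F^nB=\psi_i(F^nA_i)$ from this equality spells out a step the paper leaves implicit.
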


\begin{proof}
    For $i=1,2$,  take an element $a \in A_i$, with $a=\sum_{v \in \mathcal{B}_1}vc_{v}$. Then
    \begin{align*}
        \nu(\psi_i(a))
        &=\nu\p{\sum_{v \in \mathcal{B}_i}\psi_i(v)\psi_i(\lambda_i(c_{v}))}
        =\nu\p{\sum_{v \in \mathcal{B}_i}\Psi(v)c_v} \\
        &=\inf(\nu_{A_i}(v)+\nu_C(c_v))
        =\nu_{A_i}(a).
    \end{align*}
\end{proof}

\subsection{Augmented algebras}

Augmented algebras are an important particular case of filtered algebras.

\begin{definition}\label{def:augmented} An $\F$-algebra $A$ together with an algebra epimorphism $\eps:A\to F$ is said to be an \emph{augmented $\F$-algebra}. In that case $A^+=\ker\eps$ is called the \emph{augmentation ideal} and $A$ has a filtration given by $F^nA=(A^+)^n$ that we call the \emph{augmentation filtration}.
\end{definition}

For example, if $G$ is a group, then the group algebra $\F G$ is augmented, where
$\eps\colon\F G\to\F$ satisfies $\epsilon(g)=1$ for all $g\in G$. In this case we denote the augmentation ideal by $\omega_G$.

We consider now free amalgamated products of augmented algebras. Assume that $A_1$, $A_2$ and $C$ are augmented algebras endowed with  the augmentation filtrations and
$\lambda_1: C \to A_1$ and $\lambda_2: C \to A_2$ are augmented maps, i.e., maps such that $\lambda_i(C^+)\subseteq A_i^+$ for $i=1,2$. Then the universal property of push-outs implies that the free amalgamated product $B=A_1\ast_C A_2$ is also augmented and that the maps $\psi_i:A_i\to B$ are augmented.

\begin{proposition}\label{prop:standardaugmentation} Let $A_1$, $A_2$ and $C$ be augmented algebras endowed with the augmentation filtrations and let
$\lambda_1: C \to A_1$ and $\lambda_2: C \to A_2$ be augmented maps. Let $B=A_1\ast_C A_2$. If
    \begin{itemize}
       \item[\textup{(i)}] $\lambda_1$ and $\lambda_2$ are filtered free,
        \item[\textup{(ii)}] $\psi_1(A_1^+)$ and $\psi_2(A^+_2)$ generate $B^+$,
    \end{itemize}
    then the augmentation filtration on $B$ coincides with the standard filtration.
\end{proposition}
\begin{proof} First note that by (i) and Lemma \ref{lem:filtered-basis} one is able to define the standard filtration on $B$. Denote by $\nu'$ the augmentation filtration of $B$ and by $\paren{F^nB}_{n\geq 0}$ the ideals induced by the standard filtration $\nu$, so that, for any $b \in B$
    \begin{itemize}
        \item $\nu(b)\geq n$ $\iff$ $b \in F^nB$, and
        \item $\nu'(b)\geq n$ $ \iff$ $b \in (B^+)^n$.
    \end{itemize} 
    By construction of $\nu$, $\psi_1$ and $\psi_2$ are filtered with respect to $\nu$. So both $\psi_1(A_1^+)$ and $\psi_2(A_2^+)$ are contained in $F^1B$  hence (ii) implies $B^+\subseteq F^1B$, and
    \begin{align*}
        (B^+)^n\subseteq (F^1B)^n\subseteq F^nB,
    \end{align*}
    from which it follows that $\nu(b)\geq \nu'(b)$ for any $b \in B$. 
    
    On the other hand, the maps $\psi_1$ and $\psi_2$ are augmented, so they are filtered with respect to $\nu'$. This implies $\nu_{A_i}(v)\leq\nu'(\psi_i(v))$ for any $v\in A_i$  and $i=1,2$. Therefore    
        \begin{align*}
        \nu'(\psi_{i_1}(v_1)\dots \psi_{i_n}(v_n)) &\geq \nu'(\psi_{i_1}(v_1))) + \dots +\nu'(\psi_{i_n}(v_n))) \\
        &\geq \nu_{A_{i_1}}(v_1)+\dots+\nu_{A_{i_n}}(v_n)=\nu(\psi_{i_1}(v_1)\dots \psi_{i_n}(v_n)).\\
    \end{align*}
    so $\nu=\nu'$.\end{proof}

\subsection{Graded algebras}

\begin{definition}
An $\F$-algebra $A$ is \emph{$\Z_{\geq 0}$-graded} or simply \emph{graded} if it admits a vector space decomposition
$$A=\oplus_{n\geq 0}(A^n)$$
such that $A^nA^m\subseteq A^{n+m}$. If $A$, $B$ are graded, a morphism $\varphi:A\to B$ is \emph{graded} if $\varphi(A^n)\subseteq B^n$.
If $A$ is a graded algebra, a \emph{graded $A$-module} is an $A$-module $M$ that admits an $\F$-vector space decomposition
$$M=\oplus_{n\geq 0}(M^n)$$
so that $a\in A^n$, $x\in M^m$ implies $m\cdot a\in M^{n+m}$.  Elements in a graded algebra (or module) lying in a single component $A^n$ (or $M^n$) are said to be 
\emph{homogeneous}.
\end{definition}

Analogously as we did for filtered algebras, we will need a notion of graded free modules (and maps).

\begin{definition} If $C$ is a graded algebra, a graded $C$-module $M$ is 
\emph{graded free} if it admits a basis consisting of homogeneous elements. A graded morphism $\varphi:C\to A$ is said to be \emph{graded free} if $A$ is a graded free  $C$-module. 
\end{definition}

Coming back to free amalgamated products of algebras, if  $C$, $A_1$ and $A_2$ are graded algebras, and $\lambda_1: C \to A_1$ and $\lambda_2: C \to A_2$ are graded free morphisms, with homogeneous basis $\mathcal{B}_i$, with $1 \in \mathcal{B}_i$, then 
by Proposition \ref{prop:normalform} the image of $\Psi$ forms a basis for $A_1\ast_{C} A_2$. We can endow it with a grading
\begin{align*}
    \deg(\psi_{i_1}(v_1)\dots \psi_{i_n}(v_n)) \coloneqq \deg_{A_{i_1}}(v_1)+\dots+\deg_{A_{i_n}}(v_n).
\end{align*}
so that $\psi_1 \circ \lambda_1=\psi_2 \circ \lambda_2 : C \to A_1\ast_{C} A_2$ is graded free, with homogeneous basis formed by the image of $\Psi$.

\medskip

Next, we want to relate filtered and graded algebras. The standard way to do that is using the functor $\gr$.

\begin{definition}
    Let $A$ be a filtered algebra with filtration $F^nA$, $n\geq 0$. Then 
    $$\gr_\bullet A=\oplus_{n\geq 0}\gr_nA$$ 
    with
    $\gr_nA= F^nA/F^{n+1}A$
    is a graded algebra. For each $a\in A$ with $\nu_A(a)=n$,  we denote by $\overline{a}$ the corresponding element in $\gr_n(A)\subseteq \gr_\bullet A$. Similarly we define 
    $\gr_\bullet M$ for a filtered $A$-module $M$.
        If $B$ is also filtered and $\phi: A \to B$ is a filtered morphism, we denote by $\gr_\bullet \phi: \gr_\bullet A \to \gr_\bullet B$ the graded morphism induced by $\phi$.
\end{definition}

Observe that  $\overline{a}=0$ if and only if $a\in\cap F^nA$. The following Lemma allows us to relate filtered and graded bases.

\begin{lemma}\cite[Lemma 2.3.12]{Lazard2}\label{lemma_bases}
    Let $C$ be a filtered algebra, let $M$ be a filtered $C$-module and let $X$ be a set of elements of $M$. Assume that both $C$ and $M$ are separated. 
    Then, if $X$ is a filtered $C$-basis for $M$,  $\overline{X}$ is a homogeneous $\gr_\bullet C$-basis for $\gr_\bullet M$.
    
    Conversely, if $\overline{X}$ is a homogeneous $\gr_\bullet C$-basis for $\gr_\bullet M$, then $X$ is filtered-free.

    As a consequence, if a morphism $\phi: C \to A$ of filtered algebras  is filtered free, then $\gr_\bullet \phi: \gr_\bullet C \to \gr_\bullet A$ is graded free.
\end{lemma}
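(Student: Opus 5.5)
Both directions rest on passing between the valuation on $M$ and its leading terms in $\gr_\bullet M$, with separatedness used to rule out elements of infinite valuation. Write $\nu_C$, $\nu_M$ for the valuations, and for $x\in X$ put $d_x=\nu_M(x)$. For $a\in C$ with $\nu_C(a)=n$ we have $\overline{a}\in\gr_nC$, and the product $\overline{x}\,\overline{a}$ in $\gr_\bullet M$ is the class of $xa$ in $F^{d_x+n}M/F^{d_x+n+1}M$.

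\emph{Forward direction.} Assume $X$ is filtered free.
\begin{enumerate}
\item[(1)] \emph{Independence.} Take a finite homogeneous relation $\sum_x \overline{x}\,\overline{a_x}=0$ in degree $N$, where the $a_x\in C$ are lifts with $d_x+\nu_C(a_x)=N$ (after discarding zero terms). Then $\nu_M(\sum_x xa_x)\geq N+1$. Filtered freeness gives $\nu_M(\sum_x xa_x)=\min_x(d_x+\nu_C(a_x))=N$, a contradiction. Every relation splits into homogeneous components, so $\overline{X}$ is $\gr_\bullet C$-independent.
\item[(2)] \emph{Generation.} Let $m\in M$ with $\nu_M(m)=N$ finite; it suffices to hit $\overline{m}$. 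Since $X$ generates, write $m=\sum_x xa_x$. By (i), $\min_x(d_x+\nu_C(a_x))=N$, and the terms with $d_x+\nu_C(a_x)\geq N+1$ vanish in $\gr_N M$. Hence $\overline{m}=\sum_{d_x+\nu_C(a_x)=N}\overline{x}\,\overline{a_x}$.
\end{enumerate}
Separatedness guarantees that every nonzero element has finite valuation and hence a nonzero symbol.

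\emph{Converse.} Assume $\overline{X}$ is a homogeneous basis of $\gr_\bullet M$; we must show (i) for each finite sum $m=\sum_x xa_x$. The inequality $\nu_M(m)\geq N:=\min_x(d_x+\nu_C(a_x))$ is automatic. If it were strict, the degree-$N$ leading terms would give $\sum\overline{x}\,\overline{a_x}=0$ in $\gr_N M$, with at least one nonzero $\overline{a_x}$. This contradicts the independence of $\overline{X}$. Here finiteness of $N$ needs some $a_x\neq0$ with finite valuation, which is exactly where separatedness of $C$ enters. Condition (ii) then follows from (i) by the preceding Remark, using that $C$ and $M$ are separated.

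\emph{Corollary and the main obstacle.} We apply the forward direction to $M=A$, viewed as a filtered $C$-module via $\phi$. Note that $\gr_\bullet$ of this module structure is precisely the module structure induced by $\gr_\bullet\phi$. The delicate point, which I expect to be the main obstacle, is that the filtered-free notion as stated asks for the family to generate $M$ only implicitly. If $X$ is merely filtered free and not generating, one would instead need completeness of $M$ to lift the generation in $\gr_\bullet M$ back to $M$. Since the converse only claims that $X$ is filtered free, not a basis, no completeness hypothesis is needed, and the argument above goes through.
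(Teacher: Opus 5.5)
Your proof is correct and follows the paper's own argument: the independence step of the forward direction and the leading-term contradiction in the converse are exactly the paper's. The one addition is your explicit check that $\overline{X}$ generates $\gr_\bullet M$, using that $X$ generates $M$ together with property (i); the paper's proof leaves this step implicit.
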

\begin{proof}
    Suppose that $X=(x_i)_{i\in I}$ is a filtered $C$-basis for $M$.
     Let $(a_i)_{i \in I}$ be a family of elements of $C$ where almost all $a_i$'s are zero, such that $\sum_i \overline{x_i}\cdot\overline{a_i}=0$.
We may assume that $\overline{x_i}\cdot\overline{a_i}$ are homogeneous elements of the same degree $k$. Then $\sum_i \overline{x_i}\cdot\overline{a_i}=0 $ is equivalent to $\sum_ix_ia_i \in F^{k+1}M$ and this is equivalent to
$$\nu_M\p{\sum_ix_ia_i} \geq k+1.$$
But since $(x_i)_{i\in I}\subseteq X$ and $X$ is a filtered basis, for every $i \in I$
\begin{align*}
    \nu_M\p{\sum_ix_ia_i}=\nu_M(x_i)+\nu_C(a_i)=\deg(\overline{x_i})+\deg(\overline{a_i})=\deg{\overline{x_i}\cdot\overline{a_i}}=k,
\end{align*}
contradicting the hypothesis.
Conversely, suppose that $(\overline{x_i})_{i\in I}$ is a homogeneous $\gr_\bullet C$-basis for $\gr_\bullet M$. Let $(a_i)_{i \in I}$ be a family of elements of $C$ where almost all $a_i$'s are zero, such that
\begin{align*}
    \nu_M\p{\sum_ix_ia_i} > \min_i\paren{\nu_M(x_i)+\nu_C(a_i)} \eqqcolon k.
\end{align*}
Let $J\subseteq I$ be the subset of indexes $i$ such that $\nu_M(x_i)+\nu_C(a_i)=k$, i.e. the set of indexes for which the elements $\overline{x_i}\cdot\overline{a_i}$ are homogeneous of degree $k$. Then the component of $\sum_{i}\overline{x_i}\cdot\overline{a_i}$  of degree $k$ is
\begin{align*}
    \sum_{i\in J}\overline{x_i}\cdot\overline{a_i}=\sum_{i \in J}x_ia_i+F^{k+1}M=0,
\end{align*}
which contradicts the hypothesis.
\end{proof}

Using Lemma \ref{lemma_bases} one is able to compare the free amalgamated product of filtered algebras with the free amalgamated product of the graded versions. Observe first that
 if $C$, $A_1$ and $A_2$ are filtered algebras, and $\lambda_1: C \to A_1$, $\lambda_2: C \to A_2$ are filtered free morphisms, the universal property of push-outs implies that there is a natural map 
 $$\overline{\tau}_\bullet: \gr_\bullet A_1 \ast_{\gr_\bullet C} \gr_\bullet A_2 \to \gr_\bullet B.$$ 

\begin{proposition}\label{prop:filtered-graded}
    Let $C$, $A_1$ and $A_2$ be filtered separated algebras, and let $\lambda_1: C \to A_1$ and $\lambda_2: C \to A_2$ be filtered free morphisms. If $B\coloneqq A_1 \ast_C A_2$ is filtered with the standard filtration, then the natural map $\overline{\tau}_\bullet: \gr_\bullet A_1 \ast_{\gr_\bullet C} \gr_\bullet A_2 \to \gr_\bullet B$ is an isomorphism of graded algebras.
\end{proposition}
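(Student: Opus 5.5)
The plan is to compare the normal-form bases on both sides and check that $\overline{\tau}_\bullet$ carries one onto the other.

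\emph{Source side.} By Lemma~\ref{lemma_bases}, since $\lambda_i$ is filtered free with filtered basis $\caB_i$ and $C$, $A_i$ are separated, the map $\gr_\bullet\lambda_i$ is graded free with homogeneous basis $\overline{\caB_i}$. Moreover $1\in\overline{\caB_i}$, and $\overline{\caB_i}^\sharp=\overline{\caB_i^\sharp}$: distinct elements of $\caB_i$ have distinct images, since they are $\gr_\bullet C$-linearly independent. Hence Proposition~\ref{prop:normalform} applies to $\gr_\bullet A_1\ast_{\gr_\bullet C}\gr_\bullet A_2$. It shows that this algebra is a free right $\gr_\bullet C$-module with basis the reduced products
\[
\psi_{i_1}(\overline{v_1})\cdots\psi_{i_n}(\overline{v_n}),\qquad v_k\in\caB_{i_k}^\sharp .
\]
Corollary~\ref{corollary_injective} gives that the maps $\gr_\bullet A_i\to\gr_\bullet A_1\ast_{\gr_\bullet C}\gr_\bullet A_2$ are injective, so we may treat them as inclusions.

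\emph{Target side.} By Lemma~\ref{lem:filtered-basis}, $B$ is separated and the reduced words $\Psi_\circ(v_1,\dots,v_n)$ form a filtered $C$-basis of $B$. Applying Lemma~\ref{lemma_bases} again, the classes $\overline{\Psi_\circ(v_1,\dots,v_n)}$ form a homogeneous $\gr_\bullet C$-basis of $\gr_\bullet B$. Here $C$ acts through $\psi_1\lambda_1$, which is strict by Lemma~\ref{lem:normalform-filtered-strict}, so $\gr_\bullet(\psi_1\lambda_1)$ is injective and the module structures agree.

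\emph{Matching the bases.} First, $\overline{\tau}_\bullet$ is $\gr_\bullet C$-linear, because it restricts to $\gr_\bullet\psi_i$ on $\gr_\bullet A_i$ and is an algebra map. Next, for a reduced word we have
\[
\nu\bigl(\psi_{i_1}(v_1)\cdots\psi_{i_n}(v_n)\bigr)=\sum_k\nu_{A_{i_k}}(v_k)
\]
by definition of the standard filtration. The maps $\psi_i$ are strict by Lemma~\ref{lem:normalform-filtered-strict}, so $\gr_\bullet\psi_i(\overline{v_k})=\overline{\psi_i(v_k)}$ in degree $\nu_{A_i}(v_k)$. The product in $\gr_\bullet B$ of the classes $\overline{\psi_{i_k}(v_k)}$ is the image of $\prod_k\psi_{i_k}(v_k)$ in $F^{d}B/F^{d+1}B$ with $d=\sum_k\nu(\psi_{i_k}(v_k))$. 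Since the valuation of the product equals $d$ exactly, this class is nonzero and equals $\overline{\Psi_\circ(v_1,\dots,v_n)}$. Therefore $\overline{\tau}_\bullet$ sends the normal-form basis of the source bijectively onto the homogeneous basis of $\gr_\bullet B$. A $\gr_\bullet C$-linear map carrying a basis bijectively to a basis is an isomorphism, and $\overline{\tau}_\bullet$ is a graded algebra map, so it is an isomorphism of graded algebras.

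The main obstacle is the last point: the product of leading terms in $\gr_\bullet B$ must be the leading term of the product. This holds precisely because the standard filtration assigns reduced words their additive valuation, so there is no drop in filtration degree. The remaining bookkeeping is to confirm that the graded normal form is built from exactly the same index set $\caW$ as in Proposition~\ref{prop:normalform}, which follows from the bijection $\caB_i^\sharp\to\overline{\caB_i}^\sharp$ noted above.
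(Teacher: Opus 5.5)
Your proposal is correct and follows essentially the same route as the paper: both apply Lemma~\ref{lemma_bases} and Proposition~\ref{prop:normalform} to obtain a $\gr_\bullet C$-basis of $\gr_\bullet A_1\ast_{\gr_\bullet C}\gr_\bullet A_2$, and both conclude because $\overline{\tau}_\bullet$ maps it bijectively onto the homogeneous basis of classes of reduced words in $\gr_\bullet B$. You also justify the step the paper calls ``clear'': the standard valuation is additive on reduced words, so the product of the leading terms $\overline{\psi_{i_k}(v_k)}$ equals $\overline{\Psi_\circ(v_1,\dots,v_n)}$, and the index sets match via $\caB_i^\sharp\leftrightarrow\overline{\caB_i}^\sharp$.
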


\begin{proof} Consider, for $i=1,2$,  a filtered $C$-basis  $\mathcal{B}_i$ for $A_i$. Then, by Lemma~\ref{lem:filtered-basis}, one has
    a filtered $C$-basis $\mathcal{V}$ for $B$ with respect to the standard filtration. (Recall that the $C$-module structure of $B$ is given by the maps $\psi_1 \circ \lambda_1=\psi_2 \circ \lambda_2$).  Then, Lemma \ref{lemma_bases} implies that $\overline{\mathcal{V}}$ is a graded $\gr_\bullet C$-basis of 
    $\gr_\bullet B$.

    On the other hand, Lemma \ref{lemma_bases} also implies that $\overline{\mathcal{B}_i}$ is a graded $\gr_\bullet C$-basis of 
    $\gr_\bullet A_i$ and  Proposition \ref{prop:normalform} produces a graded 
    $\gr_\bullet C$-basis $\mathcal{W}$ of  
    $\gr_\bullet A_1 \ast_{\gr_\bullet C} \gr_\bullet A_2$. 
    
    Clearly, $\overline{\mathcal{V}}$ is precisely the image of $\mathcal{W}$ under the natural map $\overline{\tau}_\bullet: 
    \gr_\bullet A_1 \ast_{\gr_\bullet C} \gr_\bullet A_2 \to \gr_\bullet B$. Since $\overline{\tau}_\bullet$ induces a bijection between basis, it is an isomorphism.
\end{proof}


\section{Descending central series}\label{sec:dcs}
If $G$ is a group and $R$ is a commutative unital ring, the group ring $R G$ is augmented with augmentation map $\eps\colon R G\to R$ induced by the map $g\mapsto 1$, $g\in G$.  Therefore, $R G$ has a natural filtration by powers of the augmentation ideal
$\omega^R_G=\ker\epsilon.$

A natural question is whether one can relate this filtration with a filtration of the defining group. It is easy to see that $\omega^R_G$ is precisely the ideal of $R G$ generated by the elements $g-1$ where $g\in G$ and this leads to the definition of the \emph{dimension subgroups}
$$D_n(G,R)=\{g\in G\mid g-1\in(\omega^R_G)^n\}.$$
One can check that these are indeed subgroups of $G$ and clearly
$$G=D_1(G,R)\geq D_2(G,R)\geq\ldots\geq D_n(G,R)\geq D_{n+1}(G,R)\geq\ldots$$
In the case when we work with coefficients in $\Z$, these subgroups are extremely difficult to determine in group theoretical terms. However, working with coefficients in a field $\F$ the situation changes and there are nice description available in terms of  descending central series.

\begin{definition} 
\label{def:gamma}
Let $G$ be a group. The \emph{descending central series} of $G$ is the series given by $\gamma_1(G)=G$ and
$$\gamma_n(G)=[G,\gamma_{n-1}(G)].$$
If $p$ is either zero or a prime, we also define following \cite{BHS}, $p$-local versions by
$$\begin{aligned}
\gamma_n^{[0]}(G)&=\{x\in G\mid x^m\in\gamma_n(G)\text{ for some }m>0\},\\
\gamma_n^{[p]}(G)&=\prod_{jp^s\geq n}\gamma_j(G)^{p^s}\text{ if $p$ is a prime}.
\end{aligned}$$
In the prime case, the series $\gamma_n^{[p]}(G)$ is called  the 
\emph{$p$-Zassenhaus central series}, and there is another well known $p$-local central series, the \emph{$p$-Stallings series}, defined by
$$\gamma_{n,p}(G)=\prod_{j+s\geq n}\gamma_j(G)^{p^s}\text{ if $p$ is a prime}.$$
\end{definition}

Then one has
\begin{theorem}[Quillen-Jennings-Lazard] Let $p$ be zero or a prime and $\F$ a field of characteristic $p$. Then
$$\gamma_n^{[p]}(G)=D_n(G,\F).$$
\end{theorem}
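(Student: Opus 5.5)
The statement is the classical theorem of Jennings (for $p$ prime), Quillen and Lazard (for $p=0$), and I would prove it by establishing two inclusions separately. The easy inclusion is $\gamma_n^{[p]}(G)\subseteq D_n(G,\F)$. For this I first check that $(D_n(G,\F))_{n\geq1}$ is a central series with the closure properties that define $\gamma^{[p]}_\bullet$. The commutator property $[D_i,D_j]\subseteq D_{i+j}$ follows from the identity
$$[g,h]-1=g^{-1}h^{-1}\bigl((g-1)(h-1)-(h-1)(g-1)\bigr),$$
because $(\omega_G^\F)^i(\omega_G^\F)^j\subseteq(\omega_G^\F)^{i+j}$. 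The power conditions depend on $p$:
\begin{itemize}
\item[(a)] If $p$ is a prime, then $g\in D_j$ gives $g^{p}-1=(g-1)^{p}\in(\omega_G^\F)^{pj}$, so $D_j^{p^s}\subseteq D_{jp^s}$.
\item[(b)] If $p=0$, I show that $D_n$ is isolated, i.e.\ $x^m\in D_n$ with $m>0$ forces $x\in D_n$. Argue by induction on the largest $k$ with $x\in D_k$, assuming $k<n$. Expanding $x^m-1=((x-1)+1)^m-1$ gives $x^m-1\equiv m(x-1)\bmod(\omega_G^\F)^{k+1}$. Since $m$ is invertible in $\F$, this yields $x\in D_{k+1}$.
\end{itemize}
Since $\gamma_\bullet^{[p]}(G)$ is the fastest descending series with these properties (an induction on $n$ using the definitions in Definition~\ref{def:gamma}), the inclusion follows.

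For the reverse inclusion I would pass to graded objects. Put $L=\bigoplus_n\gamma_n^{[p]}(G)/\gamma_{n+1}^{[p]}(G)$, tensored with $\F$. For $p$ a prime this is a restricted $\F_p$-Lie algebra, with $p$-map induced by $x\mapsto x^p$; for $p=0$ it is a rational Lie algebra. The first inclusion gives a graded Lie map $\theta\colon L\to\gr_\bullet\F G$, $\bar g\mapsto\overline{g-1}$, with respect to the augmentation filtration. It extends to an algebra map $U(L)\to\gr_\bullet\F G$ from the (restricted) universal enveloping algebra. This map is surjective, because $\gr_\bullet\F G$ is generated in degree $1$ by the classes of $g-1$. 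The desired equality $\gamma_n^{[p]}=D_n$ is then equivalent to the injectivity of $\theta$ in each degree. By PBW, $L\hookrightarrow U(L)$, so it suffices to prove that $U(L)\to\gr_\bullet\F G$ is injective.

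To prove this injectivity I reduce to a nice case. Given $g\notin\gamma^{[p]}_{n}(G)$, it suffices to work in $\bar G=G/\gamma^{[p]}_{n}(G)$, because dimension subgroups map into dimension subgroups. I then pass to a finitely generated subgroup of $\bar G$ containing $g$ and witnessing the relevant commutators and powers; some care is needed here to make this reduction compatible with $\gamma^{[p]}_\bullet$. For $p$ prime this lands in a finite $p$-group $P$. There I follow Jennings: choose elements $x_1,\dots,x_r$ adapted to the series $\gamma^{[p]}_\bullet(P)$, with weights $w_i$. The ordered monomials $\prod(x_i-1)^{e_i}$ with $0\le e_i<p$ span $\F P$ compatibly with the filtration. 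A dimension count against $|P|=p^{r}$ shows that they form a filtered basis, and hence that $\dim\gr_\bullet\F P=\dim U(L)$, which gives injectivity. For $p=0$ the reduced group is a finitely generated torsion-free nilpotent group. There I would use Quillen's argument: the completion $\widehat{\F G}$ is a complete Hopf algebra, the elements $\log g$ are primitive, and the Malcev Lie algebra identifies with the primitives, so $\gr$ of the primitives is $L$ and embeds in $\gr_\bullet\widehat{\F G}=\gr_\bullet\F G$.

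The main obstacle is this injectivity step: the filtered-basis and dimension count for finite $p$-groups, respectively the Hopf-algebraic identification of primitives for $p=0$. Everything else is formal.
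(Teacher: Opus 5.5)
The paper gives no proof of this statement. It is quoted as the classical theorem of Jennings and Lazard ($p$ prime) and Jennings--Quillen ($p=0$), so your outline has to stand on its own. Several parts are sound in substance: the inclusion $\gamma_n^{[p]}(G)\subseteq D_n(G,\F)$, the construction of $\theta$, and the finite $p$-group case. In the finite $p$-group case you do not even need the ordered monomials. Since $\omega_P$ is nilpotent, $\sum_k\dim_\F\gr_k\F P=|P|=p^r=\dim_\F\ur(L)$ by restricted PBW, so the surjection $\ur(L)\to\gr_\bullet\F P$ is bijective. Your ``span compatibly with the filtration'' is exactly the hard half of Jennings' original argument, and this count avoids it.

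\textbf{The reduction to finitely generated groups runs in the wrong direction as you phrase it.} For $H\le\bar G$ one only has $\omega_H^n\subseteq\omega_{\bar G}^n\cap\F H$, so $g\notin D_n(H)$ does not give $g\notin D_n(\bar G)$. What the subgroup must witness is the membership $g-1\in\omega_{\bar G}^n$, not commutators or powers. Write $g-1=\sum_i\lambda_i(y_{i1}-1)\cdots(y_{in}-1)$, and let $H$ be generated by $g$, the $y_{ij}$ and the supports of the $\lambda_i$. Then $g\in D_n(H)$, and $\gamma_n^{[p]}(H)\le\gamma_n^{[p]}(\bar G)=1$ holds automatically. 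The finitely generated case then forces $g=1$.

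\textbf{The genuine gap is the case $p=0$.} The phrase ``$\gr$ of the primitives is $L$ and embeds in $\gr_\bullet\widehat{\F N}$'' contains the entire theorem. There are two possible filtrations on the Malcev Lie algebra $\mathfrak n$, and neither gives you the step for free.
\begin{itemize}
\item If $\mathfrak n$ is filtered by $\mathfrak n\cap\widehat{\omega}^k$, the embedding is tautological. But identifying the associated graded with $L$ is then equivalent to $D_k(N,\F)=\gamma_k^{[0]}(N)$, because $g-1\in\widehat{\omega}^k$ if and only if $\log g\in\widehat{\omega}^k$.
\item If $\mathfrak n$ is filtered by its lower central series, then $\gr\mathfrak n\cong L$ is Malcev theory. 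But the embedding requires $\mathfrak n\cap\widehat{\omega}^k=\gamma_k(\mathfrak n)$, which you have not addressed.
\end{itemize}

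One way to close this, with $\mathfrak n_\F=\mathfrak n\otimes_\Q\F$:
\begin{itemize}
\item By Baker--Campbell--Hausdorff, $g\mapsto\exp(\log g)$ is a group homomorphism. It gives an algebra map $\F N\to\widehat{\ur}(\mathfrak n_\F)$ that is filtered for the augmentation filtrations.
\item Hence $g\in D_k(N,\F)$ forces $\log g\in\mathfrak n_\F\cap(\ur(\mathfrak n_\F)^+)^k$.
\item This intersection equals $\gamma_k(\mathfrak n_\F)$. That follows from the filtered PBW theorem $\gr\ur(\mathfrak n_\F)\cong\ur(\gr\mathfrak n_\F)$ for the lower central series filtration, which coincides with the augmentation filtration on $\ur(\mathfrak n_\F)$.
\item Finally, $\log N\cap\gamma_k(\mathfrak n_\F)=\log\gamma_k^{[0]}(N)$ by the Malcev correspondence.
\end{itemize}
These facts are the actual content of the $p=0$ case and must be proved or cited. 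No isomorphism $\widehat{\F N}\cong\widehat{\ur}(\mathfrak n_\F)$ is needed.
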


{ We will say that the group $G$ is \emph{residually $[p]$} if 
$$\cap_n\gamma_n^{[p]}(G)=1.$$
If $p$ is a prime, this is equivalent to the following: 
for every element $1\neq g\in G$ there is a normal subgroup $N\trianglelefteq G$ such that $g\not\in N$ and $G/N$ is a nilpotent $p$-group of bounded exponent. If moreover $G$ is finitely generated, this means that $G$ is residually $p$-finite. If $p=0$, one verifies easily that all the quotient groups   $G/\gamma_n^{[0]}(G)$ are torsion free nilpotent group and deduce that being residually $[0]$ is the same as saying that for each element $1\neq g\in G$ there is a normal subgroup $N\trianglelefteq G$ with $G/N$ torsion free nilpotent such that $g\not\in N$.
These groups are usually called \emph{residually torsion free nilpotent}, but we keep the name \emph{residually $[0]$} to be able to treat at the same time the cases  $p=0$ and $p$  a prime. 
Being residually $[p]$ is the group theoretical analog of  separability for the group algebra because of the following consequence of a Theorem by Lichtman (cf. \cite[Theorem 2.1]{Lichtman2}):

\begin{proposition}
\label{prop:resp}
Let  $p$ be either zero or a prime and $\F$ a field of characteristic $p$. A group $G$ is residually $[p]$ if, and only if, $\F G$ is separated with respect to the augmentation filtration. Moreover, in this case $G$ is residually $p$-finite provided $G/\gamma_2^{[p]}(G)$ is finite.
\end{proposition}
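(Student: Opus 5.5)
We prove Proposition~\ref{prop:resp} by reducing it to the Quillen--Jennings--Lazard theorem together with Lichtman's intersection theorem \cite[Theorem 2.1]{Lichtman2}. The first step is to rewrite separatedness of $\F G$ in group-theoretic language. Write $\omega=\omega^{\F}_G$. If $\bigcap_n\omega^n=0$ and $g\in\bigcap_n\gamma_n^{[p]}(G)$, then by Quillen--Jennings--Lazard $g\in D_n(G,\F)$ for every $n$. Hence $g-1\in\bigcap_n\omega^n=0$, so $g=1$. This gives the implication ``separated $\Rightarrow$ residually $[p]$'' directly.

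For the converse we use Lichtman's theorem. It states that $\bigcap_n\omega^n=0$ holds exactly when $G$ is residually in a suitable class of groups. For $\chrs\F=0$ that class is the torsion free nilpotent groups. For $\chrs\F=p>0$ it is the nilpotent $p$-groups of bounded exponent, i.e.\ the groups with $\gamma_n^{[p]}=1$ for some $n$. We match this with the characterisation of residually $[p]$ given just before the proposition. In either case $G$ is residually $[p]$ if and only if every $1\neq g$ survives in some quotient $G/\gamma_n^{[p]}(G)$. That quotient is torsion free nilpotent when $p=0$, and nilpotent of exponent dividing a power of $p$ when $p$ is prime, since $\gamma_n^{[p]}(G)\supseteq G^{p^s}$ whenever $p^s\geq n$. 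Conversely, every quotient in Lichtman's class is a quotient of some $G/\gamma_n^{[p]}(G)$. So the two residual conditions agree, and Lichtman's theorem yields separatedness.

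The main obstacle is exactly this matching of Lichtman's hypothesis with the series $\gamma_n^{[p]}$. Its most delicate instance is the case $p=0$: we must show that $G/N$ torsion free nilpotent of class $c$ forces $\gamma^{[0]}_{c+1}(G)\le N$. This holds because $\gamma_{c+1}(G)\le N$, so if $x^m\in\gamma_{c+1}(G)$ then $xN$ has finite order in $G/N$, and torsion freeness gives $x\in N$. I expect the remaining bookkeeping for the prime case to be routine.

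For the last assertion, assume $G$ is residually $[p]$ with $p$ prime and $G/\gamma_2^{[p]}(G)$ finite. Each factor $\gamma_n^{[p]}(G)/\gamma_{n+1}^{[p]}(G)$ is an elementary abelian $p$-group. It is spanned by images of commutators of length $j$ in elements of $G/\gamma_2^{[p]}(G)$ and of $p$-th powers from earlier layers, hence by induction it is finite. Therefore each $G/\gamma_n^{[p]}(G)$ is a finite $p$-group. Since $\bigcap_n\gamma_n^{[p]}(G)=1$, $G$ is residually $p$-finite.
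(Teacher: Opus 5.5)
Your proof is correct, and for the equivalence it is essentially the paper's argument. The direction ``separated $\Rightarrow$ residually $[p]$'' is identical, via Quillen--Jennings--Lazard, and the converse again rests on Lichtman's theorem.

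The differences are in the details.

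\emph{Passing to $\F G$.} The paper applies Lichtman's theorem to $\Z G$ and then reaches $\F G$ by ``tensoring with $\F$''. Taken literally, that step needs more justification, since $\bigcap_n$ does not commute with $-\otimes_{\Z}\F$. For instance, for $G=\Z/2$ one has $\bigcap_n(\omega^{\Z}_G)^n=0$, while $\omega^{\Q}_G$ is idempotent. So the residually $[p]$ hypothesis has to be used again over $\F$. You sidestep this by applying the field-coefficient form of the criterion directly and checking that each $G/\gamma_n^{[p]}(G)$ lies in the relevant class.

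\emph{Where the work actually lies.} The step you single out as delicate is that a torsion free nilpotent quotient $G/N$ of class $c$ forces $\gamma^{[0]}_{c+1}(G)\le N$. That direction is not needed. The one you do need, and only assert, is that $G/\gamma_n^{[0]}(G)$ is torsion free nilpotent. This holds because the torsion elements of the nilpotent group $G/\gamma_n(G)$ form a subgroup, and the paper also asserts it just before the proposition.

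\emph{The final clause.} The paper's proof does not address the last clause at all, whereas you prove it. The layers $\gamma_n^{[p]}(G)/\gamma_{n+1}^{[p]}(G)$ are elementary abelian. Since $\gr^{[p]}_\bullet(G)$ is generated in degree one as a restricted Lie algebra, which is standard, they are finite by induction. Hence each $G/\gamma_n^{[p]}(G)$ is a finite $p$-group, and residual $p$-finiteness follows from $\bigcap_n\gamma_n^{[p]}(G)=1$.
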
 
\begin{proof} Assume that $\F G$ is separated with respect to the augmentation filtration, i.e., $\cap_{n\geq 0}\omega_G^n=0$. Let $g\in\cap_{n\geq 1}\gamma_n^{[p]}(G)$. As $\gamma_n^{[p]}(G)=D_n(G,\F)$, then $g-1\in\cap_{n\geq 0}\omega_G^n=0$. 

For the converse, let $\omega_G^\Z$ be the augmentation ideal of $\Z G$. We assume that $G$ is residually $[p]$, this means that $G$ is either  residually torsion free nilpotent (if $p=0$) or that for the prime $p$, $G$ is residually a nilpotent $p$-group of bounded exponent. Then Theorem 2.1 of \cite{Lichtman2} implies that $\cap_{n\geq 0}(\omega_G^\Z)^n=0$ and tensoring with the field $\F$ we obtain the result.
\end{proof}
}

The series $\gamma_n^{[p]}(G)$ is an $N$-series in Lazards' terminology, meaning that 
$$[\gamma^{[p]}_n(G),\gamma^{[p]}_m(G)]\leq \gamma^{[p]}_{n+m}(G).$$
This has the consequence that one can use the group commutator to define a Lie algebra

$$\F\gr^{[p]}_\bullet(G)=\bigoplus_{i\geq 1}\gamma_i^{[0]}(G)/\gamma_{i+1}^{[0]}(G)\otimes_\Z\F.
$$
To simplify notation, we will just write $\gr^{[p]}_\bullet(G)$ if the coefficient field is either $\Q$ or the field of $p$ elements $\F_p$.
In the case when $p$ is a prime, the algebra $\gr^{[p]}_\bullet(G)$ has additional structure: it is a $p$-restricted Lie algebra, meaning that there is a map  
$a\mapsto a^{[p]}$ such that for elements $a,b$ and $t\in\F$
\begin{itemize}
\item[(i)] $[a^{[p]},b]=[a,\buildrel{p}\over\ldots,a,b]$,

\item[(ii)] $(ta)^{[p]}=t^pa^{[p]}$,

\item[(iii)] $(a+b)^{[p]}=a^{[p]}+b^{[p]}+\sum_{i=1}^{p-1}s_i(a,b)$ where $s_i(a,b)$ is the coefficient of $t$ in $[ta+b,\buildrel{p-1}\over\ldots,ta+b,a]$.
\end{itemize}
This algebra was first introduced by Zassenhaus in 1939 so it is often called the 
\emph{restricted $p$-Zassenhaus Lie algebra} of the group $G$.

The $p$-Stallings central series is also an $N$-series and one can define a Lie algebra using the succesive quotients as we did for the $p$-Zassenhaus series. However, that Lie algebra has not necessarily the structure of a $p$-restricted Lie algebra in general.
Later we will make use of the following relation between the $p$-Zassenhaus and the $p$-Stallings central series:

\begin{lemma}\label{lem:ZassenhausStallings} Let $p$ be a prime. Then
$$\gamma_n^{[p]}(G)=\prod_{jp^s\geq n}\gamma_{j,p}(G)^{p^s}.$$
\end{lemma}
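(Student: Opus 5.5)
We prove the two inclusions separately, using only the definitions
$$\gamma_n^{[p]}(G)=\prod_{jp^s\geq n}\gamma_j(G)^{p^s},\qquad \gamma_{j,p}(G)=\prod_{i+t\geq j}\gamma_i(G)^{p^t}.$$
Put $R_n=\prod_{jp^s\geq n}\gamma_{j,p}(G)^{p^s}$; the claim is $\gamma_n^{[p]}(G)=R_n$.

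For $\gamma_n^{[p]}(G)\subseteq R_n$, note that $\gamma_j(G)\subseteq\gamma_{j,p}(G)$, taking $i=j$, $t=0$. Hence each generating factor $\gamma_j(G)^{p^s}$ of $\gamma_n^{[p]}(G)$ with $jp^s\geq n$ lies in $\gamma_{j,p}(G)^{p^s}$, which is a factor of $R_n$.

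For the reverse inclusion, fix $j,s$ with $jp^s\geq n$; we must show $\gamma_{j,p}(G)^{p^s}\subseteq\gamma_n^{[p]}(G)$. Since $\gamma_n^{[p]}(G)$ is a subgroup, it suffices to show that $x^{p^s}\in\gamma_n^{[p]}(G)$ for every $x\in\gamma_{j,p}(G)$. Here we use that the $p$-Zassenhaus series is a restricted $N$-series: it is a central series with $[\gamma_a^{[p]},\gamma_b^{[p]}]\leq\gamma_{a+b}^{[p]}$ and $(\gamma_a^{[p]})^p\leq\gamma_{pa}^{[p]}$. The first property was recalled above. The second follows from Hall's collection formula, and can also be read off from the characterization $\gamma_a^{[p]}=D_a(G,\F_p)$, since $g-1\in\omega^a$ implies $g^p-1=(g-1)^p\in\omega^{pa}$ in characteristic $p$. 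Iterating the second property gives $(\gamma_a^{[p]})^{p^s}\leq\gamma_{ap^s}^{[p]}$. It therefore suffices to prove $\gamma_{j,p}(G)\subseteq\gamma_j^{[p]}(G)$, because then
$$x^{p^s}\in(\gamma_j^{[p]})^{p^s}\subseteq\gamma_{jp^s}^{[p]}\subseteq\gamma_n^{[p]}.$$

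To see $\gamma_{j,p}(G)\subseteq\gamma_j^{[p]}(G)$, take a generating factor $\gamma_i(G)^{p^t}$ with $i+t\geq j$. Then $ip^t\geq i(1+t)\geq i+t\geq j$, so $\gamma_i(G)^{p^t}$ is one of the factors of $\gamma_j^{[p]}(G)$. This completes the second inclusion, and the two inclusions together give the lemma.

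The main obstacle is the power property $x\in\gamma_a^{[p]}\Rightarrow x^p\in\gamma_{pa}^{[p]}$ for arbitrary elements $x$, not only for generators. This matters because the $p^s$-th power of a product is not the product of the $p^s$-th powers. We rely on the dimension subgroup description (Quillen–Jennings–Lazard) to handle this cleanly rather than on a commutator collection argument.
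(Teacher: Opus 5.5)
Your proof is correct, but it is organized differently from the paper's. The paper first writes $\gamma_n^{[p]}(G)=\prod_{1\le r\le n}\gamma_r(G)^{p^{i_r}}$ with $i_r=\lceil\log_p(n/r)\rceil$, and aims for the analogous expression $\prod_{1\le r\le n}\gamma_{r,p}(G)^{p^{i_r}}$. It then compares, for each $r$, the exponent with which $\gamma_r(G)$ ``occurs'' on the two sides. Inside $\gamma_{m,p}(G)^{p^{i_m}}$ it occurs as $(\gamma_r(G)^{p^{m-r}})^{p^{i_m}}$, so one needs $p^{m-r+i_m}\ge n/r$, which reduces to $2^{m-r}r\ge m$. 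This is the same arithmetic fact as your $ip^t\ge i+t$.

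That term-by-term bookkeeping tacitly makes two identifications. It treats the $p^{i_m}$-th power of an element of $\gamma_{m,p}(G)$, which is a product of elements of various $\gamma_r(G)^{p^t}$, as the product of the $p^{i_m}$-th powers of those factors. It also identifies $(\gamma_r(G)^{p^t})^{p^s}$ with $\gamma_r(G)^{p^{t+s}}$. Since $p$-th powers do not distribute over products, a collection (Hall--Petrescu) argument is needed to make that step complete.

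Your route avoids the issue. You prove $\gamma_{j,p}(G)\subseteq\gamma_j^{[p]}(G)$ only at the level of generating subgroups, where no powers of products arise. You then handle an arbitrary $x\in\gamma_{j,p}(G)$ through the power property $x\in\gamma_a^{[p]}(G)\Rightarrow x^p\in\gamma_{pa}^{[p]}(G)$. You correctly derive this from $\gamma_a^{[p]}(G)=D_a(G,\F_p)$ together with $(x-1)^p=x^p-1$ in characteristic $p$.

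The cost is an appeal to the Jennings--Lazard--Quillen theorem. It is stated in the paper before the lemma and does not depend on it, so there is no circularity. The gain is a complete element-level argument. The paper's approach looks more elementary, but it leaves unaddressed exactly the point you identified as the main obstacle.
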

\begin{proof} Taking into account that for $i\leq j$ one has  $\gamma_r(G)^{p^i}\geq\gamma_r(G)^{p^j}$,  we conclude that
by definition of $\gamma_n^{[p]}(G)$ we have
\begin{equation}
    \label{eq:Jen}
\gamma_n^{[p]}(G)=\prod_{1\leq r\leq n}\gamma_r(G)^{p^{i_r}}
\end{equation}
where $i_r=\lceil\log_p{\big
(\frac{n}{r}\big)}\rceil$, i.e., $i_r$ is the smallest integer such that 
$$p^{i_r}\geq \frac{n}{r}.$$
Similarly, if we show
\begin{equation}\label{eq:ZS}
\gamma_n^{[p]}(G)=\prod_{1\leq r\leq n}\gamma_{r,p}(G)^{p^{i_r}}
\end{equation}
we will get the desired result. In the left hand side of (\ref{eq:ZS}), the factor $\gamma_r(G)$ 
appears with exponent $p^{i_r}$ and in the right hand side one has
$$\prod_{r\leq j\leq n}(\gamma_r(G)^{n-j})^{p^{i_{n-j+r}}}.$$
Hence in order to prove (\ref{eq:ZS}) one needs to verify that for $1\leq r\leq j\leq n$ one has $n-j+i_{n-j+r}\geq i_r$, or equivalently, that
\begin{equation}\label{eq:ZS2}
p^{n-j+i_{n-j+r}}\geq\frac{n}{r}.
\end{equation}
As
$$p^{n-j+i_{n-j+r}}=p^{n-j}p^{i_{n-j+r}}\geq 2^{n-j}\frac{n}{n-j+r}$$
it is enough to check that for $1\leq r\leq j\leq n$
$$2^{n-j}r\geq n-j+r$$
and this follows easily by induction.
\end{proof}

Quillen \cite[Theorem 1]{Quillen} has shown that there is an isomorphism 
which we will call the \emph{Quillen map}
\begin{equation}\label{eq:Quillen}Q_G:\ur(\gr^{[p]}_\bullet(G))\to\gr_\bullet \F G.\end{equation}
Here 
$$\gr_\bullet \F G=\oplus \omega_G^i/\omega^{i+1}_G,$$
is the graded algebra obtained from the augmentation filtration of $\F G$, and $\ur$ denotes the universal enveloping algebra functor. This isomorphism is induced by the map 
$$g\gamma_i(G)\mapsto g-1+\omega_G^i$$
for $g\in\gamma_{i-1}(G)$.


\begin{definition} Let $p$ be either zero or a prime and let $H\leq G$ be a subgroup of the group $G$. We say that $H$ is \emph{strictly $p$-embedded} in $G$ if for all $i\geq 1$
$$H\cap\gamma_i^{[p]}(G)=\gamma_i^{[p]}(H).$$
\end{definition}

The most obvious examples of strictly embedded subgroups are retracts: recall that $H$ is a retract of a group $G$ if $H\leq G$ and there is an epimorphism $\pi:G\to H$ that restricts to the identity in $H$. Then, taking the kernel $K$ of $\pi$, we see that $G$ decomposes as a semidirect product $G=H\ltimes K$. 
It is well known (see for example \cite[Lemma 1.5]{AF}) that retracts behave well with respect to central filtrations and as a consequence are strictly p-embedded for any $p$. In fact, for any $p$ and any $n\geq 1$
\begin{equation}\label{eq:retract}\gamma_n^{[p]}(G)=\gamma_n^{[p]}(H)(\gamma_n^{[p]}(G)\cap K).\end{equation}

\begin{proposition}\label{prop:strictly-embedded}
    Let $p$ be either zero or a prime, $\F$ a field of characteristic $p$ and $H$ a subgroup of the group $G$. Assume that $G$ is residually $[p]$. Then the following are equivalent.
    \begin{itemize}
        \item[\textup{(i)}] $H$ is strictly  $p$-embedded in $G$.
        \item[\textup{(ii)}] The inclusion map $\F[H]\to \F[G]$ is filtered free (for the augmentation filtration).
    \end{itemize}
\end{proposition}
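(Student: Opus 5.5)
We pass to associated graded objects and use the Quillen map \eqref{eq:Quillen} together with Lemma~\ref{lemma_bases}. Since $G$ is residually $[p]$, Proposition~\ref{prop:resp} shows that $\F G$ is separated for the augmentation filtration. Moreover $\F H$, with the filtration induced from $\F G$ (or with its own augmentation filtration, where needed), is separated as well. Indeed, $H\le G$ gives $\gamma_n^{[p]}(H)\le \gamma_n^{[p]}(G)$, so $H$ is residually $[p]$ too. Thus the criterion of Lemma~\ref{lemma_bases} applies: the inclusion $\F H\to\F G$ is filtered free if and only if $\gr_\bullet\F G$ is graded free over $\gr_\bullet\F H$ on the image of a suitable family.

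\emph{(ii)$\Rightarrow$(i).} If the inclusion is filtered free, then by the Remark after the definition of filtered free morphisms it is strict, i.e.\ $\F H\cap\omega_G^n=\omega_H^n$ for all $n$. Let $h\in H\cap\gamma_n^{[p]}(G)$. By the Quillen--Jennings--Lazard theorem, $h-1\in\omega_G^n\cap\F H=\omega_H^n$. Hence $h\in D_n(H,\F)=\gamma_n^{[p]}(H)$. The reverse inclusion is trivial.

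\emph{(i)$\Rightarrow$(ii).} First, (i) says precisely that the induced map $\gr^{[p]}_\bullet(H)\to\gr^{[p]}_\bullet(G)$ is injective. Indeed, its kernel in degree $n$ is $(H\cap\gamma_{n+1}^{[p]}(G))\gamma_{n+1}^{[p]}(H)/\gamma_{n+1}^{[p]}(H)$ restricted to $\gamma_n^{[p]}(H)$, and this vanishes by (i). Next, by PBW, the universal enveloping algebra (restricted when $p>0$) of a Lie algebra is free as a module over the enveloping algebra of any subalgebra. Concretely, extend a homogeneous ordered basis of $L_H=\gr^{[p]}_\bullet(H)$ to one of $L_G=\gr^{[p]}_\bullet(G)$; the PBW monomials in the complementary basis elements, with exponents $<p$ in the restricted case, form a homogeneous basis of $\ur(L_G)$ over $\ur(L_H)$. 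Via the Quillen isomorphisms $Q_H,Q_G$, which are natural in the inclusion, this shows that $\gr_\bullet\F G$ is graded free over the image of $\gr_\bullet\F H$, with a basis containing $1$.

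Two points then remain. The first is to lift this homogeneous basis to elements of $\F G$. Each basis element of $L_G$ in degree $i$ is the class of some $g\in\gamma_i^{[p]}(G)$; take the corresponding products of elements $g-1$. The converse direction of Lemma~\ref{lemma_bases} then makes this family filtered free over $\F H$. Here we must handle the subtlety that Lemma~\ref{lemma_bases} uses the filtration of $\F H$ in which $\gr_\bullet\F H$ is computed. We need $\gr_\bullet\F H\to\gr_\bullet\F G$ to be injective, i.e.\ the inclusion to be strict with respect to the augmentation filtration of $\F H$. This follows because $\ur(L_H)\to\ur(L_G)$ is injective by PBW, which gives injectivity on associated graded. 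Injectivity on graded pieces, together with separatedness, yields $\F H\cap\omega_G^n=\omega_H^n$ by a standard leading-term argument.

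The second point, which I expect to be the main obstacle, is that the lifted family generates $\F G$ as an $\F H$-module. Filtered freeness in Lemma~\ref{lemma_bases} only asserts condition (i) of the definition and linear independence. For generation, a graded basis gives generation only up to completion, and $\F G$ is not complete. I would instead exhibit a set-theoretic basis directly. Choose a transversal $T$ of $H\backslash G$; then $\F G$ is free over $\F H$ on $T$. I would then compare it with the lifted PBW family: show that the family $\{t-1 : t\in T\}\cup\{1\}$ can be chosen so that its leading terms in $\gr_\bullet\F G$ form a homogeneous $\gr_\bullet\F H$-basis. This requires $T$ to be chosen compatibly with the filtration, picking coset representatives of maximal depth via (i) inductively on $n$. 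The converse part of Lemma~\ref{lemma_bases} then gives filtered freeness of this genuine basis.
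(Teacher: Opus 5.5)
You prove (ii)$\Rightarrow$(i) correctly, and more directly than the paper's chain of equivalences: filtered-freeness gives strictness $\F H\cap\omega_G^n=\omega_H^n$, and Jennings--Lazard--Quillen converts this into $H\cap\gamma_n^{[p]}(G)=\gamma_n^{[p]}(H)$.

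For (i)$\Rightarrow$(ii) you follow the paper (injectivity on $\gr^{[p]}_\bullet$, then PBW, then Quillen) up to graded freeness of $\gr_\bullet\F G$ over $\gr_\bullet\F H$. The paper then simply invokes the converse half of Lemma~\ref{lemma_bases}. You are right that this half only produces a filtered-free \emph{family}, not a generating one. However, your repair via a transversal cannot work. By Quillen's theorem, the leading term of $t-1$ is the image of the class of $t$ in $\gr^{[p]}_\bullet(G)$. So all the $\overline{t-1}$ lie in the image of the Lie algebra, and in general they cannot produce the PBW monomials of length $\geq 2$ that a $\gr_\bullet\F H$-basis requires. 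Already for $H=1$, $G=\langle x\rangle\cong\Z$, $p=0$, every $\overline{x^k-1}=k\,\overline{x-1}$ lies in degree $1$, so $\overline{(x-1)^2}$ is never reached.

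More seriously, this gap cannot be closed, because (i)$\Rightarrow$(ii) is false as stated. Take $p=0$, $G=\langle x\rangle\cong\Z$ and $H=\langle x^2\rangle$.
\begin{itemize}
\item Since $\gamma_n^{[0]}(G)=1$ for $n\geq 2$, $H$ is strictly $0$-embedded, and $G$ is residually $[0]$.
\item From $x^2-1=2(x-1)+(x-1)^2$ one gets $\overline{x^2-1}=2\,\overline{x-1}$, so $\gr_\bullet\Q H\to\gr_\bullet\Q G$ is an isomorphism of polynomial rings.
\item Suppose $\Omega\ni 1$ were a generating filtered-free family. The forward half of Lemma~\ref{lemma_bases} would make $\overline{\Omega}$ a homogeneous $\gr_\bullet\Q H$-basis of $\gr_\bullet\Q G$.
\item Since $\overline{1}$ alone already generates $\gr_\bullet\Q G$, this forces $\Omega=\{1\}$. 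That contradicts $\Q G=\Q H\oplus x\,\Q H$.
\end{itemize}
In fact $x=\sum_{k}\binom{1/2}{k}(x^2-1)^k$ lies only in the completion of $\Q H$, which is exactly the completion phenomenon you anticipated. For $p$ prime, $H=\langle x^q\rangle$ with $q\neq p$ a prime behaves the same way: $\overline{x^q-1}=q\,\overline{x-1}$, and $\F_pG$ has rank $q$ over $\F_pH$.

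So the paper's own proof has the same hole at its last step. The implication (i)$\Rightarrow$(ii) needs additional hypotheses, not a cleverer choice of lifts.
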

\begin{proof} Condition  (i) is equivalent to the induced map $\gr_\bullet^{[p]}(H) \to\gr_\bullet^{[p]}(G)$ being injective.
   By the Poincaré-Birkhoff-Witt Theorem and its $p$-restricted version (cf. \cite[Theorem 2.5.5.1]{Strade}) this is equivalent to  the map 
   $$\ur(\gr_\bullet^{[p]}(H)) \to\ur(\gr_\bullet^{[p]}(G))$$
   being graded free. Then using Quillen's Theorem (cf. \cite[Theorem 1]{Quillen}), we see that this happens if and only if  the map $\gr_\bullet(\F[H]) \to \gr_\bullet (\F[G])$ is graded free.
   Now, by Lemma \ref{lemma_bases} this is in turn equivalent to (ii).
\end{proof}

\section{Proof of Theorems \ref{thm:graded} and \ref{teo:strictly}}\label{sec:proofAB}

We begin with the case of a free amalgamated product, which is the statement of the next Theorem.

\begin{theorem}\label{thm:amalgamated} Let $p$ be either zero or a prime and $G_1$ and $G_2$  groups with a common subgroup $H$. Assume that  $H$ is  $p$-strictly embedded in $G_1$ and $G_2$. Then
$$\gr^{[p]}_\bullet(G)=\gr^{[p]}_\bullet(G_1)\ast_{\gr^{[p]}_\bullet(H)}\gr^{[p]}_\bullet(G_2)$$
where $G=G_1\ast_HG_2$.
\end{theorem}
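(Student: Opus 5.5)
The strategy is to pass to group algebras, use the machinery of Sections 2 and 3, and return to Lie algebras through the Quillen map and PBW. Let $\F$ be $\Q$ or $\F_p$.

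The key observation is that $\F G=\F G_1\ast_{\F H}\F G_2$ as algebras. Both sides satisfy the same universal property: a pair of algebra maps out of $\F G_i$ agreeing on $\F H$ is the same as a pair of group maps out of $G_i$ into the unit group agreeing on $H$, which is the same as a group map out of $G$.

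The first obstacle is that Proposition~\ref{prop:strictly-embedded} assumes $G_i$ is residually $[p]$, which is not part of the hypothesis. I would avoid it by working with the quotients $\F G_i/\omega_{G_i}^n$, or more directly, by arguing through the following chain of equivalences, none of which uses separation:
\begin{itemize}
\item strict $p$-embedding gives injectivity of $\gr^{[p]}_\bullet(H)\to\gr^{[p]}_\bullet(G_i)$;
\item by PBW this makes $\ur(\gr^{[p]}_\bullet H)\to\ur(\gr^{[p]}_\bullet G_i)$ graded free;
\item by Quillen's theorem this makes $\gr_\bullet\F H\to\gr_\bullet\F G_i$ graded free.
\end{itemize}

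Next I lift a homogeneous basis $\overline{\mathcal B_i}$ (containing $1$) to a set $\mathcal B_i\subset\F G_i$. I then check directly that $\mathcal B_i$ is a filtered-free $\F H$-basis modulo $\omega_{G_i}^n$ for every $n$. This is the converse direction of Lemma~\ref{lemma_bases}, run at each finite level, which sidesteps separation. If that bookkeeping is awkward, a cleaner route is to first reduce to the residually $[p]$ case by replacing $G_i$ with $G_i/\bigcap_n\gamma_n^{[p]}(G_i)$. I doubt that reduction is compatible with amalgamation, so I would go with the finite-level argument, treating each level via the normal form of Proposition~\ref{prop:normalform} applied to the free $\F H$-modules.

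With filtered freeness in hand, I build the standard filtration on $B=\F G$ (Lemma~\ref{lem:filtered-basis}). Proposition~\ref{prop:standardaugmentation} identifies it with the augmentation filtration. Its hypothesis (ii) holds because $\omega_G$ is generated by the elements $g-1$, and each $g$ is a product of elements of $G_1\cup G_2$, using $xy-1=(x-1)(y-1)+(x-1)+(y-1)$. Proposition~\ref{prop:filtered-graded} then gives
$$\gr_\bullet\F G\cong\gr_\bullet\F G_1\ast_{\gr_\bullet\F H}\gr_\bullet\F G_2.$$
This is the step I expect to carry the real content, and the standing separation hypothesis there is exactly the gap addressed above. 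I would re-prove Proposition~\ref{prop:filtered-graded} directly: basis elements with equal filtration values are compared modulo the next filtration step, so only finite levels are used.

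Finally I apply Quillen's isomorphism to every term, and use that $\ur$ turns amalgamated free products of (restricted) Lie algebras into amalgamated free products of algebras, being a left adjoint. This gives
$$\ur(\gr^{[p]}_\bullet G)\cong\ur\bigl(\gr^{[p]}_\bullet G_1\ast_{\gr^{[p]}_\bullet H}\gr^{[p]}_\bullet G_2\bigr),$$
compatibly with the natural map $L:=\gr^{[p]}_\bullet G_1\ast_{\gr^{[p]}_\bullet H}\gr^{[p]}_\bullet G_2\to\gr^{[p]}_\bullet G$. It remains to show this map of Lie algebras is an isomorphism.
\begin{itemize}
\item \emph{Surjectivity:} $\gr^{[p]}_\bullet G$ is generated in degree $1$ (with $p$-powers if $p>0$) by the images of $G_1$ and $G_2$.
\item \emph{Injectivity:} by PBW, $L\to\ur(L)$ is injective, and $\ur(L)\to\ur(\gr^{[p]}_\bullet G)$ is the isomorphism above. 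Hence the composite $L\to\gr^{[p]}_\bullet G\to\ur(\gr^{[p]}_\bullet G)$ is injective.
\end{itemize}
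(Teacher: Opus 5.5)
\textbf{Verdict.} Apart from one point, your outline is the paper's proof: the push-out $\F G=\F G_1\ast_{\F H}\F G_2$ via the adjunction, filtered freeness, the standard filtration, Proposition~\ref{prop:standardaugmentation}, Proposition~\ref{prop:filtered-graded}, Quillen and PBW. Your closing surjectivity/injectivity argument is fine, and more careful than the paper's appeal to full faithfulness of $\ur$. The gap is in how you handle factors that are not residually $[p]$.

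\textbf{Why the finite-level workaround fails.} Proposition~\ref{prop:normalform}, and hence the standard filtration of Lemma~\ref{lem:filtered-basis}, needs $\mathcal{B}_i$ to be an honest $\F H$-basis of $\F G_i$ containing $1$. Knowing that the lifts are ``filtered free modulo $\omega_{G_i}^n$ for every $n$'' does not give this.
\begin{itemize}
\item If $\bigcap_n\omega_{G_i}^n\neq 0$, such lifts can never span $\F G_i$. An element $\sum_b b\,a_b$ of infinite valuation forces every $a_b\in\bigcap_n\omega_H^n$. Concretely, for $p=0$, $H=1$, $G_1=\Z/2$ one has $\omega_{G_1}=\omega_{G_1}^2$ and $\gr_\bullet\Q G_1=\Q$, so the only lift is $\{1\}$.
\item Working genuinely at level $n$ does not help either. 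In general $\F G_i/\omega_{G_i}^n$ is not free over $\F H/\omega_H^n$. Moreover, the amalgamated product of these truncations is much larger than $\F G/\omega_G^n$. So there is no level-$n$ normal form from which to build the standard filtration or compare it with the augmentation filtration.
\end{itemize}
Separation is therefore not a technicality inside Proposition~\ref{prop:filtered-graded}: without it, the objects your argument manipulates do not exist. Note also that even in the separated case, arbitrary lifts of a homogeneous basis need not span. For $G_1=\Z=\langle x\rangle$ and $H=1$, the elements $(x-1)^k$ do not span $\F[x^{\pm1}]$. You should use the filtered-free basis that Proposition~\ref{prop:strictly-embedded} provides, not an arbitrary lift.

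\textbf{The missing idea.} It is precisely the reduction you dismissed, and it \emph{is} compatible with amalgamation because of strict embedding. Put $K_i=\bigcap_n\gamma_n^{[p]}(G_i)$. Then $H\cap K_1=\bigcap_n\gamma_n^{[p]}(H)=H\cap K_2$. Let $K$ be the normal closure of $K_1\cup K_2$ in $G$. Then:
\begin{itemize}
\item $G/K\cong(G_1/K_1)\ast_{H/H\cap K_1}(G_2/K_2)$;
\item the factors are residually $[p]$, and the image of $H$ stays strictly $p$-embedded in both;
\item since $K_i\le\gamma_n^{[p]}(G_i)\le\gamma_n^{[p]}(G)$ for all $n$, one gets $K\le\bigcap_n\gamma_n^{[p]}(G)$. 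Hence $\gr^{[p]}_\bullet(G)=\gr^{[p]}_\bullet(G/K)$, and likewise for the $G_i$ and $H$.
\end{itemize}
This is exactly how the paper finishes.

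If you prefer a finite-level argument, the correct version truncates the \emph{groups}, not the algebras $\F G_i$. Replace $G_i$ by $G_i/\gamma_n^{[p]}(G_i)$, which is compatible with amalgamation for the same reason. Then use that $\F G/\omega_G^n$ depends only on $G/\gamma_n^{[p]}(G)$, and that degrees $<n$ of an amalgamated graded Lie algebra depend only on degrees $<n$ of the factors.
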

\begin{proof}  We first show the theorem  in the case when $G_1$ and $G_2$ are residually $[p]$. Observe that as $H$ is strictly  $p$-embedded in both, this will imply that also $H$ is  residually $[p]$. Consider the functor $F$ from the category of groups to the category of rings given by $G\to \F G$. The functor sending a ring to its group of units is right adjoint to $F$. The fact that $F$ has a right adjoint implies that it preserves push-outs (cf. \cite[Chapter V Section 5]{MacLane}), and therefore for the free amalgamated product $G=G_1\ast_HG_2$ one has
$$\F G=\F G_1\ast_{\F H}\F G_2$$
 as associative algebras. We equip $\F G_1$, $\F G_2$ and $\F H$ with the augmentation filtrations. 
 By Proposition~\ref{prop:strictly-embedded} and the hypothesis on $H$, the maps $\F H\to \F G_1$ and $\F H\to \F G_2$ are filtered free. By the 
 Lemmata~\ref{lem:filtered-basis} and \ref{lem:normalform-filtered-strict} the augmentation filtrations of $\F G_1$ and $\F G_2$ induce a filtration (the standard filtration) on $\F G$ so that the maps $\F G_1\to\F G$ and $\F G_2\to \F G$ are strict. We claim that this induced filtration is precisely the augmentation filtration. To prove the claim, we only have to check the hypothesis of 
 Proposition~\ref{prop:standardaugmentation}, note that the augmentation map induced on $\F G$ by the push out and the augmentations of $\F G_1$ and $\F G_2$ is precisely the natural augmentation map of $\F G$. The corresponding augmentation ideal $\omega_G$ is generated by elements of the form $1-g$ for $g\in G$.
For any element $g \in G$, by the normal form theorem, $g=g_0g_1\dots g_k$ where $g_0,g_1,\ldots,g_k$ lie in $G_1$ or $G_2$ (alternatively). Therefore
\begin{align*}
    (1-g)=(1-g_{0})+(1-g_{1}\dots g_{k})-(1-g_{0})(1-g_{1}\dots g_{k})
\end{align*}
so arguing by induction on $k$ we deduce that  $\omega_G$ is generated by $\omega_{G_1}$ and $\omega_{G_2}$.
Then, by applying Proposition \ref{prop:filtered-graded}, we establish that the natural map of graded algebras
\begin{align*}
    \gr_\bullet(\F[G_1]) \ast_{\gr_\bullet(\F[H])}\gr_\bullet(\F[G_2]) \to 
    \gr_\bullet(\F[G])
\end{align*}
is an isomorphism. Using Quillen map we see that also
\begin{align*}
    \ur(\gr^{[p]}_\bullet(G_1)) \ast_{\ur(\gr^{[p]}_\bullet(H))}\ur(\gr^{[p]}_\bullet(G_2)) \to \ur(\gr^{[p]}_\bullet(G))
\end{align*}
is an isomorphism. Finally, taking into account that both the functor that sends a Lie algebra in zero characteristic to its universal enveloping algebra and the functor that sends a $p$-restricted Lie algebra where $p$ is a prime to its restricted universal enveloping are fully faithful, we get the same (but) for the Lie algebras.


Consider now the general case. 
 Let $K_i=\cap_n\gamma_n^{[p]}(G_i)$ for $i=1,2$. Then
$\gr^{[p]}_\bullet(G_i)=\gr^{[p]}_\bullet(G_i/K_i)$ and for $\cap_n\gamma_n^{[p]}(H)=H\cap K_1=H\cap K_2$, also $\gr^{[p]}_\bullet(H)=\gr^{[p]}_\bullet(H/H\cap K_1)$. Moreover, $G_1/K_1$ and $G_2/K_2$ are residually $[p]$. 
Let $K$ be the normal subgroup of $G$ generated by $K_1$ and $K_2$. Then $G/K=(G_1/K_1)\ast_{H/H\cap K_1}(G_2/K_2)$. For each $n\geq 1$, $K$ lies inside the normal subgroup of $G$ generated by $\gamma_n^{[p]}(G_1)$ and $\gamma_n^{[p]}(G_2)$ and as both lie inside $\gamma_n^{[p]}(G)$ we get  $K\leq \gamma_n^{[p]}(G)$ for any $n$ and therefore
$$\gr^{[p]}_\bullet(G)=\gr^{[p]}_\bullet(G/K).$$
So the residually $[p]$ case implies
$$gr^{[p]}_\bullet(G)=\gr^{[p]}_\bullet(G/K)=\gr^{[p]}_\bullet(G_1/K_1)\ast_{\gr^{[p]}_\bullet(H/H\cap K_1)}\gr^{[p]}_\bullet(G_2/K_2)$$
and from this we get the same for $G$ with respect to $G_1$, $G_2$ and $H$.
 \end{proof}

To prove the second part of Theorem \ref{thm:graded} we first need  a $p$-version of a theorem by Falk and Randell that we will obtain easily using a result of Bellingeri and Gervais. Let $p$ be either zero or a prime and let $G=H\ltimes K$ be a semi-direct product. We say that $G$ is an \emph{almost $p$-direct product} if the induced action of $H$ on $K/\gamma_2^{[p]}(K)$ is trivial. In the case when $p=0$, this is usually called just an \emph{almost direct product}. Consider now the Lie algebras $\gr^{[p]}_\bullet(H)$ and $\gr^{[p]}_\bullet(K)$ which are generated by their  components of degree one. Take $\mathfrak{a}\in\gr^{[p]}_1(H)$ and $\mathfrak{b}\in\gr^{[p]}_1(K)$ and choose $h\in H$, $k\in K$ such that $\mathfrak{a}=h\gamma_2^{[p]}(H)$ and $\mathfrak{b}=k\gamma_2^{[p]}(K)$. Then
$[k,h]=k^{-1}k^h\in\gamma_2^{[p]}(K)$ so
$$\mathfrak{b}\mapsto[\mathfrak{a},\mathfrak{b}]=[k,h]\gamma_3^{[p]}\in\gr^{[p]}_\bullet(K)$$
defines a derivation $d$ on $\gr^{[p]}_\bullet(K)$. Using this derivation we may define the  semidirect product of Lie algebras
$$\gr^{[p]}_\bullet(H)\ltimes_d\gr^{[p]}_\bullet(K).$$

\begin{lemma}\label{lem:FRp} Let $G=H\ltimes K$ be 
an almost $p$-direct product. Then for each $n$, $\gamma_n^{[p]}(G)=\gamma_n^{[p]}(H)\ltimes\gamma_n^{[p]}(K)$ and, as a consequence, both $H$ and $K$ are strictly $p$-embedded in $G$ and
$$\gr^{[p]}_\bullet(G)=\gr^{[p]}_\bullet(H)\ltimes_d\gr^{[p]}_\bullet(K).$$
\end{lemma}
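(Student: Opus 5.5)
The core claim is the equality $\gamma_n^{[p]}(G)=\gamma_n^{[p]}(H)\ltimes\gamma_n^{[p]}(K)$ for every $n$; everything else follows formally from it. For $p=0$ this is the theorem of Falk and Randell. For $p$ a prime, I would reduce to the statement of Bellingeri and Gervais for the $p$-Stallings series. Their result says that for an almost $p$-direct product one has $\gamma_{n,p}(G)=\gamma_{n,p}(H)\ltimes\gamma_{n,p}(K)$. If one prefers to argue directly, the argument is the standard one: triviality of the action on $K/\gamma_2^{[p]}(K)$ gives $[K,H]\le\gamma_{2,p}(K)$, and an induction via the three subgroup lemma gives $[\gamma_{j,p}(K),H]\le\gamma_{j+1,p}(K)$. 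From this, the product $\gamma_{n,p}(H)\gamma_{n,p}(K)$ is normal and contains $[G,\gamma_{n-1,p}(G)]\gamma_{n-1,p}(G)^p$.

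Next I pass from Stallings to Zassenhaus using Lemma~\ref{lem:ZassenhausStallings}:
\[
\gamma_n^{[p]}(G)=\prod_{jp^s\ge n}\gamma_{j,p}(G)^{p^s}=\prod_{jp^s\ge n}\bigl(\gamma_{j,p}(H)\gamma_{j,p}(K)\bigr)^{p^s}.
\]
The inclusion $\supseteq$ is clear. For $\subseteq$, I need that $(\gamma_{j,p}(H)\gamma_{j,p}(K))^{p^s}$ lies in $\gamma_n^{[p]}(H)\gamma_n^{[p]}(K)$ whenever $jp^s\ge n$. Write $(hk)^{p^s}=h^{p^s}k'$. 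Here $k'$ is a product of conjugates of $k$ by powers of $h$, namely $k^{h^{a}}$, each congruent to $k$ modulo $[\gamma_{j,p}(K),H]\le\gamma_{j+1,p}(K)$. One then expands this with the Hall–Petresco formula and the fact that the $p$-Zassenhaus series of $K$ is restricted. This is the step I expect to be the main obstacle.

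Rather than manipulating powers directly, I would try the following alternative first. Show that $N_n:=\gamma_n^{[p]}(H)\gamma_n^{[p]}(K)$ is a normal subgroup of $G$, and that $(N_n)_n$ is an $N$-series with the restriction property $N_n^p\le N_{np}$. This uses $[\gamma_n^{[p]}(K),H]\le\gamma_{n+1}^{[p]}(K)$, proved by induction as for the Stallings series. Minimality of the $p$-Zassenhaus series among such series (Lazard) then gives $\gamma_n^{[p]}(G)\le N_n$, while $N_n\le\gamma_n^{[p]}(G)$ holds trivially. For $p=0$ the same minimality argument works with isolated series.

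Given the equality, intersecting with $H$ and with $K$ gives $H\cap\gamma_n^{[p]}(G)=\gamma_n^{[p]}(H)$ and $K\cap\gamma_n^{[p]}(G)=\gamma_n^{[p]}(K)$, since $H\cap K=1$. So both $H$ and $K$ are strictly $p$-embedded. The successive quotients then split as vector spaces:
\[
\gamma_n^{[p]}(G)/\gamma_{n+1}^{[p]}(G)\cong\gr_n^{[p]}(H)\oplus\gr_n^{[p]}(K).
\]
Here $\gr^{[p]}_\bullet(K)$ is an ideal and $\gr^{[p]}_\bullet(H)$ is a subalgebra. The bracket between the two pieces is given by group commutators $[k,h]$, which in degree one is exactly the derivation $d$ defined above. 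Since both algebras are generated in degree one and the extension of $d$ is forced by the Jacobi identity (and by compatibility with the $p$-map for $p$ prime), this identifies $\gr^{[p]}_\bullet(G)$ with $\gr^{[p]}_\bullet(H)\ltimes_d\gr^{[p]}_\bullet(K)$.
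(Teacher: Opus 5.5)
Your outline follows the paper: Falk--Randell for $p=0$, Bellingeri--Gervais for the $p$-Stallings series plus Lemma~\ref{lem:ZassenhausStallings} for $p$ prime, then the split-extension argument for $\gr^{[p]}_\bullet$. Your ``ideal plus complementary subalgebra, action fixed by $d$ on generators'' is equivalent to the paper's comparison of exact sequences via $\tau_1,\tau_2$. The step you flag is exactly the one the paper dispatches with ``from this and Lemma~\ref{lem:ZassenhausStallings} we deduce''. You are right that it is not formal: it amounts to $K\cap\gamma_n^{[p]}(G)=\gamma_n^{[p]}(K)$.

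Your estimate $k^{h^a}\equiv k$ modulo $\gamma_{j+1,p}(K)$ is too weak to close this step, because it ignores the degree of $h$. The direct route is nevertheless short if you use the Bellingeri--Gervais equality in all degrees. For $h\in\gamma_{j,p}(H)$ and $k\in\gamma_{j,p}(K)$, Hall--Petresco gives $(hk)^{p^s}=h^{p^s}k^{p^s}c$, where $c$ is a product of elements $c_i^{\pm\binom{p^s}{i}}$ with $2\le i\le p^s$ and $c_i\in\gamma_i(\langle h,k\rangle)$. The Stallings series is an $N$-series and $\langle h,k\rangle K/K$ is cyclic, so $c_i\in\gamma_{ij,p}(G)\cap K=\gamma_{ij,p}(K)$. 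Since $v_p\binom{p^s}{i}=s-v_p(i)$ and $i\ge p^{v_p(i)}$,
\[
c_i^{\binom{p^s}{i}}\in\gamma_{ij,p}(K)^{p^{s-v_p(i)}}\le\gamma^{[p]}_{ijp^{s-v_p(i)}}(K)\le\gamma^{[p]}_{jp^s}(K).
\]
Combine this with $h^{p^s}\in\gamma^{[p]}_{jp^s}(H)$, $k^{p^s}\in\gamma^{[p]}_{jp^s}(K)$ and the normality of $\gamma^{[p]}_n(K)$ in $G$. You get $\gamma_{j,p}(G)^{p^s}\le\gamma^{[p]}_n(H)\gamma^{[p]}_n(K)$ whenever $jp^s\ge n$.

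Your Lazard-minimality alternative is a genuinely different, self-contained route. It needs neither Bellingeri--Gervais nor Lemma~\ref{lem:ZassenhausStallings}. With uniqueness of roots in torsion-free nilpotent groups in place of Hall--Petresco, it also covers $p=0$ under the paper's hypothesis on $K/\gamma_2^{[0]}(K)$. That hypothesis is weaker than the Falk--Randell hypothesis (trivial action on $H_1(K;\mathbb{Z})$) that you and the paper cite.

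As sketched, though, it rests on more than $[\gamma_b^{[p]}(K),H]\le\gamma_{b+1}^{[p]}(K)$:
\begin{itemize}
\item Normality of $N_n$ already needs $[K,\gamma_n^{[p]}(H)]\le\gamma_n^{[p]}(K)$, and $[G,N_n]\le N_{n+1}$ needs $[K,\gamma_n^{[p]}(H)]\le\gamma_{n+1}^{[p]}(K)$.
\item $N_n^p\le N_{np}$ needs the full two-index bound $[\gamma_b^{[p]}(K),\gamma_a^{[p]}(H)]\le\gamma_{a+b}^{[p]}(K)$. You can obtain it by applying minimality a second time, to the series $\mathcal{A}_a=\{h\in H\mid[\gamma_b^{[p]}(K),h]\le\gamma_{a+b}^{[p]}(K)\text{ for all }b\}$ of $H$, using the three subgroup lemma for commutators and Hall--Petresco for $p$-th powers.
\item The restriction property then still needs a Hall--Petresco expansion of $(hk)^p$, now with $c_i\in\gamma_i(\langle h,k\rangle)\le N_{in}\cap K=\gamma^{[p]}_{in}(K)$.
\end{itemize}
So the alternative does not avoid the power computation; it only reduces it to the case $s=1$.
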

\begin{proof} In the case when $p=0$, this is just \cite[Theorem 3.1]{FalkRandell}. Assume that $p$ is a prime. Then the fact that $G$ is an almost $p$-direct product implies by 
\cite[Theorem 4]{BG} that for any $n\geq 1$, $\gamma_{n,p}(G)=\gamma_{n,p}(H)\ltimes\gamma_{n,p}(K)$. From this and Lemma
\ref{lem:ZassenhausStallings} we deduce that also $\gamma_n^{[p]}(G)=\gamma_n^{[p]}(H)\ltimes\gamma_n^{[p]}(K)$. 
Taking quotients this implies that $\gr^{[p]}_\bullet(K)$ is precisely the kernel of the epimorphism
$$\gr^{[p]}_\bullet(G)\to\gr^{[p]}_\bullet(H)$$
induced by the projection $G\to H$. Moreover, if we denote by $L_\bullet$ the semidirect product of Lie algebras 
$$L_\bullet=\gr^{[p]}_\bullet(H)\ltimes_d\gr^{[p]}_\bullet(K)$$
then from the relators in $G$ we see that the relations associated to the derivation $d$ hold true in $\gr^{[p]}_\bullet(G)$ so we have a well defined epimorphism $\tau:L_\bullet\to\gr^{[p]}_\bullet(G)$ that fits in a commutative diagram
$$\begin{tikzcd}
&\gr^{[p]}_\bullet(K)\arrow[r,hook]\arrow[d,"\tau_1"]&L_\bullet\arrow[r,two heads]\arrow[d,"\tau"]&\gr^{[p]}_\bullet(H)\arrow[d,"\tau_2"]\\
&\gr^{[p]}_\bullet(K)\arrow[r,hook]&\gr^{[p]}_\bullet(G)\arrow[r,two heads]&\gr^{[p]}_\bullet(H)\\
\end{tikzcd}$$

and as the maps $\tau_1$ and $\tau_2$ are isomorphisms, so is $\tau$.
\end{proof}

We extend the notion of almost $p$-direct products to HNN-entensions: Let $H\leq G_1$ be a subgroup and consider a map $\varphi:H\to H$ which is trivial on $H/\gamma_2^{[p]}$. Then we say that the HNN extension
$$G=G_1\ast_{H,\varphi}$$
is \emph{almost $p$-trivial}.
We are  now able to prove the second part of Theorem \ref{thm:graded}.

\begin{corollary}\label{cor:HNN} Let $H\leq G_1$ be a strictly $p$-embedded  subgroup and $\varphi:H\to H$ an automorphism that acts trivially on $H/\gamma^{[p]}_2(H)$. Let $$G=G_1\ast_{H,\varphi}.$$
Then
$$\gr_\bullet^{[p]}(G)=\gr_\bullet^{[p]}(G_1)\ast_{\gr_\bullet^{[p]}(H),d}$$
where $d$ is the derivation of $\gr_\bullet^{[p]}(H)$ induced by $\varphi$.
\end{corollary}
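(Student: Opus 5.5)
The plan is to reduce the HNN case to Theorem~\ref{thm:amalgamated} combined with Lemma~\ref{lem:FRp}, via the standard description of an HNN-extension as a semidirect product. Let $\pi\colon G\to\Z=\langle t\rangle$ be the map killing $G_1$ and sending $t\mapsto t$, and let $N=\ker\pi$. Bass--Serre theory identifies $N$ with an infinite amalgam
$$N=\cdots\ast_{H_{-1}}G_1^{(0)}\ast_{H_0}G_1^{(1)}\ast_{H_1}\cdots,\qquad G_1^{(k)}=t^{-k}G_1t^{k},$$
where consecutive copies are glued along $H$ using $\varphi$, and $G=\langle t\rangle\ltimes N$. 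Rather than work with $N$ directly, I would use the finite-level subgroups: for $m\ge 0$ let $N_m=G_1^{(-m)}\ast_H\cdots\ast_H G_1^{(m)}$, so that $N=\bigcup_m N_m$.

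First, by induction on $m$, I would show using Theorem~\ref{thm:amalgamated} and Theorem~\ref{teo:strictly}(first part, proved along with it) that each $G_1^{(k)}$ is strictly $p$-embedded in $N_m$. This also gives that the edge copies of $H$ are strictly $p$-embedded, since they are strictly embedded in a vertex group, which is in turn strictly embedded in $N_m$. Consequently $\gr^{[p]}_\bullet(N_m)$ is the iterated amalgam of copies of $\gr^{[p]}_\bullet(G_1)$ over $\gr^{[p]}_\bullet(H)$, glued via $\gr^{[p]}_\bullet(\varphi)$. The $p$-almost-triviality hypothesis (trivial on $H/\gamma^{[p]}_2(H)$) is what should make $\gr^{[p]}_\bullet(\varphi)$ manageable. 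Since $\gamma^{[p]}_n$ commutes with directed unions, taking the limit gives $\gr^{[p]}_\bullet(N)$.

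The key step is to avoid computing $\gr(N)$ and comparing it with a Lie semidirect product, because $t$ does not act trivially on $N^{ab}$ in general: it shifts the copies. I would therefore instead verify the universal property directly. Let $L=\gr^{[p]}_\bullet(G_1)\ast_{\gr^{[p]}_\bullet(H),d}$ be the Lie HNN-extension, with new degree-one generator $\bar t$ satisfying $[\bar t,h]=d(h)$. There is a natural surjection $\sigma\colon L\to\gr^{[p]}_\bullet(G)$. Surjectivity holds since $\gr^{[p]}_\bullet(G)$ is generated in degree one by the images of $G_1$ and $t$. The relation holds because for $h\in\gamma^{[p]}_k(H)$ one has $t^{-1}ht=\varphi(h)$, so $[h,t]=h^{-1}\varphi(h)$, which lies in $\gamma^{[p]}_{k+1}$ by the filtered version of almost triviality. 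Here I interpret $d$ as the graded map $h\gamma_{k+1}\mapsto h^{-1}\varphi(h)\gamma_{k+2}$.

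For injectivity I would argue at the level of group algebras, mimicking the proof of Theorem~\ref{thm:amalgamated}. First reduce to $G_1$ residually $[p]$, exactly as in that proof. Then $\F G$ is the algebra HNN-extension of $\F G_1$ over $\F H$. The plan is to establish an HNN normal form analogous to Proposition~\ref{prop:normalform}, using filtered free bases of $\F G_1$ over $\F H$ (from Proposition~\ref{prop:strictly-embedded}) and a basis in which powers of $t$ carry the filtration of $(t-1)$. I would then check, as in Proposition~\ref{prop:standardaugmentation}, that the standard filtration equals the augmentation filtration; the generation condition comes from $1-g$ expansions in $G_1$ and $t$. Next, Proposition~\ref{prop:filtered-graded} adapted to HNN extensions identifies $\gr\F G$ with the enveloping-algebra HNN extension of $\ur(\gr^{[p]} G_1)$ with $\bar t$ primitive. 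Finally, Quillen's isomorphism~(\ref{eq:Quillen}) and full faithfulness of $\ur$ finish the argument, exactly as before.

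The main obstacle is the filtered normal form. The difficulty is that the twist $t^{-1}ct=\varphi(c)$ for $c\in\F H$ must respect the filtration with the correct correction term $c(t-1)-(t-1)\varphi(c)=c-\varphi(c)$. This requires $c-\varphi(c)\in\omega_H^{\nu(c)+1}$, which is precisely the almost $p$-trivial hypothesis transported to $\F H$. I would first verify this inclusion on group elements, via $h-\varphi(h)=h(1-h^{-1}\varphi(h))$, and then extend it by linearity.
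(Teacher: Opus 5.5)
Your outline works, but it takes a much longer road than the paper. The paper builds no HNN version of the algebra machinery. Because $\varphi$ is an automorphism of $H$ itself (both associated subgroups are $H$), comparing presentations gives $G=G_1\ast_H G_2$ with $G_2=\langle t\rangle\ltimes_\varphi H$. The hypothesis on $\varphi$ says exactly that $G_2$ is an almost $p$-direct product. So Lemma~\ref{lem:FRp} shows that $H$ is strictly $p$-embedded in $G_2$ and that $\gr^{[p]}_\bullet(G_2)=\gr^{[p]}_\bullet(\langle t\rangle)\ltimes_d\gr^{[p]}_\bullet(H)$. Theorem~\ref{thm:amalgamated} then gives $\gr^{[p]}_\bullet(G)=\gr^{[p]}_\bullet(G_1)\ast_{\gr^{[p]}_\bullet(H)}\gr^{[p]}_\bullet(G_2)$, which is the Lie HNN extension after rewriting the presentation.

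Your injectivity argument is this same proof in disguise. $\F G$ is literally $\F G_1\ast_{\F H}\F G_2$, and the ``HNN normal form'' you want is Proposition~\ref{prop:normalform} for this amalgam. Your ``main obstacle'' is that $\varphi(c)-c\in\omega_H^{k+1}$ for $c\in\omega_H^k$, so that the twist respects the standard filtration. That is exactly what makes $\F H\to\F G_2$ filtered free, i.e.\ the part of Lemma~\ref{lem:FRp} actually needed here. Your route buys independence from the Falk--Randell/Bellingeri--Gervais input for this cyclic case. The cost is redoing the normal-form, filtration-comparison and graded-comparison steps by hand instead of quoting them. Your Bass--Serre paragraph on $N$ and the $N_m$ is never used afterwards and can be dropped.

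If you carry out your version, two steps need repair.

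\emph{The filtered inclusion.} $\varphi(c)-c\in\omega_H^{k+1}$ does not follow from the group-level statement by linearity. For $c=(h_1-1)(h_2-1)$ with $h_1,h_2,h_1h_2\notin\gamma_2^{[p]}(H)$, working termwise over the group elements $h_1h_2,h_1,h_2$ only gives $\omega_H^2$. Instead, apply $\varphi(ab)-ab=(\varphi(a)-a)\varphi(b)+a(\varphi(b)-b)$ inductively to the spanning products $(h_1-1)\cdots(h_k-1)$ of $\omega_H^k$. This uses only $\varphi(h)-h\in\omega_H^2$, which is the hypothesis itself.

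\emph{The basis for the $t$-part.} If you mean the family $\{(t-1)^k\}_{k\ge 0}$, it does not span $\F[t^{\pm1}]$, since $t^{-1}$ is only an infinite series in $t-1$. A choice such as $(t-1)^k t^{-\lfloor k/2\rfloor}$ works: each element has valuation exactly $k$, and the first $2m+1$ of them span the Laurent polynomials with exponents between $-m$ and $m$.
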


\begin{proof} Let $G_2=\langle t\rangle\ltimes H$, where $t$ is the stable letter and the $t$ action is given by $\varphi$. Then $G_2$ is an almost $p$-direct product. So $H$ is strictly embedded in $G_2$ by Lemma~\ref{lem:FRp}. Moreover
$$G=G_1\ast_HG_2$$
and as - by hypothesis - $H$ is strictly $p$-embedded in $G_1$, Theorem \ref{thm:amalgamated} implies that 
$$\gr_\bullet^{[p]}(G)=\gr_\bullet^{[p]}(G_1)\ast_{\gr_\bullet^{[p]}(H)}\gr_\bullet^{[p]}(G_2).$$
From this the obvious modification of the presentation of $\gr_\bullet^{[p]}(G)$ gives the desired result.
\end{proof}

The proof of Theorem \ref{thm:amalgamated} also implies Theorem \ref{teo:strictly}.

\medskip

\noindent\emph{Proof of Theorem \ref{teo:strictly}.} Consider first the case of a free amalgamated product $G=G_1\ast_HG_2$.
Assume first that $G_1$ and $G_2$ are residually $[p]$. The proof of Theorem \ref{thm:amalgamated} implies that
 $\F G$ is separated and that the embeddings $\F G_1\to\F G$ and $\F G_2\to\F G$ are strict for the augmentation filtrations. As $\F G$ is separated, $G$ is residually $[p]$. Since the embeddings $\F G_1\to\F G$ and $\F G_2\to\F G$ are strict, one deduces that $G_1$ and $G_2$ are strictly $p$-embedded in $G_1\ast_HG_2$. To see this, consider for example $g\in G_1\cap\gamma_n^{[p]}(G)$. Then 
$$1-g\in\F G_1\cap\omega_G^n=\omega_{G_1}^n$$
 so $g\in\gamma_n^{[p]}(G_1)$. 

 Next, we  have to show that $G_1$ and $G_2$ are strictly $p$-embedded in $G$ even if they are not residually $[p]$. Let  $K_i=\cap_n\gamma_n^{[p]}(G_i)$ for $i=1,2$ and $K$ the normal subgroup of $G$ generated by $K_1$ and $K_2$. As in the proof of Theorem \ref{thm:amalgamated}, we have $G/K=(G_1/K_1)\ast_{H/H\cap K_1}(G_2/K_2)$ and $G_1/K_1,G_2/K_2$ are residually $[p]$ so by the previous case, $G_1/K_1$ and $G/2/K_2$ are strictly $p$-embedded in $G/K$. From this and using the fact that for any $n$, $K_i\leq\gamma_n^{[p]}(G_i)$ for $i=1,2$ and  $K\leq\gamma_n^{[p]}(G)$ one easily deduces that $G_1$ and $G_2$ are also strictly $p$-embedded in $G$.
 
 For HNN extensions, one may argue as in Corollary \ref{cor:HNN}, taking into account that Lemma~\ref{lem:FRp} implies that if the HNN extension has edge group $H$ and stable letter $t$, the almost $p$-direct product $G_2=\langle t\rangle\ltimes H$ is residually $[p]$ if, and only if, $H$ is. 
 \qed

\subsection{Profinite groups}
\label{ss:prop}
For a profinite group $G$ and a prime number $p$ one defines the completed $\F_p$-group algebra of $G$ by
\begin{equation}
\label{eq:compFp}
    \F_p\dbl G\dbr =\varprojlim_{U\triangleleft_\circ G} \F_p G/U,
\end{equation}
where the inverse limit is running over all open normal subgroups of $G$. By construction, it is a compact $\F_p$-algebra. Indeed, the
assignment
\begin{equation}
    \label{eq:func1}
    \F_p\dbl\argu\dbr\colon \bool\longrightarrow\CVec
\end{equation}
is a functor from the category of \emph{boolean sets} (=profinite sets) to the category of compact $\F_p$-vector spaces $\CVec$, which is the
left adjoint of the forgetful functor $\mathbf{f}\colon\CVec\longrightarrow\bool$ and thus, commutes with inverse limits (cf. \cite[Chapter V Section 5]{MacLane}).
The $\F_p$-algebra $\F_p\dbl G\dbr$ comes equipped with a canonical augmentation
$\eps\colon\F_p\dbl G\dbr\to\F_p$. Put $\uomega_G=\kernel(\eps)$. Then
$F^m(\F_p\dbl G\dbr)=\uomega_G^m$, makes $\F_p\dbl G\dbr$ canonically a filtered compact $\F_p$-algebra. The associated graded $\F_p$-algebra
$\gr_\bullet\F_p\dbl G\dbr$ has homogeneous components $\gr_k\F_p\dbl G\dbr$, $k\geq 0$, consisting of compact $\F_p$-vector spaces. Similarly, the restricted Zassenhaus $\F_p$-Lie algebra $\gr_\bullet^{[p]}(G)$ is a graded restricted $\F_p$-Lie algebra with homogeneous components 
\begin{equation}
    \label{eq:prop1}
    \gr_k^{[p]}(G)=\uD_k(G,\F_p)/\uD_{k+1}(G,\F_p),
\end{equation}
consisting of compact $\F_p$-vector spaces, where
\begin{equation}
    \label{eq:prop2}
    \uD_k(G,\F_p)=G\cap (1+\uomega_G^k)=\cl(\gamma_k^{[p]}(G))
\end{equation}
(cf. Definition~\ref{def:gamma}).

\begin{fact} 
Let $G$ be a profinite group. Then $\FG$ is a separated compact $\F_p$-algebra if, and only if, $G$ is a pro-$p$ group.
\end{fact}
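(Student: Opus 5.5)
The plan is to prove the two implications separately, using that $\F_p\dbl G\dbr$ is compact by construction, so only separatedness is at stake. Throughout, for an open normal subgroup $U$ the projection $\pi_U\colon \F_p\dbl G\dbr\to \F_p[G/U]$ is augmented and maps $\uomega_G$ onto $\omega_{G/U}$. Hence $\pi_U(\uomega_G^m)=\omega_{G/U}^m$.

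\emph{Pro-$p$ implies separated.} Let $G$ be pro-$p$, so every $G/U$ is a finite $p$-group $P$. By the classical fact that $\F_p P$ is local with radical $\omega_P$, the augmentation ideal $\omega_P$ is nilpotent, say $\omega_P^{N}=0$. (Alternatively, $P$ being a finite $p$-group, $\gamma_N^{[p]}(P)=1$ for some $N$, and one can use Jennings' theorem and Quillen's isomorphism \eqref{eq:Quillen} to see $\gr_\bullet\F_pP$ vanishes in large degree.) Now take $x\in\bigcap_m\uomega_G^m$. Then $\pi_U(x)\in\omega_{G/U}^m=0$ for $m$ large, so $\pi_U(x)=0$ for all $U$. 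Since $\F_p\dbl G\dbr$ is the inverse limit \eqref{eq:compFp}, this gives $x=0$.

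\emph{Not pro-$p$ implies not separated.} Suppose some finite quotient $Q=G/U$ is not a $p$-group. Pick a prime $q\neq p$ dividing $|Q|$ and $g\in G$ whose image in $Q$ has order divisible by $q$. The closed procyclic subgroup $\overline{\langle g\rangle}$ is a product of its Sylow pro-$\ell$ subgroups. Its Sylow pro-$q$ subgroup $Z$ is therefore a nontrivial procyclic pro-$q$ group whose image in $Q$ is a nontrivial cyclic $q$-group. The finite quotients of $Z$ have $q$-power order, which is invertible in $\F_p$. So for each open subgroup $V\le Z$ I set
\[
e_V=\tfrac{1}{|Z/V|}\sum_{z\in Z/V}z\in\F_p[Z/V].
\]
These form a compatible system, because the sum over $Z/V'$ maps to $|V/V'|$ times the sum over $Z/V$. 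They therefore define an element $e\in\F_p\dbl Z\dbr$, and its image $e\in\F_p\dbl G\dbr$ under the map induced by the inclusion satisfies $e^2=e$ and $\eps(e)=1$. The element $f=1-e$ then lies in $\uomega_G$ and is idempotent, so $f=f^m\in\uomega_G^m$ for every $m$, i.e.\ $f\in\bigcap_m\uomega_G^m$.

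The remaining and main point is that $f\neq 0$, which is clean to check by computing $\pi_U(f)\in\F_pQ$. Let $C$ be the image of $Z$ in $Q$. Then $\pi_U(f)=1-\frac{1}{|C|}\sum_{c\in C}c$. Since $|C|>1$, this element has coefficient $1-|C|^{-1}$ at $1$ and $-|C|^{-1}\neq0$ at any $c\in C\setminus\{1\}$, so it is nonzero. Hence $\F_p\dbl G\dbr$ is not separated. The only care needed is to define $e$ via the inverse-limit system on $Z$ rather than appealing to idempotent lifting. This avoids any question of whether the ideals $\uomega_G^m$ are closed.
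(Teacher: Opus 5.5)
Your proof is correct. The forward direction matches the paper's: $\omega_{G/U}$ is nilpotent for a finite $p$-group $G/U$, and you then pass to the inverse limit. Your converse, however, takes a different route from the paper's.

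\textbf{The paper's converse.} It argues through dimension subgroups. If $G$ is not pro-$p$, some Sylow pro-$q$ subgroup $Q$ with $q\neq p$ is nontrivial. Every element of a pro-$q$ group is a $p^s$-th power, so $\uD_k(Q,\F_p)=Q$ for all $k$, and hence $Q\subseteq\bigcap_k\uD_k(G,\F_p)$. Concretely, for $1\neq x=y^{p^s}\in Q$, the nonzero element $x-1=(y-1)^{p^s}$ lies in $\uomega_G^{p^s}$, so $\bigcap_m\uomega_G^m\neq 0$.

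\textbf{Your converse.} You exploit a different consequence of $q\neq p$: the orders $|Z/V|$ are invertible in $\F_p$. This lets you build the averaging idempotent $e$ over a procyclic pro-$q$ subgroup $Z$. Then $1-e$ is a nonzero idempotent in $\uomega_G$, and therefore lies in every power $\uomega_G^m$.

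\textbf{What each buys.} The paper's version is shorter. Nonvanishing of $x-1$ is immediate, and it gives directly the group-theoretic statement $\bigcap_k\uD_k(G,\F_p)\neq\{1\}$. Your version needs the compatibility check on the $e_V$ and the explicit computation of $\pi_U(1-e)$. In exchange it is self-contained. It uses only the Sylow decomposition of a procyclic group rather than profinite Sylow theory. It also never invokes the identification $\uD_k(G,\F_p)=G\cap(1+\uomega_G^k)=\cl(\gamma_k^{[p]}(G))$. It even shows slightly more: $\bigcap_m\uomega_G^m$ contains a nonzero idempotent.

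Within your own setup, the Frobenius identity $x-1=(y-1)^{p^s}$, applied to any $1\neq x\in Z$, would have given a one-line alternative to the idempotent construction. Like your argument, it does not depend on whether the ideals $\uomega_G^m$ are closed.
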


\begin{proof}
If $\overline{G}$ is a finite $p$-group, then there exists $m\geq 0$ such that $\omega_{\overline{G}}^m=\{0\}$, showing that for a pro-$p$ group $G$, $\FG$ is  separated. Let $G$ be a profinite group and assume that $\FG$ is  separated. In particular, 
$\bigcap_{k\geq 1} \uD_k(G,\F_p)=\{1\}$.
Suppose that $G$ is not a pro-$p$ group. Then there exists a prime $q$ different from $p$, such that its pro-$q$ Sylow subgroup $Q\in\Syl_q(G)$ is non-trivial, i.e., 
$Q\not=\{1\}$. As $\uD_k(Q,\F_p)=\uD_{k+1}(G,\F_p)$ for all $k\geq 1$, one concludes that
$Q=\bigcap_{k\geq 1} \uD_k(Q,\F_p)\subseteq 
\bigcap_{k\geq 1} \uD_k(G,\F_p)$, a contradiction, showing that $G$ has to be pro-$p$.
\end{proof}

As for discrete groups we  sat that a closed subgroup $H$ of a pro-$p$ group $G$ is 
\emph{strictly $p$-embedded} if for all $k\geq 0$ one has $\uD_k(H,\F_p)=\uD_k(G,\F_p)\cap H$.
Obviously, the closed subgroup $H$ of the pro-$p$ group $G$ is strictly $p$-embedded if, and only if,
the mapping of restricted $\F_p$-Lie algebras
$\gr_\bullet^{[p]}(\iota)\colon\gr_\bullet^{[p]}(H)\to\gr_\bullet^{[p]}(G)$ is injective, where
$\iota\colon H\to G$ denotes the canonical inclusion.

\begin{proposition}
\label{prop:se-prop}
    Let $H$ be a closed subgroup of the finitely generated pro-$p$ group $G$. Then the following are equivalent.
    \begin{itemize}
        \item[\textup{(i)}] $H$ is strictly embedded in $G$.
        \item[\textup{(ii)}] The inclusion map $\F[H]\to \F[G]$ is filtered free (for the augmentation filtration).
    \end{itemize}
\end{proposition}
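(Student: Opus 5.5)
The plan is to copy the proof of Proposition~\ref{prop:strictly-embedded}, replacing the discrete ingredients by their pro-$p$ counterparts. Here $\F[H]$ and $\F[G]$ are read as the completed algebras $\F_p\dbl H\dbr$ and $\F_p\dbl G\dbr$ with the filtrations by powers of $\uomega_H$ and $\uomega_G$. By the Fact above, both algebras are separated, since $G$ and hence $H$ are pro-$p$. This separatedness is what is needed to apply Lemma~\ref{lemma_bases}. As noted just before the statement, (i) is equivalent to injectivity of $\gr^{[p]}_\bullet(\iota)\colon\gr^{[p]}_\bullet(H)\to\gr^{[p]}_\bullet(G)$.

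The chain of equivalences runs as follows. By the restricted Poincaré--Birkhoff--Witt theorem (cf. \cite[Theorem 2.5.5.1]{Strade}), injectivity of a map of restricted $\F_p$-Lie algebras is equivalent to the induced map of restricted enveloping algebras $\ur(\gr^{[p]}_\bullet(H))\to\ur(\gr^{[p]}_\bullet(G))$ being graded free. Choosing a homogeneous ordered basis of $\gr^{[p]}_\bullet(G)$ extending one of the image of $\gr^{[p]}_\bullet(H)$ gives restricted PBW monomials, which form a homogeneous basis. Next, the pro-$p$ analogue of Quillen's theorem identifies $\ur(\gr^{[p]}_\bullet(G))$ with $\gr_\bullet\F_p\dbl G\dbr$, naturally in $G$. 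Under this identification, graded freeness transfers to the map $\gr_\bullet\F_p\dbl H\dbr\to\gr_\bullet\F_p\dbl G\dbr$. Finally, Lemma~\ref{lemma_bases} converts graded freeness into filtered freeness of $\F_p\dbl H\dbr\to\F_p\dbl G\dbr$, and conversely.

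The main obstacle is the completed version of Quillen's isomorphism. I would derive it from the discrete one using finite generation. Because $G$ is a finitely generated pro-$p$ group, each $\uomega_G^k$ is open, and $\F_p\dbl G\dbr/\uomega_G^{k}\cong \F_p G/\omega_G^k$ for the abstract group $G$ (or for a dense finitely generated subgroup). Likewise, $\uD_k(G,\F_p)=\cl(\gamma_k^{[p]}(G))$ has finite index, and the discrete and profinite graded pieces agree in each degree. Hence the isomorphism $Q_G$ of \eqref{eq:Quillen} passes to the completed setting degree by degree.

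A second subtlety is that $H$ need not be finitely generated. To handle this, I would write $H$ as the inverse limit of its images $HU/U$ in the finite quotients $G/U$, and use that $\gr_k$ commutes with these limits on compact pieces, so that Quillen's theorem still holds for $H$. Alternatively, one can check directly that $\gr_\bullet\F_p\dbl H\dbr$ is generated in degree one and apply Jennings' theorem on each finite quotient. In Lemma~\ref{lemma_bases}, only finite sums occur, which is harmless since the graded components are finite-dimensional over $\F_p$ in the finitely generated case.
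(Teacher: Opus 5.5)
Your chain of equivalences (strict embedding $\Leftrightarrow$ injectivity of $\gr^{[p]}_\bullet(\iota)$ $\Leftrightarrow$ graded freeness of restricted enveloping algebras $\Leftrightarrow$, via Quillen, graded freeness of $\gr_\bullet\F_p\dbl H\dbr\to\gr_\bullet\F_p\dbl G\dbr$ $\Leftrightarrow$ filtered freeness) is exactly what the paper means by transferring the proof of Proposition~\ref{prop:strictly-embedded} verbatim. However, the last step fails as you state it.

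Lemma~\ref{lemma_bases} only says that a lift $X$ of a homogeneous $\gr_\bullet\F_p\dbl H\dbr$-basis of $\gr_\bullet\F_p\dbl G\dbr$ is a filtered-free \emph{family}. For (ii) it must also generate $\F_p\dbl G\dbr$, and with finite sums it cannot. Take $H=1$, $G=\Z_p$. Then $H$ is strictly embedded. But since $\dim\gr_n\F_p\dbl\Z_p\dbr=1$, a filtered-free family over $\F_p$ has at most one element of each valuation, so it is countable. Meanwhile $\F_p\dbl\Z_p\dbr\cong\F_p[[t]]$ has uncountable $\F_p$-dimension. (For $\Z_p\times 1\le\Z_p^2$, the module $\F_p[[x,y]]\cong\prod_j\F_p[[x]]\,y^j$ is not even free over $\F_p[[x]]$, being $x$-adically complete.)

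So your remark that only finite sums occur, and that this is harmless because the graded pieces are finite-dimensional, gets it backwards: finite-dimensionality of the pieces is what forces infinite sums. The fix is to read ``filtered free'' topologically. By completeness and successive approximation, every element of $\F_p\dbl G\dbr$ is a unique convergent sum $\sum_i x_ia_i$ with $a_i\in\F_p\dbl H\dbr$ and $\nu(x_i)+\nu(a_i)\to\infty$. The paper's one-line proof passes over the same point. Reading $\F[G]$ as the abstract group algebra does not rescue the literal statement: for $G=\Z_p$ that algebra is again separated, of uncountable dimension, with one-dimensional graded pieces.

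Your ``second subtlety'' does not arise, and the inverse-limit fix would not work anyway. For infinitely generated $H$, the graded pieces of $\F_p\dbl H\dbr$ are completed rather than algebraic tensor constructions, so no algebraic Quillen isomorphism is available. Under (i) one has $\Phi(H)=\uD_2(H)=H\cap\uD_2(G)$, so $H/\Phi(H)$ embeds in the finite group $G/\uD_2(G)$ and $H$ is finitely generated, as the paper notes in the proof of Corollary~\ref{cor:propHNN}. The direction (ii)$\Rightarrow$(i) needs no Quillen map at all: filtered free implies strict, whence $H\cap\uD_k(G)=H\cap(1+\uomega_G^k)=H\cap(1+\uomega_H^k)=\uD_k(H)$, exactly as in the proof of Theorem~\ref{teo:strictly}.

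Finally, $\F_p\dbl G\dbr/\uomega_G^k\cong\F_pG/\omega_G^k$ is correct for the abstract group $G$. It uses that the abstract terms $\gamma_k^{[p]}(G)$ are open, e.g.\ by Serre's theorem. It is false for a dense finitely generated subgroup: $\Z^2$ embeds densely in $\Z_p$ via $(a,b)\mapsto a+b\alpha$ with $\alpha\notin\Q$, yet $\dim_{\F_p}\F_p\Z^2/\omega^2=3\neq 2=\dim_{\F_p}\F_p\dbl\Z_p\dbr/\uomega^2$.
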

\begin{proof} The proof of Propositon~\ref{prop:strictly-embedded}
can be tranferred verbatim.
\end{proof}
Theorem~\ref{thm:amalgamated} and Corollary~\ref{cor:HNN} have  obvious pro-$p$ analogues. In order to state them properly
we will denote by $\ast^{(p)}$ the amalgamated free product in the category of pro-$p$ groups.
Let $G_1$ and $G_2$ be pro-$p$ groups, and let $\lambda_i\colon H\to G_i$ be continuous homomorphisms of pro-$p$ groups. Then $G=G_1\ast_H^{(p)} G_2$ together with the continuous homomorphism $j_i\colon G_i\to G$, $i=1,2$, is the unique pro-$p$ group with the following property: For every pair of continuous homomorphisms $\phi_i\colon G_i\to X$, $i=1,2$,  of pro-$p$ groups satisfying $\phi_1\circ\lambda_1=\phi_2\circ\lambda_2$ there exists a unique continuous homomorphism of pro-$p$ groups $\phi\colon G\to X$ satisfying $\phi\circ j_{i}=\phi_{i}$. Note that, by definition, $G$ coincides with the pro-$p$ completion of the free product with amalgamation $G_\circ=G_1\ast_H G_2$.

\begin{theorem}
    \label{thm:prop}
    Let $G_1$ and $G_2$ be finitely generated pro-$p$ groups, and let $\lambda_i\colon H\to G_i$, $i=1,2$ be strict $p$-embeddings. Then for 
    $G=G_1\ast_H^{(p)} G_2$, $j_i\colon G_i\to G$, $i=1,2$, are
    strict $p$-embeddings.
    Moreover, for $G_\circ=G_1\ast _H G_2$, the pro-$p$ group $G$ coincides with
    the pro-$p$ completion of $G_\circ$ and
    \begin{equation}
        \label{eq:propeq}
\gr_\bullet(G)=\gr_\bullet(G_1)\ast_{\gr_\bullet(H)}
        \gr_\bullet(G_2).
    \end{equation}
\end{theorem}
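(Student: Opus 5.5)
The plan is to transfer the proof of Theorem~\ref{thm:amalgamated} to completed group algebras. The ingredients are a pushout identity for $\F_p\dbl\argu\dbr$, the normal form of Section~\ref{s:alg} applied to graded pieces, and Quillen's theorem in the pro-$p$ setting.

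\textbf{Step 1: reduce to finite quotients.} First note that $G$, by its universal property, is the pro-$p$ completion of $G_\circ$. Since $G_i$ and $H$ are finitely generated pro-$p$ groups, they are residually $[p]$ as discrete groups in the filtration sense. Every open normal subgroup is open in the $\uD$-topology, so $\F_p\dbl G_i\dbr/\uomega_{G_i}^n=\F_p G_i/(\omega_{G_i})^n$ by a Jennings-type argument; the same holds for $H$. Proposition~\ref{prop:se-prop} then gives that $\F_p\dbl H\dbr\to\F_p\dbl G_i\dbr$ is filtered free. By Lemma~\ref{lemma_bases}, $\gr_\bullet\F_p\dbl H\dbr\to\gr_\bullet\F_p\dbl G_i\dbr$ is graded free.

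\textbf{Step 2: build a compatible filtered system.} Form the discrete algebra pushout $B=\F_p\dbl G_1\dbr\ast_{\F_p\dbl H\dbr}\F_p\dbl G_2\dbr$ with its standard filtration (Lemma~\ref{lem:filtered-basis}). By Proposition~\ref{prop:filtered-graded}, $\gr_\bullet B\cong\gr_\bullet\F_p\dbl G_1\dbr\ast_{\gr_\bullet\F_p\dbl H\dbr}\gr_\bullet\F_p\dbl G_2\dbr$, and this is separated. By Proposition~\ref{prop:standardaugmentation}, whose hypothesis (ii) is checked as before, the standard filtration is the augmentation filtration. Each $B/F^nB$ is finite dimensional, because each graded piece of the free product is, using the finiteness of the $\gr_k$ for finitely generated groups. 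Hence $\hat B=\varprojlim B/F^nB$ is a compact augmented algebra.

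\textbf{Step 3: identify $\hat B$ with $\F_p\dbl G\dbr$.} The group-like elements of $\hat B$ coming from $G_1$ and $G_2$ generate a pro-$p$ group $\Gamma\subseteq 1+\hat{B}^+$, since each $(1+F^1)/(1+F^n)$ is a finite $p$-group. The universal properties of $\ast^{(p)}$ and of $\ast$ for algebras then produce mutually inverse continuous maps $\F_p\dbl G\dbr\leftrightarrow\hat B$. This uses that $\F_p\dbl G\dbr/\uomega_G^n$ is a quotient of $\F_p G_\circ$ compatible with $B/F^nB$, which I expect to follow by comparing universal properties levelwise.

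\textbf{Step 4: read off the conclusions.} With $\hat B\cong\F_p\dbl G\dbr$, we get $\gr_\bullet\F_p\dbl G\dbr\cong\gr_\bullet B$, which is the amalgamated product of the graded algebras. The maps $\F_p\dbl G_i\dbr\to\F_p\dbl G\dbr$ are strict by Lemma~\ref{lem:normalform-filtered-strict}, so exactly as in the proof of Theorem~\ref{teo:strictly}, $G_i\cap\uD_k(G)=\uD_k(G_i)$. This shows that the $j_i$ are strict $p$-embeddings, and in particular injective. Finally, Quillen's isomorphism $\ur(\gr^{[p]}_\bullet(-))\cong\gr_\bullet\F_p\dbl-\dbr$ for finitely generated pro-$p$ groups, together with full faithfulness of the restricted enveloping algebra functor, yields \eqref{eq:propeq}.

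The main obstacle is Step 3. One must show that the levelwise quotients of the abstract pushout agree with those of the completed group algebra of the pro-$p$ amalgam, so that completion commutes with the pushout. I would first try to handle this through finite-dimensionality of the $F^n$-quotients and the fact that $\uomega_G^n$ is open.
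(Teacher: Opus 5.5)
\textbf{Verdict: genuine gap.} The failure is the use of algebraic filtered freeness for completed group algebras (end of Step~1 and all of Step~2), not Step~3.

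\textbf{Why Step~2 fails.} Proposition~\ref{prop:normalform}, Lemma~\ref{lem:filtered-basis} and Proposition~\ref{prop:filtered-graded} are purely algebraic. They need an honest $\F_p\dbl H\dbr$-basis $\caB_i\ni 1$ of $\F_p\dbl G_i\dbr$ satisfying condition~(i) of a filtered-free family. For completed group algebras no such basis exists once $H$ has infinite index.

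This is already visible for $H=1$, i.e.\ free pro-$p$ products, where the trivial subgroup is strictly $p$-embedded. With coefficients in $\F_p$, condition~(i) forces the leading terms of the elements of $\caB_i$ of valuation $k$ to be linearly independent in the finite-dimensional space $\gr_k\F_p\dbl G_i\dbr$. Separatedness excludes elements of infinite valuation. So $\caB_i$ would be countable and span a countable set, whereas $\F_p\dbl G_i\dbr$ is uncountable for infinite $G_i$. A similar count works whenever $[G_i:H]=\infty$.

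So whatever Proposition~\ref{prop:se-prop} gives for completed algebras can only be a \emph{topological} filtered basis, with convergent expansions. The normal form of Section~\ref{s:alg} does not apply to such a basis. Consequently the following are all unsupported:
\begin{itemize}
\item the standard filtration on $B=\F_p\dbl G_1\dbr\ast_{\F_p\dbl H\dbr}\F_p\dbl G_2\dbr$;
\item the isomorphism $\gr_\bullet B\cong\gr_\bullet\F_p\dbl G_1\dbr\ast_{\gr_\bullet\F_p\dbl H\dbr}\gr_\bullet\F_p\dbl G_2\dbr$;
\item the finite dimensionality of $B/F^nB$;
\item the strictness you invoke in Step~4.
\end{itemize}
To go this way you would need a normal form theorem for completed amalgamated products, which you do not have.

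\textbf{Step~3 is not the obstacle.} Once Step~2 is granted it is routine. Each $1+B^+/F^nB$ is a finite $p$-group, so $1+\hat B^+$ is pro-$p$. The map $B\to\F_p\dbl G\dbr$ is filtered because $F^nB=(B^+)^n$. The two universal properties plus density then give mutually inverse maps.

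\textbf{The paper's route.} The paper never forms a pushout of completed algebras. It regards $G_1,G_2,H$ as abstract groups. In a finitely generated pro-$p$ group the abstract subgroups $\gamma_k^{[p]}$ have finite index and hence are open, so strict $p$-embedding means the same in both settings. It then applies Theorems~\ref{thm:graded} and~\ref{teo:strictly} to the discrete amalgam $G_\circ=G_1\ast_HG_2$:
\begin{itemize}
\item $G_\circ$ is residually $[p]$, hence residually $p$-finite, since $G_\circ/\gamma_2^{[p]}(G_\circ)$ is finite.
\item So $G=\varprojlim_k G_k$, where $G_k=G_\circ/\gamma_k^{[p]}(G_\circ)$ are finite $p$-groups.
\item The maps $\F_pG_k\to\F_pG_\circ/\omega_{G_\circ}^k$ and $\F_pG_\circ/\omega_{G_\circ}^{n_k}\to\F_pG_k$ (with $\omega_{G_k}^{n_k}=0$) identify $\F_p\dbl G\dbr$ with $\varprojlim_k\F_pG_\circ/\omega_{G_\circ}^k$.
\item Hence $\gr_\bullet\F_p\dbl G\dbr=\gr_\bullet\F_pG_\circ$, and both the strictness of the $j_i$ and \eqref{eq:propeq} are inherited from the discrete amalgam.
\end{itemize}
Amalgamating discretely, where the discrete theorems can be quoted, and completing only at the end is the idea your proposal is missing.

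\textbf{Minor point.} Finite generation of $H$ is not a hypothesis. It follows because $H/\uD_2(H,\F_p)$ embeds in the finite group $G_1/\uD_2(G_1,\F_p)$.
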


\begin{proof}
By Theorem~B, $G_\circ$ is residually $[p]$, and by hypothesis
$G_\circ/\gamma_1^{[p]}(G_\circ)$ is finite implying that
$G_\circ$ is residually $p$-finite (cf. Proposition~\ref{prop:resp}).
Therefore, one has a canonical isomorphism 
$G\cong\varprojlim_{k\geq 1} G_\circ/\gamma_k^{[p]}(G_\circ)$.

For $k\geq 1$ put $G_k=G_\circ/\gamma_k^{[p]}(G_\circ)$ and let $\tau_K\colon G_\circ\to \F_pG_\circ/\omega_{G_\circ}^k$ denote the canonical map. Then one has $\gamma_k^{[p]}(G_{\circ})=\tau_k^{-1}(\{1\})$
and $\tau_k$ induces a canonical multiplicative map
\begin{align}
    \tau_k^\prime\colon & G_k\longrightarrow
    \F_pG_\circ/\omega_{G_\circ}^k\label{eq:taukprime}\\
    \intertext{and thus a surjective homomorphism of
    $\F_p$-algebras}
    \tau_k^{\prime\prime}\colon& \F_pG_k
\longrightarrow\F_pG_\circ/\omega_{G_\circ}^k.
\label{eq:taukcirc}
\end{align}
Thus applying $\varprojlim_{k\geq 1}$ yields a map
$\tau^{\prime\prime}\colon\FG\longrightarrow
\varprojlim_{k\geq 1} \F_pG_\circ/\omega_{G_\circ}^k$.
Since the groups $G_k=G_\circ/\gamma_k^{[p]}(G_\circ)$ are finite 
$p$-groups, there exists an integer $n_k$ such that 
$\omega_{G_k}^{n_k}=\{0\}$. 
This implies the existence of a canonical  map
    $\rho_k\colon\F_pG_\circ/\omega_{G_\circ}^{n_k}
    \longrightarrow \F_p G_k$.
Applying $\varprojlim_{k\geq 1}$ yields a homomorphism of $\F_p$-algebras
$\rho\colon \varprojlim_{k\geq 1} \F_p G_\circ/\omega_{G_\circ}^k\to\FG$, which is the inverse of $\tau^{\prime\prime}$.
Hence $\tau^{\prime\prime}$ is an isomorphism, and reasoning as in the proof of
Theorem~\ref{thm:amalgamated} yields an isomorphism
$\gr_\bullet^{[p]}(G)=\gr_\bullet^[p](G_\circ)$.
\end{proof}

Let $G_1$ be a pro-$p$ group, $H\subseteq G_1$
a closed subgroup of $G_1$ and $\varphi\colon H\to H$
a continuous isomorphism. Then we denote by
\begin{equation}
    \label{eq:proHNN}
    G=G_1\ast^{(p)}_{H,\varphi,t}
\end{equation}
the pro-$p$ HNN-extension of $(G,H,\varphi)$ with stable letter $t$, i.e.,
$G$ is the unique pro-$p$ group containing a distinguished element $t$ with the property
that for any homomorphism of pro-$p$ groups
$\phi_\circ\colon G_1\to X$, where $X$ is a pro-$p$ group containing a closed subgroup $Y$ isomorphic to $H$
and an element $x\in X$ such that conjugation by $x$
induces the isomorphism $\varphi$ on $Y$, there exists a unique homomorphism $\phi\colon G\to X$ satisfying
$\phi\vert_{G_1}=\phi_\circ$ and $\phi(t)=x$.
Clearly, as before $G_1\ast^{(p)}_{H,\varphi}$ coincides with the pro-$p$ completion of $G_1\ast_{H,\varphi,t}$
For pro-$p$ groups one obtains the following:

\begin{corollary}\label{cor:propHNN} 
Let $G_1$ be a finitely generated pro-$p$ group,
let $H\leq G_1$ be a strictly $p$-embedded subgroup and $\varphi:H\to H$ an automorphism that acts trivially on $H/\gamma^{[p]}_2(H)$. Let $$G=G_1\ast_{H,\varphi}^{(p)}.$$
Then
$$\gr_\bullet^{[p]}(G)=\gr_\bullet^{[p]}(G_1)\ast_{\gr_\bullet^{[p]}(H),d}$$
where $d$ is the derivation of $\gr_\bullet^{[p]}(H)$ induced by $\varphi$.
\end{corollary}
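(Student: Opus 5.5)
The plan is to mirror the proof of Corollary~\ref{cor:HNN}, with Theorem~\ref{thm:prop} in place of Theorem~\ref{thm:amalgamated}.

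\emph{Step 1: the second vertex group.} Put $G_2=\overline{\langle t\rangle}\ltimes H$, where $\overline{\langle t\rangle}\cong\Z_p$ acts on $H$ through $\varphi$. The action is continuous because the continuous automorphisms of a finitely generated pro-$p$ group form a virtually pro-$p$ group; at worst one replaces $\Z_p$ by a suitable pro-$p$ procyclic group. This $G_2$ is a finitely generated pro-$p$ group; $H$ is finitely generated as a closed subgroup of $G_2$ of finite index, or more simply as a retract of $G_2$. Since $\varphi$ acts trivially on $H/\gamma_2^{[p]}(H)$, $G_2$ is an almost $p$-direct product. Lemma~\ref{lem:FRp} is stated for discrete groups, so I would apply it to each finite quotient $(\langle t\rangle/\langle t^{p^m}\rangle)\ltimes H/U$, with $U$ open, characteristic and $\varphi$-stable, and pass to the inverse limit. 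Using $\uD_k=\cl(\gamma_k^{[p]})$ and compactness, this gives $\uD_k(G_2,\F_p)=\uD_k(\overline{\langle t\rangle},\F_p)\ltimes \uD_k(H,\F_p)$. Consequently $H$ is strictly $p$-embedded in $G_2$, and
\[
\gr_\bullet^{[p]}(G_2)=\gr_\bullet^{[p]}(\Z_p)\ltimes_d\gr_\bullet^{[p]}(H).
\]
Here $\gr_\bullet^{[p]}(\Z_p)$ is the abelian restricted Lie algebra on one generator $\bar t$ in degree one with all $p$-powers $\bar t^{[p^k]}$ nonzero, and $d$ is the derivation induced by $\varphi$.

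\emph{Step 2: pushout identification.} Show $G_1\ast^{(p)}_{H,\varphi}\cong G_1\ast^{(p)}_H G_2$ by comparing universal properties. A continuous homomorphism out of either group amounts to a map $\phi_\circ$ on $G_1$ together with an element $x$ whose conjugation realises $\varphi$ on $\phi_\circ(H)$. This is exactly the discrete identity $G_1\ast_{H,\varphi}=G_1\ast_H(\langle t\rangle\ltimes H)$ followed by pro-$p$ completion; one has to check that $\Z_p$ versus $\Z$ does not matter after completion, which holds since maps to pro-$p$ groups extend continuously from $\langle t\rangle$ to its closure.

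\emph{Step 3: apply Theorem~\ref{thm:prop} and rewrite.} Theorem~\ref{thm:prop}, with both embeddings strict, gives
\[
\gr_\bullet^{[p]}(G)=\gr_\bullet^{[p]}(G_1)\ast_{\gr_\bullet^{[p]}(H)}\bigl(\gr_\bullet^{[p]}(\Z_p)\ltimes_d\gr_\bullet^{[p]}(H)\bigr).
\]
A presentation computation then identifies this pushout with the restricted Lie HNN-extension $\gr_\bullet^{[p]}(G_1)\ast_{\gr_\bullet^{[p]}(H),d}$: one adjoins a free restricted generator $\bar t$ subject to $[\bar t,h]=d(h)$ for $h\in\gr_\bullet^{[p]}(H)$. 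The universal properties match on both sides, exactly as at the end of Corollary~\ref{cor:HNN}.

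The main obstacle is Step 1, namely transferring Lemma~\ref{lem:FRp} to the profinite setting. The subtlety is that closures of $\gamma_n^{[p]}$ in the semidirect product must split. The approach I would try first is the finite-quotient and inverse-limit argument above, using that $\cl(\gamma_k^{[p]}(G_2))$ is the inverse limit of the corresponding terms of the finite quotients. If that fails, the fallback is to argue directly on the restricted Lie algebras: the retraction $G_2\to H$ already gives injectivity of $\gr_\bullet^{[p]}(H)\to\gr_\bullet^{[p]}(G_2)$, which is all that Theorem~\ref{thm:prop} needs. The explicit semidirect decomposition is then only required for the final rewriting step, and it could be obtained from Quillen's isomorphism for $\F_p\dbl G_2\dbr$.
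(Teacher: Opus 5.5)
Your overall route is the paper's. You realise $G$ as $G_1\ast^{(p)}_H G_2$ with $G_2$ the semidirect product of the stable letter with $H$, get strictness of $H\le G_2$ from Lemma~\ref{lem:FRp}, apply Theorem~\ref{thm:prop}, and rewrite the presentation. The difference is in how Lemma~\ref{lem:FRp} is used. The paper applies it to the discrete group $\langle t\rangle\ltimes_\varphi H$ and then passes to the pro-$p$ completion. You build $\Z_p\ltimes H$ directly and transfer the lemma through the finite quotients $C_{p^m}\ltimes H/U$. That transfer is sound: take $m$ with $\varphi^{p^m}$ trivial on $H/U$, so that $\cl(\langle t^{p^m}\rangle)U$ is normal, and note these subgroups are cofinal. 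It also avoids treating the pro-$p$ group $H$ as an abstract group.

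There is, however, a real error in your preliminaries. Finite generation of $H$ is needed throughout:
\begin{itemize}
\item for $\mathrm{Aut}(H)$ to be profinite;
\item for the characteristic open subgroups $U$ to be cofinal;
\item for $G_2$ to satisfy the hypotheses of Theorem~\ref{thm:prop}.
\end{itemize}
Both of your justifications fail. First, $H$ has infinite index in $\Z_p\ltimes H$. Second, $H$ is in general \emph{not} a retract of $G_2$: a retraction $r$ would force conjugation by $r(t)$ to induce $\varphi$ on $H$, so one exists only when $\varphi$ is conjugation by an element of $H$. The natural retract of $G_2$ is $\overline{\langle t\rangle}$, not $H$. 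Both arguments also presuppose that $G_2$ is finitely generated, which is circular.

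The correct argument, which the paper uses, is this. Strict $p$-embedding at level $k=2$ gives $H/\Phi(H)\hookrightarrow G_1/\Phi(G_1)$, where $\Phi(\cdot)=\cl(\gamma_2^{[p]}(\cdot))$. Hence $H/\Phi(H)$ is finite and $H$ is finitely generated.

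The same false retraction invalidates the fallback in your last paragraph. Injectivity of $\gr_\bullet^{[p]}(H)\to\gr_\bullet^{[p]}(G_2)$ must come from the splitting $\uD_k(G_2,\F_p)=\uD_k(\overline{\langle t\rangle},\F_p)\ltimes\uD_k(H,\F_p)$.

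Finally, the $\Z_p$-action is not continuous merely because $\mathrm{Aut}(H)$ is virtually pro-$p$; an automorphism of order prime to $p$ would not extend to $\Z_p$. It is continuous because the hypothesis on $\varphi$ places it in the pro-$p$ kernel of $\mathrm{Aut}(H)\to\mathrm{Aut}(H/\Phi(H))$. Your suggested fallback of replacing $\Z_p$ by some other procyclic pro-$p$ group would be wrong. The map $G\to\Z_p$ killing $G_1$ and sending $t\mapsto 1$ shows $\overline{\langle t\rangle}\cong\Z_p$ in $G$, so a finite cyclic replacement changes the group.
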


\begin{proof} 
Note that as $H$ is strictly $p$-embedded in the finitely generated pro-$p$ group $G$, it is also finitely generated.
Let $G_2=\langle t\rangle\ltimes_{\varphi} H$, where $t$ is the stable letter and the $t$ action is given by $\varphi$. Then $G_2$ is an almost $p$-direct product. So $H$ is strictly $p$-embedded in $G_2$ by Lemma~\ref{lem:FRp}. Moreover
\begin{equation}
    \label{eq:deco1}
G_1\ast_{H,\varphi,t}=G_1\ast_HG_2.
\end{equation}
Therefore, as $G_2$ is residually $[p]$ and as
$G_2/\gamma_2^{[p]}(G_2)$ is finite, $G_2$ is residually $p$-finite (cf. Proposition~\ref{prop:resp}). Let $(G_2)^\vee_p$ denote its 
pro-$p$ completion. Then \eqref{eq:deco1} implies that
\begin{equation}
    \label{eq:deco2}
    G_1\ast_{H,\varphi}^{(p)}\cong G_1\ast_{H}^{(p)}
    (G_2)^\vee_p
\end{equation}
and as - by hypothesis - $H$ is strictly $p$-embedded in $G_1$ and $(G_2)^\vee_p$, Theorem \ref{thm:prop} implies that 
$$\gr_\bullet^{[p]}(G)=\gr_\bullet^{[p]}(G_1)\ast_{\gr_\bullet^{[p]}(H)}\gr_\bullet^{[p]}(G_2).$$
Hence the obvious modification of the presentation of $\gr_\bullet^{[p]}(G)$ gives the desired result.
\end{proof}

\subsection{Criteria for strict $p$-embeddings}
\label{ss:crit}
Some applications require criteria ensuring
for a (closed) subgroup $H$ of a (pro-$p$) group $G$ to be strictly $p$-embedded. 

\begin{lemma}
\label{lem:iteratealmost} Let $p$ be a prime and 
let $G$ be a (pro-$p$) group with (closed) subgroups $N,H,T$ such that $N\leq T$, $T\cap H=1$, $N$ and $H_1=NH$ are normal in $G$ and  the products $H_1=N\rtimes H$ and $G/N=T/N\ltimes H_1/N$ are almost $p$-direct products. Then $H$ is strictly $p$-embedded in $G$.
\end{lemma}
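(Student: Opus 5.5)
We apply Lemma~\ref{lem:FRp} twice and then compare filtrations. Throughout, write $\gamma_n=\gamma_n^{[p]}$; in the pro-$p$ case take closures, and the argument is verbatim the same.

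\textbf{Step 1: the filtration of $H_1$.} Since $H_1=N\rtimes H$ is an almost $p$-direct product, Lemma~\ref{lem:FRp} gives
$$\gamma_n(H_1)=\gamma_n(H)\ltimes\gamma_n(N)$$
for all $n$. In particular $H_1\cap\gamma_n(G)\supseteq\gamma_n(H)\gamma_n(N)$.

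\textbf{Step 2: the quotient $G/N$.} The product $G/N=T/N\ltimes H_1/N$ is also almost $p$-direct, so Lemma~\ref{lem:FRp} gives
$$\gamma_n(G/N)=\gamma_n(T/N)\ltimes\gamma_n(H_1/N).$$
Hence $H_1/N$ is strictly $p$-embedded in $G/N$. Since $H\cong H_1/N$ via the projection (because $H\cap N\le H\cap T=1$), this means the following. If $h\in H\cap\gamma_n(G)$, then its image $hN$ lies in $\gamma_n(G/N)\cap H_1/N=\gamma_n(H_1/N)$. This last group equals $\gamma_n(H_1)N/N$, because $\gamma_n$ of a quotient is the image of $\gamma_n$, as the series is verbal. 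By Step 1 it is therefore $\gamma_n(H)N/N$.

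\textbf{Step 3: conclude.} From Step 2, $h\in\gamma_n(H)N\cap H$. Since $N\cap H=1$ and $\gamma_n(H)\le H$, we get $h\in\gamma_n(H)$. So
$$H\cap\gamma_n(G)\le\gamma_n(H).$$
The reverse inclusion is trivial, so $H$ is strictly $p$-embedded in $G$.

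This argument does not use Step 1 in full: it only needs $\gamma_n(H_1/N)=\gamma_n(H_1)N/N$ and the observation that $\gamma_n(H_1)N=\gamma_n(H)N$. That observation already follows from the almost direct product decomposition of Step 1, or even simply from $H_1=NH$ together with the fact that $\gamma_n$ commutes with surjections.

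\textbf{Main point to check.} The step needing care is the identification $\gamma_n(G/N)\cap(H_1/N)=\gamma_n(H_1/N)$. This is exactly the strict embedding of the normal factor $H_1/N$ in the almost $p$-direct product $G/N$, and it is supplied by Lemma~\ref{lem:FRp}. In the pro-$p$ case one must also check that the closures behave correctly: images of closed subgroups under continuous maps of compact groups are closed, so nothing is lost when passing to closures.
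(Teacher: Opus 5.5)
Your proof is correct and follows the paper's route. Lemma~\ref{lem:FRp}, applied to $G/N=T/N\ltimes H_1/N$, gives $\gamma_n^{[p]}(G/N)\cap H_1/N=\gamma_n^{[p]}(H_1)N/N$, and one pulls this back to $H$ using $\gamma_n^{[p]}(H_1)N=\gamma_n^{[p]}(H)N$ and $N\cap H=1$.

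Your side remark is also right. That identity follows from $H_1=NH$ alone, since both sides contain $N$ and both map onto $\gamma_n^{[p]}(H_1/N)$. So the paper's appeal to the almost $p$-direct structure of $N\rtimes H$ in \eqref{eq:tectwisted2} is not actually needed, and that hypothesis of the lemma is redundant.
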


\begin{proof} We have to check that for any $i\geq 1$, $\cl(\gamma_i^{[p]}(H))=\cl(\gamma_i^{[p]}(G))\cap H$.
The fact that $G/N=T/N\ltimes H_1/N$ is an almost $p$-direct product implies that $H_1/N$ is strictly $p$-embedded in $G/N$ (cf. Lemma~\ref{lem:FRp}), so
\begin{equation}\label{eq:tectwisted1}
\begin{aligned}
    (\cl(\gamma_i^{[p]}(G))N\cap H_1)/N&=\cl(\gamma_i^{[p]}(G/N))\cap(H_1/N)=\cl(\gamma_i^{[p]}(H_1/N))\\
    &=\cl(\gamma_i^{[p]}(H_1))N/N.
    \end{aligned}
\end{equation}
As $H_1=N\rtimes H$ is also an almost $p$-direct product, one concludes that $\cl(\gamma_i^{[p]}(H_1))=\cl(\gamma_i^{[p]}(N))\cl(\gamma_i^{[p]}(H))$. Hence
\begin{equation}\label{eq:tectwisted2}
   \cl(\gamma_i^{[p]}(H_1))N\cap H=
   \cl(\gamma_i^{[p]}(H)). 
\end{equation}
Therefore using (\ref{eq:tectwisted1}) and (\ref{eq:tectwisted2}) one obtains
\[
\begin{aligned}
\cl(\gamma_i^{[p]}(G))\cap H&\leq\cl(\gamma_i^{[p]}(G))N\cap H=(\cl(\gamma_i^{[p]}(G))N\cap H_1)\cap H\\
&=\cl(\gamma_i^{[p]}(H_1))N\cap H=
\cl(\gamma_i^{[p]}(H))
\end{aligned}\]
and as $\cl(\gamma_i^{[p]}(H))\leq\cl(\gamma_i^{[p]}(G))\cap H$ is obvious, they must indeed coincide.
\end{proof}

For our next criteria we will make use of the following.

\begin{lemma}
\label{lem:help}
Let $\F$ be a field and let 
$\alpha_\bullet\colon A_\bullet\to B_\bullet$ be a homomorphism of graded connected $\F$-algebras
(resp. $\F$-Lie algebras, restricted $\F$-Lie algebras
in case that $\chrs(\F)=p>0$, etc.). Assume further
that
\begin{itemize}
    \item[\textup{(i)}] $A_\bullet$ is $1$-generated, i.e.,
    $A_\bullet=\langle A_1\rangle$ and $\dim_{\F}(A_1)<\infty$;
    \item[\textup{(ii)}] $\alpha_1\colon A_1\to B_1$ is injective;
    \item[\textup{(iii)}] $\langle\image(\alpha_1)\rangle\cong A_\bullet$.
\end{itemize}
    Then $\alpha_\bullet\colon A_\bullet\to B_\bullet$ is injective.
\end{lemma}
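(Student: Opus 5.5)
The plan is to read hypothesis (iii) as saying that the subalgebra $\langle\image(\alpha_1)\rangle$ of $B_\bullet$ generated by $\alpha_1(A_1)$ is isomorphic to $A_\bullet$ as a graded algebra (in the relevant category). From this I will deduce that the corestriction $\alpha_\bullet\colon A_\bullet\to\langle\image(\alpha_1)\rangle$ is bijective. Since $A_\bullet=\langle A_1\rangle$, the image of $\alpha_\bullet$ is the subalgebra generated by $\alpha_\bullet(A_1)=\image(\alpha_1)$. Hence $\alpha_\bullet$ surjects onto $C_\bullet:=\langle\image(\alpha_1)\rangle$, and it suffices to show that this surjection is injective.

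The key point is a dimension count in each degree. Condition (i) gives $\dim_\F A_1<\infty$, so by 1-generation every homogeneous component $A_n$ is finite dimensional: it is spanned by images of products (or iterated brackets, resp. with $p$-powers) of elements of $A_1$. By (iii) there is a graded isomorphism $C_\bullet\cong A_\bullet$, so $\dim C_n=\dim A_n<\infty$ for all $n$. Then the surjective linear map $\alpha_n\colon A_n\to C_n$ between finite dimensional spaces of equal dimension is bijective, and therefore $\alpha_\bullet$ is injective.

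The main obstacle is making sure the isomorphism in (iii) is \emph{graded}, i.e.\ degree preserving. If (iii) is only meant abstractly, I would first observe that $C_\bullet$ is graded and generated in degree $1$ with $C_1=\image(\alpha_1)\cong A_1$ by (ii). I would then interpret (iii) as an isomorphism of graded objects compatible with these degree-one generators, which is what the intended applications supply. Under that interpretation, hypothesis (ii) is used only to guarantee $\dim C_1=\dim A_1$, and the degreewise counting argument closes the proof uniformly for associative algebras, Lie algebras and restricted Lie algebras.
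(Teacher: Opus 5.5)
Your proposal is correct and follows essentially the same route as the paper. Both arguments corestrict $\alpha_\bullet$ to $C_\bullet=\langle\image(\alpha_1)\rangle$, get surjectivity from $1$-generation, and then use finite type together with (iii), read as a degree-preserving isomorphism so that $\dim_\F A_k=\dim_\F C_k$ (the paper makes this assumption implicitly, as you point out), to conclude that each $\alpha_k$ is injective.
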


\begin{proof}
Note that by (i), $A_\bullet$ is of finite type, i.e,
$\dim_{\F}(A_k)<\infty$ for all $k\geq 0$. Put
$C_\bullet=\langle\image(\alpha_1\rangle$. Then, by definition, $C_\bullet$ is $1$-generated and thus of finite type, i.e., $\dim_\F(C_k)<\infty$ for all $k>0$. The induced map 
$\alpha_\bullet^\vee\colon A_\bullet\to C_\bullet$ is
a homomorphism of $1$-generated graded connected $\F$-algebras
(resp. $\F$-Lie algebras, restricted $\F$-Lie algebras, etc. and $\alpha_1^\vee$ is an isomorphism
by (ii).
Thus $\alpha_\bullet^\vee$ is surjective, i.e.,
$\alpha_k^\vee$ is surjective for all $k>0$. Hence
(iii) implies that $\kernel(\alpha^\vee_k)=0$ for all
$k>0$, and $\alpha_\bullet^\vee$ is an isomorphism.
\end{proof}

\begin{proposition}
    \label{prop:help} Let $p$ be either $0$ or a prime.
    Let $H$ be a finitely generated (closed) subgroup of the (pro-$p$) group $G$ and let $\iota\colon H\to G$ denote the canonical inclusion. Assume further that
    \begin{itemize}
        \item[\textup{(i)}] $\gr_1^{[p]}(\iota)\colon \gr_1^{[p]}(H)\to\gr_1^{[p]}(G)$ is injective,
        \item[\textup{(ii)}] $\langle\image(\gr_1^{[p]}(\iota)\rangle
        \cong\gr_\bullet^{[p]}(H)$.
    \end{itemize}
    Then $H$ is strictly $p$-embedded in $G$.
\end{proposition}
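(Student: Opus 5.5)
The plan is to apply Lemma~\ref{lem:help} to the graded homomorphism $\alpha_\bullet=\gr_\bullet^{[p]}(\iota)\colon\gr_\bullet^{[p]}(H)\to\gr_\bullet^{[p]}(G)$. For $p=0$ we work in the category of graded Lie algebras over $\Q$; for $p$ a prime we work with graded restricted $\F_p$-Lie algebras. We then use the observation recorded before Proposition~\ref{prop:se-prop} (and in the proof of Proposition~\ref{prop:strictly-embedded}) that $H$ is strictly $p$-embedded in $G$ if, and only if, $\gr_\bullet^{[p]}(\iota)$ is injective. Once injectivity is established, the conclusion is therefore immediate.

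First I check the hypotheses of Lemma~\ref{lem:help}. For hypothesis (i), I need that $A_\bullet=\gr_\bullet^{[p]}(H)$ is generated by its degree-one component and that $\dim A_1<\infty$.
\begin{itemize}
\item[(a)] Generation in degree one: in the prime case $\gr^{[p]}_\bullet(H)$ is generated as a restricted Lie algebra by $\gr_1^{[p]}(H)$. This follows from the description $\gamma_n^{[p]}(H)=\prod_{jp^s\ge n}\gamma_j(H)^{p^s}$: commutators of degree-one elements and their $p$-th powers span. In the case $p=0$, $\gr^{[0]}_\bullet(H)$ is generated as a Lie algebra by degree one. This is because $\gamma_n^{[0]}/\gamma_{n+1}^{[0]}\otimes\Q$ agrees with $\gamma_n/\gamma_{n+1}\otimes\Q$, which is spanned by $n$-fold commutators. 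I would state this as the standard fact, as the paper already does before Lemma~\ref{lem:FRp}.
\item[(b)] Finite dimension: $\dim \gr_1^{[p]}(H)<\infty$ holds because $H$ is finitely generated. In the pro-$p$ case, topological finite generation suffices, since $\gr_1=H/\cl(\gamma_2^{[p]}(H))$ is then a finite elementary abelian group.
\end{itemize}

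Hypothesis (ii) of Lemma~\ref{lem:help} is exactly assumption (i) of the proposition. Hypothesis (iii) is assumption (ii). Here I interpret the abstract isomorphism in (ii) as being induced by $\alpha$, i.e.\ the subalgebra generated by $\image(\alpha_1)$ is isomorphic to $A_\bullet$ as a graded object with the same finite-dimensional graded pieces. This is precisely the form used in the proof of Lemma~\ref{lem:help}, which only needs $\dim C_k=\dim A_k$ to conclude from surjectivity of $A_k\to C_k$. The lemma then yields injectivity of $\alpha_\bullet$, hence $H\cap\gamma_k^{[p]}(G)=\gamma_k^{[p]}(H)$ for all $k$. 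In the pro-$p$ setting the statement reads with closures, $\uD_k$ in place of $\gamma_k^{[p]}$.

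The main point needing care is the equivalence between injectivity of $\gr_\bullet^{[p]}(\iota)$ and strict embedding. I will argue it by induction on $k$. Suppose $H\cap\gamma_k^{[p]}(G)=\gamma_k^{[p]}(H)$, and let $h\in H\cap\gamma_{k+1}^{[p]}(G)$. Then $h\in\gamma_k^{[p]}(H)$, and its class in $\gr_k^{[p]}(H)$ maps to $0$ in $\gr_k^{[p]}(G)$. Injectivity forces $h\in\gamma_{k+1}^{[p]}(H)$. The base case $k=1$ is trivial. The reverse inclusion is automatic.

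A second subtlety is finiteness of the graded pieces, which is needed so that a surjection between equal-dimensional spaces is bijective. This is ensured by (i) of the lemma via 1-generation, which is why finite generation of $H$ is assumed.
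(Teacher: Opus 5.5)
Your proposal is correct and follows the paper's proof. You apply Lemma~\ref{lem:help} to $\gr_\bullet^{[p]}(\iota)$, with $1$-generation of $\gr_\bullet^{[p]}(H)$ and finite generation of $H$ giving hypothesis (i), and hypotheses (i) and (ii) of the proposition giving (ii) and (iii) of the lemma; your induction turning injectivity of $\gr_\bullet^{[p]}(\iota)$ into $H\cap\gamma_k^{[p]}(G)=\gamma_k^{[p]}(H)$ just spells out an equivalence the paper takes for granted (for $p=0$ it tacitly uses that $\gamma_k^{[0]}(H)/\gamma_{k+1}^{[0]}(H)$ is torsion-free, so it embeds in its rationalization).
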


\begin{proof}
    It is well-known that $\gr_\bullet^{[p]}(H)$
    is a $1$-generated restricted $\F_p$-Lie algebra
    for $p>0$, and a $1$-generated $\Q$-Lie algebra
    in case that $p=0$. Hence the finite generation
    of $H$ implies (i) of Lemma~\ref{lem:help}.
    Moreover, (ii) of Lemma~\ref{lem:help} follows from (i), and
    (iii) of Lemma~\ref{lem:help} follows from (ii).
  Hence Lemma~\ref{lem:help} implies that
  $\gr_\bullet^{[p]}(\iota)$ is injective.
\end{proof}

From Proposition~\ref{prop:help} one concludes the following 

\begin{proposition}
    \label{prop:crit}
    Let $p$ be either $0$ or a prime.
    Let $H$ be a finitely generated (closed) subgroup of the (pro-$p$) group $G$ and let $\iota\colon H\to G$ denote the canonical inclusion. Assume further that 
    \begin{itemize}
        \item[\textup{(i)}] 
    there exists a continuous homomorphism $\tau\colon G\to H/\gamma_2^{[p]}(H)$ such that
    $\iid_{\gr_1^{[p]}(H)}=\gr_1^{[p]}(\tau)\circ\gr_1^{[p]}(\iota)$;
    \item[\textup{(ii)}]  $\langle\image(\gr_1^{[p]}(\iota)\rangle
        \cong\gr_\bullet^{[p]}(H)$.
    \end{itemize}
    Then $H$
    is strictly $p$-embedded in $G$.
\end{proposition}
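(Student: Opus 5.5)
The plan is to reduce Proposition~\ref{prop:crit} to Proposition~\ref{prop:help}. Hypothesis (ii) is literally hypothesis (ii) of Proposition~\ref{prop:help}, and $H$ is finitely generated by assumption. So it only remains to derive hypothesis (i) of Proposition~\ref{prop:help}, namely that $\gr_1^{[p]}(\iota)\colon\gr_1^{[p]}(H)\to\gr_1^{[p]}(G)$ is injective, from the splitting in hypothesis (i). Proposition~\ref{prop:help} then gives that $\gr_\bullet^{[p]}(\iota)$ is injective, i.e.\ that $H$ is strictly $p$-embedded in $G$.

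First I make sense of $\gr_1^{[p]}(\tau)$. The target $A=H/\gamma_2^{[p]}(H)$ satisfies $\gamma_2^{[p]}(A)=1$. For $p$ prime this holds because $\gamma_2^{[p]}(H)$ contains $[H,H]$ and $H^p$. For $p=0$ it holds because $H/\gamma_2^{[0]}(H)$ is torsion free abelian. Consequently $\gr_1^{[p]}(A)=A$ (tensored with $\Q$ when $p=0$), and the natural map $H\to A$ induces the identity $\gr_1^{[p]}(H)\to\gr_1^{[p]}(A)$. Since $\gamma_n^{[p]}$ is functorial, $\tau$ maps $\gamma_2^{[p]}(G)$ into $\gamma_2^{[p]}(A)=1$. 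Hence $\tau$ induces a homomorphism $\gr_1^{[p]}(\tau)\colon\gr_1^{[p]}(G)=G/\gamma_2^{[p]}(G)\to A$, with the rationalised version when $p=0$. In the pro-$p$ case the same argument applies to the closures $\cl(\gamma_2^{[p]}(G))$, because $\tau$ is continuous and $A$ is Hausdorff.

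Next, the identity $\iid_{\gr_1^{[p]}(H)}=\gr_1^{[p]}(\tau)\circ\gr_1^{[p]}(\iota)$ says that $\gr_1^{[p]}(\iota)$ has a left inverse, so it is injective. Concretely, if $h\in H$ satisfies $h\in\gamma_2^{[p]}(G)$, then $\tau(h)=1$. But $\tau(h)$ is the class of $h$ in $A$, so $h\in\gamma_2^{[p]}(H)$. For $p=0$ this argument is carried out after tensoring with $\Q$.

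The only point needing care, which I expect to be the mild obstacle, is this identification of $\gr_1^{[p]}(\tau)$ together with the correct reading of the $p=0$ case. There $\gr_1^{[0]}$ means $(G/\gamma_2^{[0]}(G))\otimes\Q$, and injectivity on the rationalisations follows directly from the existence of the left inverse. Once injectivity in degree one is established, Proposition~\ref{prop:help} applies verbatim and completes the proof.
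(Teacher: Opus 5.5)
Your proposal is correct and follows the paper's route: the paper deduces Proposition~\ref{prop:crit} directly from Proposition~\ref{prop:help}. Hypothesis (i) supplies a left inverse for $\gr_1^{[p]}(\iota)$, which makes it injective, exactly as you argue. Your identification of $\gr_1^{[p]}(\tau)$ via $\gamma_2^{[p]}\bigl(H/\gamma_2^{[p]}(H)\bigr)=1$, and your treatment of the rational case $p=0$, supply details the paper leaves implicit.
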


\section{An alternative proof of residual torsion free nilpotence of free products amalgamated along retracts}\label{sec:rtfn}

Considering the case when $p=0$, Theorem \ref{teo:strictly} implies that the free amalgamated product of residually torsion free nilpotent groups along a strictly embedded common subgroup is also residually torsion free nilpotent. In particular, this is the case when the amalgamated subgroup is a retract. In this section we provide a purely group theoretical direct proof of this fact. In the literature there are similar results for amalgamations along retracts, mainly for residual finiteness or in the  prime case \cite{BolerEvans}, \cite{Sokolov}. See also Theorem A in \cite{CapraceMarquis} where residual torsion free nilpotence is considered assuming that the groups $G_1$ and $G_2$ are torsion free nilpotent and the amalgamated subgroup $H$ is abelian.

A classical result by Philip Hall says that if $G$ is a group with a normal subgroup $K$ such that $K$ and $G/[K,K]$ are nilpotent, then so is $G$. This has been extended by Baumslag who shows

\begin{lemma} \cite[Proposition 1]{BM} \label{lemma_split}
    Let $G$ be a group with a residually torsion free nilpotent normal subgroup $K$. Assume that the quotient $G/K$ is torsion free nilpotent and that $G/[K,K]$ is nilpotent. Then $G$ is residually torsion free nilpotent.
\end{lemma}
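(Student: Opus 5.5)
The plan is to reduce the statement to the classical theorem of P.\,Hall quoted just before it, applied to suitable quotients of $G$. For $n\geq 1$ put $K_n=\gamma_n^{[0]}(K)$, the isolator of $\gamma_n(K)$ in $K$ (Definition~\ref{def:gamma}). I would check three facts about $K_n$.
\begin{itemize}
\item[(a)] $K_n$ is a subgroup. Its image in the nilpotent group $K/\gamma_n(K)$ is the set of torsion elements, and in a nilpotent group the torsion elements form a subgroup.
\item[(b)] $K_n$ is characteristic in $K$, because it is defined intrinsically from $\gamma_n(K)$. Since $K\trianglelefteq G$, this gives $K_n\trianglelefteq G$.
\item[(c)] $K/K_n$ is torsion free nilpotent, as noted in Section~\ref{sec:dcs}, and $\bigcap_n K_n=1$ because $K$ is residually torsion free nilpotent, i.e.\ residually $[0]$.
\end{itemize}

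The main step is to show that each quotient $G/K_n$ is torsion free nilpotent; the conclusion then follows at once. For nilpotence I apply Hall's theorem to $\bar G=G/K_n$ with normal subgroup $\bar K=K/K_n$. This $\bar K$ is nilpotent by (c), and
\[
\bar G/[\bar K,\bar K]\cong G/[K,K]K_n ,
\]
which is a quotient of the nilpotent group $G/[K,K]$, hence nilpotent. So Hall's theorem gives that $G/K_n$ is nilpotent.

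For torsion freeness, suppose $g^m\in K_n$ with $m>0$. Then the image of $g$ in $G/K$ is torsion, so $g\in K$ because $G/K$ is torsion free. Now $g\in K$ with $g^m\in K_n$, and since $K_n$ is isolated in $K$ by construction, $g\in K_n$. Thus $G/K_n$ is torsion free nilpotent.

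Finally, given $1\neq g\in G$, there are two cases. If $g\notin K$, the quotient $G\to G/K$ is torsion free nilpotent and separates $g$ from $1$. If $g\in K$, then by (c) there is an $n$ with $g\notin K_n$, and the quotient $G\to G/K_n$ separates $g$. Hence $G$ is residually torsion free nilpotent.

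The only point needing care is (a), that the isolator is a subgroup, which is the standard fact about torsion in nilpotent groups. The rest consists of direct verifications, so I expect no serious obstacle beyond correctly invoking Hall's theorem for the quotients $G/K_n$.
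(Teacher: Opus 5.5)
The paper does not prove this lemma; it quotes it from \cite{BM} as an extension of P.~Hall's nilpotency criterion, so there is no in-paper argument to compare against. Your proof is correct and self-contained, and it shows exactly how the lemma reduces to Hall's theorem.

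The individual steps all hold:
\begin{itemize}
\item The isolators $K_n=\gamma_n^{[0]}(K)$ are characteristic in $K$, hence normal in $G$.
\item Hall's theorem applies to $K/K_n\trianglelefteq G/K_n$. Indeed $[\bar K,\bar K]=[K,K]K_n/K_n$, so $\bar G/[\bar K,\bar K]\cong G/[K,K]K_n$ is a quotient of the nilpotent group $G/[K,K]$.
\item Torsion-freeness of $G/K_n$ follows in two steps: torsion-freeness of $G/K$ puts $g$ in $K$, and isolation of $K_n$ in $K$ then puts $g$ in $K_n$.
\item $\bigcap_n K_n=1$ is precisely residual torsion-free nilpotence of $K$.
\end{itemize}

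Your argument also shows that the hypothesis ``$G/K$ nilpotent'' is redundant. It follows automatically because $G/K$ is a quotient of $G/[K,K]$, and only the torsion-freeness of $G/K$ is actually used.
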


Using this we obtain

\begin{lemma}\label{lemma_nilpotent}
     Let $G_1$, ${G_2}$ be torsion free nilpotent groups with a common retract $H$. Then their free amalgamated product $G=G_1\ast_H {G_2}$ is residually torsion free nilpotent.
\end{lemma}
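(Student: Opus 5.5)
We plan to apply Lemma~\ref{lemma_split} to $G=G_1\ast_H G_2$ with $K$ the kernel of a natural map to a torsion free nilpotent group. Let $\pi_i\colon G_i\to H$ be the retractions and $K_i=\ker\pi_i$, so $G_i=H\ltimes K_i$. Since $\pi_1|_H=\pi_2|_H=\iid_H$, the universal property of the amalgam gives a retraction $\pi\colon G\to H$, and we put $K=\ker\pi$. Then $G/K\cong H$, which is torsion free nilpotent as a subgroup of $G_1$.

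The first step is to identify $K$. By Bass--Serre theory (or the Kurosh-type subgroup theorem for amalgams), $K$ meets every conjugate of $H$ trivially, so it acts freely on the edges of the Bass--Serre tree. Hence $K$ is the free product of a free group and of the vertex stabilizers $K\cap gG_ig^{-1}$. Because $G=HK$ and $K\cap G_i=K_i$, these stabilizers are conjugates of $K_1$ and $K_2$. More concretely, I expect $K=\ast_{h\in H}\bigl(hK_1h^{-1}\ast hK_2h^{-1}\bigr)$, with no free part, since the quotient graph $K\backslash T$ has $G\backslash T$ as a quotient and should be a tree; in any case the free part is harmless. Each $K_i$ is a subgroup of a torsion free nilpotent group, hence torsion free nilpotent. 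A free product of torsion free nilpotent groups (together with a free group) is residually torsion free nilpotent, by Lichtman's theorem or by the $p=0$ case of the Theorem~\ref{thm:amalgamated} with trivial amalgamation. So $K$ is residually torsion free nilpotent.

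The main obstacle is the remaining hypothesis, that $G/[K,K]$ is nilpotent. Here $K/[K,K]=\bigoplus_{h\in H}\bigl(K_1^{ab}\oplus K_2^{ab}\bigr)^h$ (plus possibly a free abelian summand), and $G/[K,K]\cong H\ltimes K^{ab}$. This is an infinite direct sum permuted by $H$, which is not nilpotent in general: the conjugates $hK_1h^{-1}$ and $K_1$ are distinct free factors, so $[K_1,h]$ is not small modulo $[K,K]$. So one cannot apply Lemma~\ref{lemma_split} to this $K$ directly. Instead we choose a better normal subgroup. Let $c$ be the nilpotency classes involved. We would try to prove directly that for each $n$ the quotient $G/\gamma_n(G)$ embeds in a torsion free nilpotent group where elements survive. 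Alternatively, we apply Lemma~\ref{lemma_split} to $M=\langle\langle K_1\rangle\rangle$, using the map $G\to G_2$ that kills $K_1$. Then $G/M\cong G_2$ is torsion free nilpotent, and $M$ is the free product of the conjugates $gK_1g^{-1}$ with $g$ ranging over $G_2$-coset representatives, so $M$ is residually torsion free nilpotent as before. The extension $G_2\ltimes M^{ab}$ has $M^{ab}=\bigoplus_{g\in K_2}(K_1^{ab})^g$ as a $G_2$-module. Here $K_2$ permutes the summands and $H$ acts nilpotently on $K_1^{ab}$. The module is thus induced from $H$ to $G_2=H\ltimes K_2$, i.e.\ $K_1^{ab}\otimes\Z[K_2]$ with the augmentation-type filtration. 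This is nilpotent only after truncation, so nilpotence again fails on the nose.

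So the plan is to combine the two steps with a residual argument. Given $1\neq g\in G$, first pass to quotients $G_i/\gamma_n(G_i)^{\text{iso}}$ where the normal form letters of $g$ survive; retracts are preserved by \eqref{eq:retract}. Then, in $G_2\ltimes M^{ab}$, replace $M^{ab}$ by $K_1^{ab}\otimes\Z[K_2]/\omega^N$ for large $N$. This quotient is a torsion free module on which $G_2$ acts unipotently, because $K_2$ is torsion free nilpotent and $\Z K_2/\omega^N$ is torsion free by Jennings/Hall theory. Lemma~\ref{lemma_split} is then applied to the corresponding quotient of $G$. I expect the verification that $g$ survives in this quotient, via the normal form, to be the technical heart of the argument.
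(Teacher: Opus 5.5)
Your first paragraph sets things up exactly as the paper does ($K=\ker\pi$, $G/K\cong H$, and $K$ residually torsion free nilpotent). You then misidentify $K$, and that mistake creates the ``main obstacle'' that sends the rest of the proposal off course.

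\textbf{The error in computing $K$.} Since $K_i\trianglelefteq G_i$ and $H\le G_i$, one has $hK_ih^{-1}=K_i$ for all $h\in H$. So the $H$-conjugates of $K_1$ are not distinct free factors; they all coincide with $K_1$. In fact $\langle K_1,K_2\rangle$ is already normal in $G$: for $g_1=hk_1\in G_1$ one has $K_1^{g_1}=K_1$ and $K_2^{g_1}=K_2^{k_1}\subseteq\langle K_1,K_2\rangle$ because $K_2^h=K_2$, and symmetrically for $G_2$. Since $K_i\cap H=1$, the normal form theorem then gives $\langle K_1,K_2\rangle\cong K_1\ast K_2$.

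In Bass--Serre language, the $K$-orbits of edges are $K\backslash G/H$ and the $K$-orbits of type-$i$ vertices are $K\backslash G/G_i\cong H/\pi(G_i)$. Both are singletons, because $G=KH$ and $\pi(G_i)=H$. So $K\backslash T$ is a single edge and $K=K_1\ast K_2$. Your formula $\ast_{h\in H}(\cdots)$ would need $2|H|$ vertex orbits joined by a single edge orbit, which is impossible for $H\neq 1$. You appear to have computed as if $\pi(G_i)$ were trivial.

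\textbf{What the correct $K$ gives.} With $K=K_1\ast K_2$, the obstacle disappears. One has $K/[K,K]=K_1^{ab}\oplus K_2^{ab}$, so $G/[K,K]\cong H\ltimes(K_1^{ab}\oplus K_2^{ab})$. This group embeds, diagonally on $H$, into the nilpotent group $G_1/[K_1,K_1]\times G_2/[K_2,K_2]$. Hence Lemma~\ref{lemma_split} applies to $K$ directly, and this is precisely the paper's proof.

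\textbf{The fallback does not close the gap.} You never say which normal subgroup of which quotient of $G$ Lemma~\ref{lemma_split} is applied to. Nor do you verify its hypotheses for that subgroup: residual torsion free nilpotence of the subgroup, and nilpotence of the quotient by its derived subgroup. The torsion-freeness of $\Z K_2/\omega^N$ is only asserted, and you leave the survival of $g$ explicitly open. As written the proposal has a gap, but it is closed simply by computing $\ker\pi$ correctly.
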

\begin{proof} As $H$ is a retract in both $G_1$ and ${G_2}$   the kernels of the projections $G_1\to H$ and ${G_2}\to H$
 are normal subgroups such that ${G_1} = H \ltimes K_{1}$ and ${G_2}=H \ltimes K_{2}$. As ${G_1}$ and ${G_2}$ are torsion free nilpotent, so are $K_{1}$ and $K_{2}$ so
the group $K=K_{1} \ast K_{2}$ is residually torsion free nilpotent by  \cite{Malcev}. Moreover, it is a standard fact that  $G$ also decomposes as a semidirect product $G=H \ltimes K$ (this can be easily seen by checking their presentations). Moreover $H=G/K$ is torsion free nilpotent. We claim that the quotient $G/[K,K]$ is nilpotent. To see it, note that 
$$K/[K,K]=K_{1}/[K_{1},K_1]\times K_{2}/[K_{2},K_2]$$
and, as ${G_1}$ and ${G_2}$ are nilpotent, so are ${G_1}/[K_{i},K_i]=H\ltimes K_{i}/[K_{i},K_i]$ for $i=1,2$. The direct product of these two groups is also nilpotent and 
$G/[K,K]=H\ltimes K/[K,K]$ embeds into $${G_1}/[K_1,K_1]\times{G_2}/[K_2,K_2]=(H\times H)\ltimes K/[K,K]$$ via the diagonal map in the first component so it is also nilpotent. Hence Lemma \ref{lemma_split} yields the claim. \end{proof}

\begin{theorem}\label{thm:restfreenilp}
     Let ${G_1}$, ${G_2}$ be residually torsion free nilpotent groups with a common retract $H$. Then their free product with amalgamation $G={G_1}\ast_H {G_2}$ is residually torsion free nilpotent.
\end{theorem}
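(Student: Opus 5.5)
The plan is to reduce to Lemma~\ref{lemma_nilpotent} by passing to torsion free nilpotent quotients of $G_1$ and $G_2$ that are compatible with the retractions onto $H$. Let $\pi_i\colon G_i\to H$ be the retractions, with kernels $K_i$, so that $G_i=H\ltimes K_i$. Take $1\neq g\in G$. Since $G$ is generated by $G_1$ and $G_2$, we write $g$ in reduced normal form $g=g_0g_1\cdots g_k$ with the factors alternately in $G_1\setminus H$ and $G_2\setminus H$, or $g\in H$ if $k=0$. The aim is to find normal subgroups $N_i\trianglelefteq G_i$ with three properties: $G_i/N_i$ is torsion free nilpotent; $\pi_i(N_i)\subseteq N_i$ and $N_1\cap H=N_2\cap H$; and every factor $g_j$ lying outside $H$ still lies outside $H N_i$.

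\textbf{Step 1: compatible normal subgroups.} We choose $n$ large enough that $g_j\notin H\gamma_n^{[0]}(G_i)$ for every factor $g_j\in G_i\setminus H$, and, when $g\in H$, also $g\notin\gamma_n^{[0]}(H)$. Such an $n$ exists because $H\gamma_n^{[0]}(G_i)$ is exactly the preimage of $\pi_i(g_j)^{-1}$-type conditions: $g_j\in H\gamma_n^{[0]}(G_i)$ iff $\pi_i(g_j)^{-1}g_j\in\gamma_n^{[0]}(G_i)\cap K_i$ by \eqref{eq:retract}. Residual torsion free nilpotence gives $\bigcap_n\gamma_n^{[0]}(G_i)=1$, and $\pi_i(g_j)^{-1}g_j\neq 1$ because $g_j\notin H$. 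We then set $N_i=\gamma_n^{[0]}(G_i)$. By \eqref{eq:retract}, $N_i=\gamma_n^{[0]}(H)(N_i\cap K_i)$, so $N_i\cap H=\gamma_n^{[0]}(H)$ for both $i$, and $\pi_i(N_i)=\gamma_n^{[0]}(H)\subseteq N_i$.

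\textbf{Step 2: the quotient amalgam.} Put $\bar G_i=G_i/N_i$ and $\bar H=H/\gamma_n^{[0]}(H)$. Then $\bar H$ embeds in both $\bar G_i$, and $\pi_i$ descends to a retraction $\bar G_i\to\bar H$, so $\bar H$ is a common retract of the torsion free nilpotent groups $\bar G_i$. The maps $G_i\to\bar G_i$ agree on $H$, so they induce $\theta\colon G\to\bar G=\bar G_1\ast_{\bar H}\bar G_2$. By Step 1, the image of $g$ is either a reduced word of length $k+1$ whose factors avoid $\bar H$, or, if $g\in H$, a nontrivial element of $\bar H$. By the normal form theorem for amalgamated products, $\theta(g)\neq1$.

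\textbf{Step 3: conclusion.} Lemma~\ref{lemma_nilpotent} makes $\bar G$ residually torsion free nilpotent, so some torsion free nilpotent quotient of $\bar G$ separates $\theta(g)$ from $1$. Composing with $\theta$ gives such a quotient of $G$ separating $g$ from $1$, which proves the theorem.

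The main obstacle is Step 1: the normal subgroups must be compatible on $H$, and the factors $g_j$ must stay outside the amalgamated subgroup. Both points rely on the retract identity \eqref{eq:retract}. One further detail has to be checked there, namely that $\bar G_i$ is torsion free. This holds because $G_i/\gamma_n^{[0]}(G_i)$ is torsion free nilpotent, as noted in Section~\ref{sec:dcs}.
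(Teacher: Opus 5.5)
Your proof is correct and is essentially the paper's argument. You pass to the torsion free nilpotent quotients $G_i/\gamma_n^{[0]}(G_i)$, which still share $H/\gamma_n^{[0]}(H)$ as a common retract by \eqref{eq:retract}; you use the normal form theorem to see that $g$ survives in the quotient amalgam; and you finish with Lemma~\ref{lemma_nilpotent}. The only difference is bookkeeping: the paper first disposes of $g\notin K$ via $G/K\cong H$ and then uses the free-product normal form of $g\in K=K_1\ast K_2$, so it only needs the factors to avoid $\gamma_n^{[0]}(G_i)$, whereas you work with the amalgam normal form directly and keep the factors outside $H\gamma_n^{[0]}(G_i)$ via $g_j\in H\gamma_n^{[0]}(G_i)\iff \pi_i(g_j)^{-1}g_j\in\gamma_n^{[0]}(G_i)$.
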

\begin{proof}   Let $1\neq g \in G$. We need to find a normal subgroup $N \trianglelefteq G$ that does not contain $g$ so that $G/N$ is torsion free nilpotent. As in Lemma \ref{lemma_nilpotent}, we denote by $K_1$ and $K_2$ the kernels of the projections ${G_1}\to H$ and ${G_2}\to {G_2}$ so ${G_1} = H \ltimes K_1$ and ${G_2}=H \ltimes K_2$ and 
$G = H \ltimes K$
with $K=K_1\ast K_2$. If $g\not\in K$, then $1\neq gK\in G/K=H$ and as $H$ is residually torsion free nilpotent we can find $N\trianglelefteq G$ with $G/N$ torsion free nilpotent and $g\not\in N$. So we may assume $g\in K=K_1\ast K_2$. Therefore we can write $g$ as a product 
$g=g_0g_1 \dots g_t$ where each $g_i$ is alternatively non trivial element of $K_1$ or $K_2$. 

Now, we consider the torsion free central series $\gamma_n^{[0]}$ of the groups ${G_1}$ and ${G_2}$. As both are residually torsion free nilpotent, one has
$$1=\cap\gamma_n^{[0]}({G_1})=\cap\gamma_n^{[0]}({G_2}).$$
Therefore, we may choose an $n$ big enough so that for all the elements $g_i$ in the above decomposition of $g$, one has:
\begin{itemize}
\item if $g_i\in K_1$, then $g_i\not\in\gamma_n^{[0]}({G_1})$,
\item if $g_i\in K_2$, then $g_i\not\in\gamma_n^{[0]}({G_2})$.
\end{itemize}
Moreover, $\overline{{G_1}}={G_1}/\gamma^{[0]}_n({G_1})$ and $\overline{{G_2}}={G_2}/\gamma^{[0]}_n({G_2})$ are both torsion free nilpotent. 
The hypothesis that $H$ is a retract in both $G_1$ and $G_2$ implies
$$\gamma^{[0]}_n(H)=H\cap\gamma^{[0]}_n({G_1})=H\cap\gamma^{[0]}_n({G_2})$$
so $\overline{H}=H/\gamma^{[0]}_n(H)$
 is a retract in both $\overline{{G_1}}$ and $\overline{{G_2}}$. Consider the group $\overline{G}=\overline{{G_1}}\ast_{\overline{H}}\overline{{G_2}}$. By Lemma \ref{lemma_nilpotent} it is residually torsion free nilpotent. Moreover, the projections ${G_1}\to \overline{{G_1}}$ and ${G_2}\to\overline{{G_2}}$ restrict to the projection $H\to\overline{H}$ and there is an induced epimorphism
 $$\tau:G={G_1}\ast_H{G_2}\to\overline{G}=\overline{{G_1}}\ast_{\overline{H}}\overline{{G_2}}.$$
Then $\tau(g)=\tau(g_1) \dots\tau(g_t)$ and the choice of $n$ implies that for each $i$, $\tau(g_i)$ lies either in $\tau(K_1)$ or in $\tau(K_2)$ but it is non-trivial. Therefore, $\tau(g_i)\not\in\overline{H}$ so for the normal form theorem for free amalgamated products, we get $\tau(g)\neq 1$. As $\overline{G}$ is residually torsion free nilpotent, this means that we can find $\overline{N}\trianglelefteq\overline{G}$ such that $\tau(g)\not\in\overline{N}$ and $\overline{G}/\overline{N}$ is torsion free nilpotent. Therefore, it is enough to consider the preimage $N$ of $\overline{N}$ in $G$.
\end{proof}

\section{Almost retracts and cohomological $p$-completeness.}\label{sec:chpc}

Let $p$ be a prime. For a group $G$, let $G_p^\vee$ denote the pro-$p$ completion of $G$, i.e.
\begin{equation}\label{eq:propcom}
{G}_p^\vee=\varprojlim_{U\trianglelefteq_p G} G/U,
\end{equation}
where $U\trianglelefteq_p G$ means that $U$ is an open normal subgroup of $p$-power index in $G$.  Assume that $G$ is finitely generated and residually $p$-finite. Then there is an injection
\begin{equation}
\label{eq:propcom2}
G\hookrightarrow G_p^\vee
\end{equation}
that induces a map
$$H^*_{\mathrm{cont}}(G_p^\vee,\F_p)\to H^*(G,\F_p).$$

\begin{definition} We say that a group $G$ is \emph{cohomologically $p$-complete}
if it is finitely generated, residually $p$-finite, and the map $H^m_{\mathrm{cont}}(G_p^\vee,\F_p)\to H^m(G,\F_p)$ is an isomorphism for all positive integers $m$.
\end{definition}

In the setting of \cite[\S1.2.6]{Serre} cohomologically $p$-complete groups coincide with pro-$p$ \emph{good} groups, i.e.,
discrete groups $G$ for which the pro-$p$ completion 
\eqref{eq:propcom2} is good.
The group $G_p^\vee$ is in fact the completion of $G$ with respect to the pro-$p$ topology, which is the topology having the set of normal subgroups $T\trianglelefteq G$ with $G/T$ a finite $p$-group as fundamental  system of neighborhoods of the identity. 

In \cite{AF}, Aschenbrebber and Friedl give conditions on a graph of finitely generated groups $\mathcal{G}$ that imply that the fundamental group $\pi(\mathcal{G})$ is cohomologically $p$-complete. The next result is a direct consequence of their results but for completeness we include a proof of the part of the argument which is not stated so explicitly in \cite{AF}. 

We will need the following notation. Let $\mathcal{G}$ be a graph of groups with underlying graph $Y$ and assume that the vertex and edge groups are strictly $p$-embedded in its fundamental group $G=\pi_1(\mathcal{G})$. We will denote vertex groups by $G_v,G_e$ where $v$ is a vertex and $e$ an edge of $Y$ and if $v$ is the endpoint of $e$, we denote by $f_e$ the map $f_e\colon G_e\to G_v$.
For any $n\geq 1$, let $\mathcal{G}_n$ be the graph of groups with the same underlying graph as $\mathcal{G}$ but with vertex groups $G_v/\gamma_n^{[p]}(G_v)$ where $G_v$ is a vertex group of $\mathcal{G}$ and edge groups $G_e/\gamma_n^{[p]}(G_e)$ where $G_e$ is a vertex group of $\mathcal{G}$. As vertex and edge groups are strictly $p$-embedded in $G$ we have
$$\gamma_n^{[p]}(G)\cap G_v=\gamma_n^{[p]}(G_v)$$
and
$$\gamma_n^{[p]}(G)\cap G_e=\gamma_n^{[p]}(G_e)$$
and from this one gets the compatibility conditions needed for the graph of groups $\mathcal{G}_n$ to be well defined. Moreover, there is a canonical epimorphism
$$\tau_n:\pi_1(\mathcal{G})\to\pi(\mathcal{G}_n).$$

\begin{theorem}\cite[Corollary 3.8, Corollary 5.12]{AF}\label{thm:efficient} Let $\mathcal{G}$ be a graph of finitely generated groups and assume that:
\begin{itemize}
\item[\textup{(i)}] vertex and edge groups are strictly $p$-embedded in $G=\pi_1(\mathcal{G})$,

\item[\textup{(ii)}] edge groups are closed in the pro-$p$ topology of the vertex groups.

\item[\textup{(iii)}] for any $n\geq 1$, $G=\pi_1(\mathcal{G}_n)$ is residually $p$-finite.
\end{itemize}
Then the vertex groups are closed in the pro-$p$ topology of $G$. If moreover
\begin{itemize}
\item[\textup{(iv)}] vertex and edge groups are cohomologically $p$-complete, 
\end{itemize}
then $G$ is cohomologically $p$-complete.
\end{theorem}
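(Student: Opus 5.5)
The plan is to verify that $\mathcal{G}$ is \emph{$p$-efficient} in the sense of \cite{AF}, and then to quote the cohomological statement \cite[Corollary 5.12]{AF}. Being $p$-efficient means three things: $G$ is residually $p$-finite, the vertex and edge groups are closed in the pro-$p$ topology of $G$, and $G$ induces on each of them its full pro-$p$ topology. The only part needing a real argument is the closedness of the vertex groups; residual $p$-finiteness of $G$ comes out of the same argument, with the trivial subgroup in place of $G_v$.

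\emph{Step 1 (induced topologies).} Each $G_v$ and $G_e$ is finitely generated. Hence the subgroups $\gamma_n^{[p]}(G_v)$ have finite $p$-power index and form a base of neighbourhoods of $1$ for the pro-$p$ topology of $G_v$, and similarly for $G_e$. The same holds for $G$, which is finitely generated when $Y$ is finite. By (i), $\gamma_n^{[p]}(G)\cap G_v=\gamma_n^{[p]}(G_v)$. So the pro-$p$ topology of $G$ restricts to the full pro-$p$ topology on each vertex and edge group. The same identities show that the canonical map $\tau_n\colon G\to\pi_1(\mathcal{G}_n)$ restricts on $G_v$ to the quotient map $G_v\to G_v/\gamma_n^{[p]}(G_v)$, which is a finite $p$-group.

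\emph{Step 2 (separating an element from a vertex group).} Let $g\in G\setminus G_v$ and write $g$ in reduced (Britton/Serre) normal form relative to $\mathcal{G}$. The condition that the form is reduced involves only finitely many syllables $g_i$, each with $g_i\in G_w\setminus f_e(G_e)$ for the appropriate vertex $w$ and edge $e$. By (ii), $f_e(G_e)$ is closed in the pro-$p$ topology of $G_w$. Using the base from Step 1, we choose a single $n$ with $g_i\notin f_e(G_e)\gamma_n^{[p]}(G_w)$ for all these finitely many $i$.

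Then $\tau_n(g)$ is still written as a reduced word in $\mathcal{G}_n$. Here one has to check that the image of $f_e(G_e)$ in $G_w/\gamma_n^{[p]}(G_w)$ is exactly the edge group $G_e/\gamma_n^{[p]}(G_e)$ of $\mathcal{G}_n$; this is again the strict-embedding identity of Step 1. By the normal form theorem, $\tau_n(g)\notin\tau_n(G_v)$.

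By (iii), $\pi_1(\mathcal{G}_n)$ is residually $p$-finite. The subgroup $\tau_n(G_v)$ is finite, so it can be separated from $\tau_n(g)$ by a normal subgroup of $p$-power index: separate $\tau_n(g)s^{-1}$ from $1$ for each of the finitely many $s\in\tau_n(G_v)$ and intersect the resulting subgroups. Pulling this subgroup back along $\tau_n$ separates $g$ from $G_v$, so $G_v$ is closed in the pro-$p$ topology of $G$.

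Taking $g\neq1$ and separating it from the trivial subgroup in the same way gives residual $p$-finiteness of $G$. Edge groups are then closed in $G$, being closed in vertex groups which are closed in $G$, and carrying the induced topology by Step 1.

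\emph{Step 3.} With $p$-efficiency established, hypothesis (iv) together with \cite[Corollary 5.12]{AF} gives that $G$ is cohomologically $p$-complete. That corollary rests on the Mayer–Vietoris comparison between $\mathcal{G}$ and its pro-$p$ completion, which is exact precisely in the efficient situation.

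The main obstacle is Step 2: making sure that passing to $\mathcal{G}_n$ preserves reducedness. This needs both the compatibility of edge-group images provided by (i) and a uniform choice of $n$, which is available because only finitely many syllables occur.
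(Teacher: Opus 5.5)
Your argument for the closedness of the vertex groups is the paper's. You write $g\notin G_v$ in reduced form and use (ii) to choose a single $n$ such that $\tau_n(g)$ remains reduced in $\pi_1(\mathcal{G}_n)$. You then separate $\tau_n(g)$ from the finite subgroup $\tau_n(G_v)$ in the residually $p$-finite group $\pi_1(\mathcal{G}_n)$ and pull back along $\tau_n$. For that separation you intersect finitely many normal subgroups of $p$-power index, each separating some $\tau_n(g)s^{-1}$ from $1$. This is a slightly more direct substitute for the paper's $\gamma_r^{[p]}$-argument (taken from \cite[Lemma 3.10]{AF}). Like the paper, you defer the cohomological statement to \cite[Corollary 5.12]{AF}, only making the $p$-efficiency check explicit.

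One claim needs correcting. Residual $p$-finiteness of $G$ does \emph{not} follow from (i)--(iii) by ``the same argument with the trivial subgroup in place of $G_v$''. Take $1\neq g\in G_{v_0}$. Its reduced form has length $0$, and it stays reduced after $\tau_n$ only if $g\notin\gamma_n^{[p]}(G_{v_0})$ for some $n$. That is residual $p$-finiteness of the vertex group, which (ii) does not provide.

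For a concrete counterexample, let $q\neq p$ be a prime and view $G=\Z/q\times\Z$ as the HNN-extension of $G_v=G_e=\Z/q$ with trivial automorphism. Since $\gamma_n^{[p]}(\Z/q)=\Z/q$, hypotheses (i)--(iii) hold, with $\pi_1(\mathcal{G}_n)\cong\Z$. Also $G_v$ is closed in $G$, as the theorem asserts. Yet $G$ is not residually $p$-finite.

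The fix is to take residual $p$-finiteness of the vertex groups from (iv), since it is part of the paper's definition of cohomological $p$-completeness. You only use residual $p$-finiteness of $G$ in Step 3, where (iv) is in force, so with this remark your proof goes through.
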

\begin{proof} We prove the first claim. The fact that (i)-(iv)   imply the cohomologically $p$-completeness
of $G$
is \cite[Corollary 5.12]{AF}.

Let $G_v$ be a vertex group and take $g\in G-G_v$. We are going to show that there is some normal subgroup $K\trianglelefteq G$ with $G/K$ a finite $p$-group such that $g\not\in G_vK$, this will imply the claim. 

Without lost of generality we may take the vertex $v$ as the base point $v_0$ in the construction of the fundamental group $\pi(\mathcal{G})$. Then, the element $g$ can be represented by a reduced path in the underlying graph $Y$ of $\mathcal{G}$:
$$g=g_0e_1g_1e_2\ldots e_kg_k$$
where  the $e_i$ form a closed path in $Y$ starting and ending in $v=v_0$, $g_0\in G_{v}$, and for each $1\leq i\leq k$, if  $v_i$ is the endpoint of $e_i$,  $g_{i}\in G_{v_{i}}$ (we are using the same notation as in \cite[Section 1.2.2]{AF}). Moreover, the fact that the path is reduced means that either $n=0$ and $g_0\neq 1$ or $n>0$ and for any $i$ such that $e_{i+1}=\overline{e}_i$, we have $g_i\not\in f_{e_i}(G_{e_i})$. Observe that the condition $g\not\in G_v$ is equivalent to $k>1$ in this reduced path.

We claim that conditions (i) and (ii) imply that there is some $n$ such that $\tau_n(g)\not\in G_v/\gamma_n^{[p]}(G_v)$. We need to show that we may choose an $n$ so that the path
 $$\overline{g}_0e_1\overline{g}_1e_2\ldots e_k\overline{g}_k$$
 is still reduced where $\overline{g}_i$ is the image of $g_i$ under the projection map $G_{v_i}\to G_{v_i}/\gamma_n^{[p]}(G_{v_i})$. For each $1\leq i\leq k$ and each $n$, (i) implies that we have a commutative diagram
$$
\begin{tikzcd}
G_{e_i}\arrow[r,"f_{e_i}"]\arrow[d,two heads]&G_{v_i}\arrow[d,two heads]\\
G_{e_i}/\gamma_n^{[p]}(G_{e_i})\arrow[r,"\overline{f}_{e_i}"]&G_{v_i}/\gamma_n^{[p]}(G_{v_i})\\
\end{tikzcd}
$$
and the image of $G_{e_i}/\gamma_n^{[p]}(G_{e_i})$ under $\overline{f}_{e_i}$ is $f_{e_i}(G_{e_i})\gamma_n^{[p]}(G_{v_i})/\gamma_n^{[p]}(G_{v_i})$. Condition (ii) implies that we may choose $n$ big enough so that 
$$g_i\not\in f_{e_i}(G_{e_i})\gamma_n^{[p]}(G_{v_i})$$
for all the indices $i$, where it is required.  

Now we are going to use the following observation (\cite[Lemma 3.10]{AF}). If $L$ is residually $p$-finite and $S\leq L$ is a finite subgroup, then for any $x\in L-S$ there is a normal subgroup $\hat{K}\triangleleft L$ with $L/\hat{K}$ a finite $p$-group such that $x\not\in S\hat{K}$. 
As $S$ is finite and $L$ residually $p$-finite, there is some $r$ such that $S\cap\gamma^{[p]}_r(L)=1$. If $x\not\in S\gamma^{[p]}_r(L)$, then we only have to take $\hat{K}=\gamma^{[p]}_r(L)$. So we assume $x\in S\gamma^{[p]}_r(L)$. Then we may write $x=sy$ with $s\in S$, $y\in\gamma^{[p]}_r(L)$ and as $L$ is residually $p$-finite, there is some $m>r$ with $y\not\in \gamma^{[p]}_m(L)$. So $x=sy\not\in S\gamma^{[p]}_m(L)$ and we may take $\hat{K}=\gamma^{[p]}_m(L)$.

We apply this for $L=\pi_1(\mathcal{G}_n)$ which is residually $p$-finite by (iii), $S=G_v/\gamma_n^{[p]}(G_v)$ and $x=\tau_n(g)$ and get a subgroup $\hat{K}$. Then we only have to let $K$ be the inverse image of $\hat{K}$ under $\tau_n$. As $\tau_n(g)\not\in S\hat{K}$, $g\not\in G_vK$ and $G/K\cong L/\hat{K}$ is a finite $p$-group.
\end{proof}

\begin{definition}\label{def:almostretract} Let $G$ be a residually $p$-finite group and $H\leq G$ a subgroup. We say that $H$ is an \emph{almost $p$-retract} of $G$ if
there is some $K\leq G$ such that $K\cap H=1$ and for any $n\geq 1$
\begin{equation}\label{eq:almostpretract}\gamma^{[p]}_n(G)=(\gamma^{[p]}_n(G)\cap K)\gamma^{[p]}_n(H).\end{equation}
\end{definition}

Observe that in the definition of an almost $p$-retract  we require in particular $G=KH$ (taking $n=1$). Moreover, almost $p$-retracts are strictly $p$-embedded because (\ref{eq:almostpretract}) implies 
$$\gamma^{[p]}_n(G)\cap H=\gamma^{[p]}_n(H).$$
Another useful observation is that retracts are almost $p$-retracts for any $p$ because of (\ref{eq:retract}). In fact,  in the case of almost $p$-retracts
 we are not asking  the subgroup $K$ to be normal. For example, if $G=H\ltimes K$ is an almost $p$-direct product, then  both $H$ and $K$ are almost $p$-retracts. 
 
\begin{lemma}\label{lem:proptop}  Let $H$ be an almost $p$-retract of the group $G$. Assume further that $G$ is residually $p$-finite and that $G$ and $H$ are both finitely generated. Then 
\begin{itemize}
\item[\textup{(i)}] $H$ is closed in the pro-$p$ topology of $G$ and
\item[\textup{(ii)}] the pro-$p$ topology of $G$ induces the pro-$p$ topology of $H$.
\end{itemize}
\end{lemma}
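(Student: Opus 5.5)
The plan is to work throughout with the subgroups $\gamma_n^{[p]}(G)$. Since $G$ is finitely generated, each $G/\gamma_n^{[p]}(G)$ is a finitely generated nilpotent group of bounded $p$-power exponent, hence a finite $p$-group. So every $\gamma_n^{[p]}(G)$ is open in the pro-$p$ topology of $G$. Conversely, every normal subgroup $T$ with $G/T$ a finite $p$-group contains some $\gamma_n^{[p]}(G)$, because the $p$-Zassenhaus series of a finite $p$-group reaches $1$. Hence $\{\gamma_n^{[p]}(G)\}_{n\geq1}$ is a fundamental system of neighbourhoods of $1$ for the pro-$p$ topology of $G$. The same holds for $H$, since $H$ is finitely generated.

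For \textup{(ii)}, the induced topology on $H$ then has the basis $\{H\cap\gamma_n^{[p]}(G)\}$. Almost $p$-retracts are strictly $p$-embedded, as observed after Definition~\ref{def:almostretract}, so $H\cap\gamma_n^{[p]}(G)=\gamma_n^{[p]}(H)$. By the previous paragraph these form a basis for the pro-$p$ topology of $H$, so the two topologies agree.

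For \textup{(i)}, it suffices to show $\bigcap_n H\gamma_n^{[p]}(G)=H$: a point outside $H$ is then outside some open set $H\gamma_n^{[p]}(G)$, which shows that the complement of $H$ is open. Let $g\in\bigcap_n H\gamma_n^{[p]}(G)$. I will use the almost $p$-retract decomposition with $n=1$, namely $G=KH$, to write $g=kh$ with $k\in K$ and $h\in H$. It is enough to prove $k=1$.

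For each $n$, we have $g\in H\gamma_n^{[p]}(G)$ and $\gamma_n^{[p]}(G)=(\gamma_n^{[p]}(G)\cap K)\gamma_n^{[p]}(H)$. Since $\gamma_n^{[p]}(G)$ is normal, $H\gamma_n^{[p]}(G)=\gamma_n^{[p]}(G)H=(\gamma_n^{[p]}(G)\cap K)\gamma_n^{[p]}(H)H=(\gamma_n^{[p]}(G)\cap K)H$. So $g=k_nh_n$ with $k_n\in K\cap\gamma_n^{[p]}(G)$ and $h_n\in H$. This gives $kh=k_nh_n$, hence $k_n^{-1}k=h_nh^{-1}\in K\cap H$ (note $k_n^{-1}k\in K$ because $K$ is a subgroup). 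The hypothesis $K\cap H=1$ then forces $k=k_n\in\gamma_n^{[p]}(G)$ for every $n$. Because $G$ is residually $p$-finite and finitely generated, $\bigcap_n\gamma_n^{[p]}(G)=1$, so $k=1$ and $g=h\in H$.

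The only delicate point is the factorisation step: the decomposition $G=KH$ is not a semidirect product, since $K$ need not be normal. Uniqueness of the factorisation comes only from $K\cap H=1$, and one must make sure the products are arranged on the correct sides. The normality of $\gamma_n^{[p]}(G)$ is what lets us move $H$ past $\gamma_n^{[p]}(G)$ and get both factorisations in the form $KH$.
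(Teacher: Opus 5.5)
Your proof is correct and follows the paper's route. Part (ii) comes from strict $p$-embeddedness, $H\cap\gamma_n^{[p]}(G)=\gamma_n^{[p]}(H)$, and part (i) from $\bigcap_n H\gamma_n^{[p]}(G)=H$, via the factorisation $g=kh$ coming from $G=KH$, $K\cap H=1$, together with $H\gamma_n^{[p]}(G)=(\gamma_n^{[p]}(G)\cap K)H$. You also make explicit two points the paper leaves implicit: that the $\gamma_n^{[p]}(G)$ form a neighbourhood basis of $1$ because $G$ is finitely generated, and why the $KH$-factorisation is unique even though $K$ need not be normal.
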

\begin{proof} Item (ii) is obvious because as remarked above the condition $\gamma_n^{[p]}(G)\cap H=\gamma_n^{[p]}(H)$ follows from the definition of almost $p$-retract. For item (ii) we check that $H=\cap_n(\gamma_n^{[p]}(G)H)$. Let $g\in\cap_n(\gamma_n^{[p]}(G)H)$, then as $G=KH$ with $K\cap H=1$ we may write $g=kh$ where $k\in K$, $h\in H$ and $h,k$ are uniquely determined. We also have
$$\gamma^{[p]}_n(G)=(\gamma^{[p]}_n(G)\cap K)\gamma^{[p]}_n(H)$$
for every $n\geq 1$ so
$$\gamma^{[p]}_n(G)H=(\gamma^{[p]}_n(G)\cap K)H$$
so $g\in\gamma^{[p]}_n(G)H$ implies $k\in\gamma^{[p]}_n(G)\cap K$ for any $n$ and as $G$ is residually $p$-finite this implies $k=1$. Therefore $g=h\in H$ and we get
$$H=\cap_n(\gamma_n^{[p]}(G)H)$$
as we wanted to prove.
\end{proof}

Finally, we are able to prove Theorem \ref{thm:pcomplete}.

\medskip

\noindent\emph{Proof of Theorem \ref{thm:pcomplete}.} By definition of almost $p$-retract, the edge groups are strictly $p$-embedded in the vertex groups, moreover this condition is also true for each $n\geq 1$ for the graph of groups $\mathcal{G}_n$ where $G=\pi_1(\mathcal{G})$. Therefore Theorem \ref{teo:strictly} implies that each $\pi_1(\mathcal{G}_n)$ is residually $p$-finite. We also deduce from Theorem \ref{teo:strictly}  that the vertex groups are strictly $p$-embedded in $G$. So conditions (i) and (iii) in Theorem \ref{thm:efficient} hold. Lemma \ref{lem:proptop} implies that also (ii) holds and (iv) is part of the hypothesis. \qed

\section{An example: The 2-Zassenhauss algebra of twisted RAAGs}
\label{s:TRAAG}

A \emph{mixed graph} $\Gamma=(\euV,\euE,\euA)$ is a simplicial graph where some of the edges are oriented and some are not. We say that an unoriented edge is an \emph{edge} and denote the set of all edges by $\euE$, and  that an oriented edge  is an \emph{arrow} and denote the set of all arrows by $\euA$.
We may consider the set of edges as a subset
of $\euP_2(\euV)$, the set of all subsets of $\euV$
of cardinality $2$. While the set of arrows may be considered as a subset of $\euV\times\euV$. 
In order to avoid complications, we also assume that for any arrow $\eua=(u,v)$, one has 
$\{u,v\}\not\in\euE$ and $u\not=v$.

A mixed graph $\Gamma$ without arrows will be called a na\"ive graph. Every mixed graph 
$\Gamma=(\euV,\euE,\euA)$ defines a na\"ive graph
$\ddot{\Gamma}=(\euV,\ddot{\euE})$,
where 
$\ddot{\euE}=\euE\sqcup\{\,\{u,v\}\mid (u,v)\in\euA\,\}$.
Associated to a mixed graph $\Gamma=(\euV,\euE,\euA)$ one may define the 
\emph{twisted right-angled Artin group} $G_\Gamma$ by
\begin{equation}
    \label{eq:defG}
G_{\Gamma}=\Bigg\langle\  \euV\  \Bigg\vert\ 
\begin{aligned}
[v,u]&=1\qquad \text{if 
$\{u,v\}\in\euE$,}\\
u^v&=u^{-1}\quad\text{if $(u,v)\in\euA$.} \\
\end{aligned}
\Bigg\rangle
\end{equation}
The twisted right-angled Artin group $G_\Gamma$ is torsion free if, and only if, there is no oriented cycle embedded in a clique \cite[Theorem 5.16]{Islam} and it is left orderable if, and only if, $\Gamma$ does not contain oriented cycles  \cite[Theorem 1.5]{ABP}.  

Following \cite{Blumer} we define the following 
restricted $\F_2$-Lie algebra associated to a mixed graph $\Gamma$ by 
$$L_{\Gamma}=\Bigg\langle\  \euV\ \Bigg\vert\ 
\begin{aligned}
[v,u]&=0\qquad \text{if $\{u,v\}\in\euE$,}\\
[v,u]&=u^{[2]}\quad\text{if $(u,v)\in\euA$.} \\
\end{aligned}\Bigg\rangle$$

%

A first observation  is that there is an epimorphism

\begin{equation}\label{eq:epi}\rho:L_{\Gamma}\to\gr^{[2]}_\bullet(G_{\Gamma})\end{equation}
induced by the obvious identification between generators of $G_{\Gamma}$ and of $L_{\Gamma}$.
To see it, note that if $u,v$ are linked by an edge in $\Gamma$, then $[u,v]=1$ so the corresponding elements $\overline{u},\overline{v}$ in $\gr^{[p]}_\bullet(G_{\Gamma})$ satisfy $[\overline{u},\overline{v}]=0$. And if there is an arrow $(u,v)$, then $u^v=u^{-1}$ so $[u,v]=u^{-2}$ and
$$[u,v]u^2=1.$$
As $[u,v],u^2\in\gamma_2^{[2]}(G_{\Gamma})$, this means that in the 2-Zassenhaus algebra one has
$$[\overline{v},\overline{u}]=\overline{u}^{[2]}.$$ 

In \cite[Question 4.3]{Blumer}, Blumer asks whether $L_{\Gamma}$ is the 2-Zassenhaus Lie algebra of $G_{\Gamma}$. In this section we answer this question affirmatively.

\begin{proposition}
\label{prop:twRAAGstrict}
    Let $G_{\Gamma}$ be a twisted right-angled Artin group. Then 
    for any induced subgraph $\Delta\subseteq\Gamma$ the group $G_{\Delta}$ is strictly 2-embedded in $G_{\Gamma}$.
\end{proposition}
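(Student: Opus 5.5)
We prove the statement by induction on $|\euV(\Gamma)\setminus\euV(\Delta)|$. Strict $2$-embeddedness is transitive: if $H\le K\le G$ with $H$ strictly $2$-embedded in $K$ and $K$ strictly $2$-embedded in $G$, then $H\cap\gamma_n^{[2]}(G)=H\cap K\cap\gamma_n^{[2]}(G)=H\cap\gamma_n^{[2]}(K)=\gamma_n^{[2]}(H)$. So it suffices to treat $\Delta=\Gamma\setminus\{v\}$, the induced subgraph on all vertices but one.

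For a vertex $v$, let $\mathrm{lk}(v)$ be the induced subgraph on the vertices joined to $v$ by an edge or an arrow, and $\mathrm{st}(v)$ the induced subgraph on $\mathrm{lk}(v)\cup\{v\}$. Comparing the presentations \eqref{eq:defG} gives
$$G_\Gamma=G_{\Gamma\setminus\{v\}}\ast_{G_{\mathrm{lk}(v)}}G_{\mathrm{st}(v)},$$
because every defining relation involving $v$ lives in $G_{\mathrm{st}(v)}$, and all other relations live in $G_{\Gamma\setminus\{v\}}$. We run a simultaneous induction on the number of vertices of $\Gamma$, proving that every induced subgraph group is strictly $2$-embedded. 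Then $G_{\mathrm{lk}(v)}$ is strictly $2$-embedded in $G_{\Gamma\setminus\{v\}}$ by induction. If we also show that $G_{\mathrm{lk}(v)}$ is strictly $2$-embedded in $G_{\mathrm{st}(v)}$, Theorem~\ref{teo:strictly} (with $p=2$) shows that the vertex group $G_{\Gamma\setminus\{v\}}$ is strictly $2$-embedded in $G_\Gamma$, which completes the inductive step.

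The main obstacle is the local statement that $H=G_{\mathrm{lk}(v)}$ is strictly $2$-embedded in $S=G_{\mathrm{st}(v)}$. Split the link into three sets: $E$, the vertices joined to $v$ by an edge; $A$, the vertices $u$ with $(u,v)\in\euA$, so $u^v=u^{-1}$; and $B$, the vertices $u$ with $(v,u)\in\euA$, so $v^u=v^{-1}$.

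If $B=\emptyset$, conjugation by $v$ preserves $H$: it inverts the generators in $A$ and fixes those in $E$. Hence $S=\langle v\rangle\ltimes H$. Since $u^{-1}\equiv u$ modulo $\gamma_2^{[2]}(H)$, this is an almost $2$-direct product, and Lemma~\ref{lem:FRp} gives the claim directly.

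When $B\neq\emptyset$, I would apply Lemma~\ref{lem:iteratealmost}. The relations $v^u=v^{-1}$ for $u\in B$ and $[v,u]=1$ for $u\in E$ suggest taking $N$ to be a suitable normal subgroup generated by $v$-related elements, with $T\supseteq N$ containing $v$. The aim is that $H_1=N\rtimes H$ and $S/N=T/N\ltimes H_1/N$ are both almost $2$-direct products, using that all the twisting actions are by inversion and hence trivial modulo $\gamma_2^{[2]}$. The delicate point is that $\langle v\rangle$ is not normal once $A\neq\emptyset$, since $v^u=u^{-2}v$ for $u\in A$. So $N$ must be chosen with care, for example as the normal closure of $v$ intersected appropriately. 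Checking that $T\cap H=1$ is also the step I expect to require the most work.

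As a fallback, I would use Proposition~\ref{prop:crit}. Condition (i) holds via the homomorphism $\tau\colon S\to H/\gamma_2^{[2]}(H)$ sending $v\mapsto 1$ and each $u\mapsto u$: every relation involving $v$ becomes either trivial or $u=u^{-1}$, and the latter holds modulo squares. Condition (ii) would then have to be verified by comparing $\gr^{[2]}_\bullet(G_{\mathrm{lk}(v)})$ with the subalgebra of $\gr^{[2]}_\bullet(S)$ generated in degree one, using the epimorphism $\rho$ of \eqref{eq:epi} on the smaller graph $\mathrm{st}(v)$.
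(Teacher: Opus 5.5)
\textbf{There is a genuine gap: the case $B\neq\emptyset$ is not proved.}

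Your reduction itself is sound. Transitivity, the star--link splitting $G_\Gamma=G_{\Gamma\setminus\{v\}}\ast_{G_{\mathrm{lk}(v)}}G_{\mathrm{st}(v)}$ and Theorem~\ref{teo:strictly} reduce everything to a vertex joined to all other vertices, which is what $v$ is inside $\mathrm{st}(v)$. This is a legitimate variant of the paper's route: its first case splits off a vertex not adjacent to the deleted one, which reduces it to the same cone situation. Your treatment of $B=\emptyset$ via Lemma~\ref{lem:FRp} is also correct.

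The case $B\neq\emptyset$ is the only one with real content, and there you never specify $N$ and $T$. The direction you suggest cannot work. The conditions $N\le T$ and $T\cap H=1$ force $N\cap H=1$, but the normal closure of $v$ contains $v^u v^{-1}=u^{-2}\in H$ for $u\in A$. The fallback does not close the gap either. Hypothesis (ii) of Proposition~\ref{prop:crit} is essentially the injectivity of $\gr^{[2]}_\bullet(H)\to\gr^{[2]}_\bullet(S)$ that you are trying to prove. The epimorphism $\rho\colon L_{\mathrm{st}(v)}\to\gr^{[2]}_\bullet(S)$ only bounds $\gr^{[2]}_\bullet(S)$ from above. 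It therefore cannot show that the subalgebra generated by the degree-one part of $H$ is as large as $\gr^{[2]}_\bullet(H)$. Moreover, identifying $\gr^{[2]}_\bullet(S)$ with $L_{\mathrm{st}(v)}$ is Theorem~\ref{thm:Blum}, whose proof relies on this very proposition.

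\textbf{The missing idea} is to take $T=\langle v\rangle$ and $N=\langle v^2\rangle$. Lemma~\ref{lem:iteratealmost} does not require $T$ to be normal, so the non-normality of $\langle v\rangle$ is irrelevant.
\begin{itemize}
\item $N$ is normal in $S$. Indeed, $v^2$ commutes with $u\in E$. It also commutes with $u\in A$, since $u^{v^2}=(u^{-1})^v=u$. For $u\in B$ one has $(v^2)^u=v^{-2}$.
\item Hence $H_1=NH=N\rtimes H$, with $H$ acting on $N$ by $\pm1$, is an almost $2$-direct product.
\item Modulo $N$, the element $v$ normalizes $H_1/N$. It fixes $u\in E$ and inverts $u\in A$. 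For $u\in B$ one has $u^v=v^{-2}u\equiv u$. So $S/N=T/N\ltimes H_1/N$ is also almost $2$-direct, and Lemma~\ref{lem:iteratealmost} applies.
\end{itemize}
The condition $T\cap H=1$, which you flagged as the hardest step, is in fact easy. Reading off the presentation gives $S/N\cong\Z/2\ltimes H$, so $H\cap N=1$. Only even powers of $v$ could then lie in $H$, and those lie in $N$.

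This is precisely the paper's argument. It covers $B=\emptyset$ as well, so no case distinction is needed.
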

\begin{proof} We proceed by induction on $n=|\euV(\Gamma)|$. Taking into account that being strictly 2-embedded is a transitive relation, it suffices to prove the claim for induced subgraphs $\Delta$ satisfying $\euV(\Delta)=\euV(\Gamma)\setminus\{w\}$ for some $w\in\euV(\Gamma)$. Assume first that there is some $u\in\euV(\Delta)$ such that  $(w,u),(u,w)\not\in\euA$ and $\{w,u\}\not\in\euE$, i.e., there is no arrow nor edge between $u$ and $w$. 
Let $\Delta_1\subseteq\Delta$ resp. $\Gamma_1\subseteq\Gamma$ be the subgraphs induced by $\euV(\Delta)\setminus\{u\}$ resp. $\euV(\Gamma)\setminus\{u\}$. The induction hypothesis implies that the subgroups $G_{\Delta_1}\leq G_\Delta$ and $G_{\Delta_1}\leq G_{\Gamma_1}$ are strictly 2-embedded. Moreover, one has a decomposition as a free amalgamated product
\begin{equation}\label{eq:amalgam}
    G_{\Gamma}=
G_{\Delta}\ast_{G_{\Delta_1}}
G_{\Gamma_1}.
\end{equation}
So Theorem \ref{teo:strictly} implies that 
$G_\Delta$ is strictly 2-embedded in $G_\Gamma$.

The case which remains to be considered is when for any $u\in\euV(\Delta)$ either $(w,u)$ or $(u,w)$ lie in $\euA$, or $\{u,w\}\in\euE$. 
Let $T=\langle w\rangle$, $N=\langle w^2\rangle$ and $H=G_\Delta$. Clearly, $T\cap H=1$.  For any  $u\in\euV(\Delta)$ we must have one of the following possibilities 
\begin{equation}\label{eq:cases}\end{equation}
\begin{itemize} 
    \item either $\{u,w\}\in\euE$ so $[w,u]=1$ and thus $[w^2,u]=1$,
    \item or $\eua=(u,w)\in\euA$, so $u^{w}=u^{-1}$ and thus 
    $[w^2,u]=1$,
    \item or $\eua=(w,u)\in\euA$, so $w^u=w^{-1}$ and thus $(w^2)^u=w^{-2}$.
\end{itemize}
so we deduce that $N\trianglelefteq G_\Gamma$ and also that the semidirect product $H_1=N\rtimes H$ is an almost 2-direct product. But (\ref{eq:cases}) also implies that in the quotient $G_\Gamma/N$, the element $x=wN$ normalizes $H_1/N$ so $H_1/N\triangleleft G_\Gamma/N$ and in fact the semidirect product $G_\Gamma/N=T/N\ltimes H_1/N$ is also an almost 2-direct product. This means that the hypothesis of Lemma \ref{lem:iteratealmost} are satisfied for $p=2$ thus $H=G_\Delta$ is strictly 2-embedded in $G_\Gamma$.
\end{proof}

As a by-product we also obtain:

\begin{proposition}
    Twisted right angled Artin groups are residually 2-finite.
\end{proposition}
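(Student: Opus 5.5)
The plan is induction on $n=|\euV(\Gamma)|$, proving the stronger statement that $G_\Gamma$ is residually $[2]$, i.e.\ $\bigcap_k\gamma_k^{[2]}(G_\Gamma)=1$. This suffices: $G_\Gamma$ is finitely generated and $G_\Gamma/\gamma_2^{[2]}(G_\Gamma)$ is an elementary abelian $2$-group of rank at most $n$, hence finite, so Proposition~\ref{prop:resp} turns residually $[2]$ into residually $2$-finite.

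\emph{Base cases.} For $n=0$ the group is trivial. For $n=1$ we have $G_\Gamma\cong\Z$, and $\gamma_k^{[2]}(\Z)=2^{\lceil\log_2 k\rceil}\Z$, so $\Z$ is residually $[2]$.

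\emph{Inductive step.} I split into the same two cases as in the proof of Proposition~\ref{prop:twRAAGstrict}, applied to a chosen vertex $w$.

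\emph{First case:} there exist $w,u\in\euV(\Gamma)$ joined by neither an edge nor an arrow. Take $\Delta$, $\Gamma_1$ and $\Delta_1$ as in that proof, so that
\[
G_\Gamma=G_\Delta\ast_{G_{\Delta_1}}G_{\Gamma_1}.
\]
By Proposition~\ref{prop:twRAAGstrict}, $G_{\Delta_1}$ is strictly $2$-embedded in both $G_\Delta$ and $G_{\Gamma_1}$. By induction, both factors are residually $[2]$. The second assertion of Theorem~\ref{teo:strictly} then gives that $G_\Gamma$ is residually $[2]$.

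\emph{Second case:} every pair of distinct vertices is joined by an edge or an arrow, i.e.\ $\ddot\Gamma$ is complete. Pick any $w$, let $\Delta$ be the subgraph induced on the remaining vertices, and set $H=G_\Delta$, $N=\langle w^2\rangle$, $T=\langle w\rangle$. As shown in the proof of Proposition~\ref{prop:twRAAGstrict}:
\begin{itemize}
\item $N\trianglelefteq G_\Gamma$;
\item $H_1=N\rtimes H$ is an almost $2$-direct product;
\item $G_\Gamma/N=T/N\ltimes H_1/N$ is an almost $2$-direct product.
\end{itemize}

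The plan for this case runs as follows.
\begin{enumerate}
\item Since $T\cap H=1$ and $T/N\cong\Z/2$, the group $H_1$ has index $2$ in $G_\Gamma$.
\item Apply Lemma~\ref{lem:FRp} to $G_\Gamma/N$. It gives $\gamma_k^{[2]}(G_\Gamma/N)=\gamma_k^{[2]}(T/N)\ltimes\gamma_k^{[2]}(H_1/N)$, and $\gamma_k^{[2]}(T/N)=1$ for $k\geq2$. Hence
\[
\bigcap_k\gamma_k^{[2]}(G_\Gamma)\ \le\ \bigcap_k\gamma_k^{[2]}(H_1)N=:R\ \le\ H_1 .
\]
\item Apply Lemma~\ref{lem:FRp} to $H_1$. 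It gives $\gamma_k^{[2]}(H_1)=\gamma_k^{[2]}(N)\gamma_k^{[2]}(H)$.
\item We know $N\cong\Z$ is residually $[2]$, $H$ is residually $[2]$ by induction, and $H$ is a retract of $H_1$ with kernel $N$.
\item Take $g$ in the intersection and write $g=w^{2m}h$. Projecting to $H$ puts $h$ in $\bigcap_k\gamma_k^{[2]}(H)=1$.
\item It remains to show $w^{2m}\in\bigcap_k\gamma_k^{[2]}(G_\Gamma)$ forces $m=0$. For this I intend to use the homomorphism $G_\Gamma\to\Z/2^j$ sending $w\mapsto1$ and every other vertex to $0$ or $1$ compatibly with the relations (for instance all of $\Delta$ to $0$). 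This map is well defined because the relation $u^v=u^{-1}$ only involves $u$ in degree $\pm$ and forces $2u\mapsto0$ exactly when $u$ is the head of an arrow.
\end{enumerate}

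\emph{Main obstacle.} The hard part is Step 6: separating powers of $w$ from $\bigcap_k\gamma_k^{[2]}(G_\Gamma)$. If $w$ is itself the tail of an arrow $(w,u)$, then $w$ has order dividing $2$ in the abelianization. A cleaner route is to choose $w$ to be a vertex that is never the tail of an arrow, if one exists. If every vertex is the tail of an arrow, then inside a complete graph there is an oriented cycle, and I would have to treat that case separately.

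The fallback is the finite-index structure itself. The subgroup $H_1=N\rtimes H$ is residually $[2]$: this follows by Lemma~\ref{lem:FRp}, since $\gr$ of a semidirect product of residually $[2]$ groups with separated filtrations is separated. It also has index $2$ in $G_\Gamma$. A finite extension of a residually $2$-finite group by a $2$-group is again residually $2$-finite, by taking the normal core of $2$-power index subgroups of $H_1$. Because the target statement is residual $2$-finiteness, this closes the second case directly, without needing residually $[2]$ for $G_\Gamma$.
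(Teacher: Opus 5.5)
Your argument is correct. Your first case (the amalgam $G_\Gamma=G_\Delta\ast_{G_{\Delta_1}}G_{\Gamma_1}$ together with Theorem~\ref{teo:strictly}) is exactly the paper's, but you handle the complete case by a different route.

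\emph{The paper's complete case.} It does not induct there. It notes that the squares $v^2$, $v\in\euV$, commute pairwise and generate a normal subgroup $K$. It asserts, without detailed justification, that $K$ is free abelian, and observes that $G_\Gamma/K$ is elementary abelian of order at most $2^{|\euV|}$. The characteristic subgroups $K^{2^j}$ are then $2$-power index normal subgroups of $G_\Gamma$ with trivial intersection.

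\emph{Your complete case.} You peel off one vertex $w$ and reuse the structure from the proof of Proposition~\ref{prop:twRAAGstrict}. Lemma~\ref{lem:FRp} and the inductive hypothesis on $G_\Delta$ give residual $[2]$-ness of $H_1=N\rtimes G_\Delta$. You then pass to $G_\Gamma$ through the index-$2$ extension via normal cores.

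\emph{Comparison.} The paper's route is shorter, needs no induction in this case, and shows $G_\Gamma$ is virtually $\Z^{|\euV|}$. Its cost is that it rests on $K$ being torsion free. Beyond what the paper already establishes, your route only needs that $w$ has infinite order and $\langle w^2\rangle\cap G_\Delta=1$, and it only requires a vertex joined to all others, not a complete $\ddot{\Gamma}$. Both facts follow from the homomorphism $G_\Gamma\to D_\infty$ that sends $w$ to a translation, each $v$ with $(w,v)\in\euA$ to a reflection, and every other vertex to $1$. Its relations are easy to check, since all vertices other than $w$ land in $\{1,r\}$.

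Two small repairs are needed.

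\emph{Induction bookkeeping.} Your induction hypothesis is ``residually $[2]$'', since that is what Theorem~\ref{teo:strictly} and Lemma~\ref{lem:FRp} consume. Your complete case, however, only outputs ``residually $2$-finite'', so your remark that you can avoid residually $[2]$ for $G_\Gamma$ is not quite right. The loop still closes, because residually $2$-finite implies residually $[2]$: $\gamma_k^{[2]}$ of a finite $2$-group is trivial for large $k$.

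\emph{Steps 2--6.} These can simply be discarded. The obstacle you flag in Step 6 is genuine for maps to $\Z/2^j$. If you wanted that route, the map to $D_\infty$ above works instead, since $D_\infty$ is residually $2$-finite via its dihedral $2$-group quotients. It separates every $w^{2m}$, $m\neq0$, from $\bigcap_k\gamma_k^{[2]}(G_\Gamma)$, whatever the orientation of the arrows at $w$.
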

\begin{proof} Let $\Gamma$ be a mixed graph and consider the group $G_\Gamma$, we proceed by induction on $|\euV(\Gamma)|$. Assume first that the na\"ive graph $\ddot{\Gamma}$ associated to $\Gamma$ is complete, then using (\ref{eq:cases}) above one sees that the subgroup
\[K=\langle\, w^2\mid w\in\euV(\Gamma)\,\rangle\]
is normal in $G_\Gamma$ and moreover that $G_\Gamma/K$ is 2-elementary abelian, in particular a finite 2-group. We also deduce that $K$ is free abelian, so $G_\Gamma$ is an extension of a  free abelian group by a finite 2-group so it is residually 2-finite.

If $\ddot{\Gamma}$ is not complete, choose $u,w\in\Gamma$ so that  $(w,u),(u,w)\not\in\euA$
and $\{u,w\}\not\in\euE$. Then we have a decomposition  (\ref{eq:amalgam}) as in the proof of Proposition \ref{prop:twRAAGstrict}, and the result follows by induction using 
Theorem~\ref{teo:strictly} and the fact that the group $G_{\Delta_1}$ is strictly 2-embedded in $G_{\Delta}$ and $G_{\Gamma_1}$  by 
Proposition~\ref{prop:twRAAGstrict}.
\end{proof}

\begin{theorem} 
\label{thm:Blum} Let $G_{\Gamma}$ be a twisted right angled Artin group. Then 
$$\gr^{[2]}_\bullet(G_\Gamma)=L_{\Gamma}$$
\end{theorem}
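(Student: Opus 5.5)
We argue by induction on $n=|\euV(\Gamma)|$ and show that the epimorphism $\rho\colon L_\Gamma\to\gr^{[2]}_\bullet(G_\Gamma)$ of \eqref{eq:epi} is an isomorphism. The split into cases mirrors the proof of Proposition~\ref{prop:twRAAGstrict}.

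\emph{Non-complete case.} Suppose $\ddot{\Gamma}$ is not complete. Choose $u,w\in\euV(\Gamma)$ that are joined neither by an edge nor by an arrow. Let $\Delta$, $\Gamma_1$ and $\Delta_1$ be the subgraphs induced by $\euV\setminus\{w\}$, $\euV\setminus\{u\}$ and $\euV\setminus\{u,w\}$. Then \eqref{eq:amalgam} gives $G_\Gamma=G_\Delta\ast_{G_{\Delta_1}}G_{\Gamma_1}$, and by Proposition~\ref{prop:twRAAGstrict} the group $G_{\Delta_1}$ is strictly $2$-embedded in both factors. Theorem~\ref{thm:amalgamated} therefore yields
\[
\gr^{[2]}_\bullet(G_\Gamma)=\gr^{[2]}_\bullet(G_\Delta)\ast_{\gr^{[2]}_\bullet(G_{\Delta_1})}\gr^{[2]}_\bullet(G_{\Gamma_1}),
\]
which by induction equals $L_\Delta\ast_{L_{\Delta_1}}L_{\Gamma_1}$. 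On the Lie side, the presentation of $L_\Gamma$ is visibly the amalgamated free product of these presentations: no relation involves both $u$ and $w$, and every other relation lives in $\Delta$ or in $\Gamma_1$. The push-out of restricted Lie algebras over the maps $L_{\Delta_1}\to L_\Delta$ and $L_{\Delta_1}\to L_{\Gamma_1}$ therefore has exactly the presentation of $L_\Gamma$. The identifications are compatible with $\rho$, since everything is induced by the generators, so $\rho$ is an isomorphism. The one point to check is that the amalgam in Theorem~\ref{thm:amalgamated} is the push-out in the category of restricted Lie algebras. This holds because the restricted enveloping functor is fully faithful and carries push-outs to push-outs, as used in that proof.

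\emph{Complete case.} Suppose $\ddot{\Gamma}$ is complete; I expect this to be the main obstacle. I would first count dimensions. Every pair of vertices satisfies a relation of degree two, so $L_\Gamma$ should be spanned by the images of $\euV$ in degree one and by the iterated powers $v^{[2^k]}$. Brackets of degree-one elements reduce to zero or to $u^{[2]}$. Brackets $[x^{[2]},y]$ reduce to $[x,[x,y]]$, which is zero or a bracket of a square; when there is an arrow $(w,u)$ one gets $[w^{[2]},u]=[w,w^{[2]}]=0$, and similar cases vanish too. Hence $\dim (L_\Gamma)_{2^k}\le n$ and all other components vanish. In particular $\dim(L_\Gamma)_m\le n$ if $m$ is a power of $2$, and $(L_\Gamma)_m=0$ otherwise.

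On the group side, the normal subgroup $K=\langle v^2\mid v\in\euV\rangle$ is free abelian of rank $n$ with $G_\Gamma/K$ elementary abelian of order $2^n$. I would compute $\gamma^{[2]}_m(G_\Gamma)$ explicitly: the action of $G_\Gamma$ on $K$ is by sign changes, so $[G_\Gamma,K]\le K^2$. The target is $\gamma^{[2]}_m(G_\Gamma)=K^{2^{k-1}}$ for $2^{k-1}<m\le 2^{k}$, $k\geq1$, which makes the quotient $\gamma^{[2]}_{2^k}/\gamma^{[2]}_{2^k+1}$ an elementary abelian group of rank $n$. Then $\gr^{[2]}_\bullet(G_\Gamma)$ has dimension exactly $n$ in each degree $2^k$ and is zero elsewhere. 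Comparing with the upper bound for $L_\Gamma$, the surjection $\rho$ is injective in each degree, which finishes the induction.

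If the explicit computation of $\gamma^{[2]}_m$ proves delicate, a fallback is Quillen's isomorphism \eqref{eq:Quillen} together with a PBW-type basis count in $\gr_\bullet\F_2G_\Gamma$. Another option is to peel off one vertex $w$ via Lemma~\ref{lem:iteratealmost} and Lemma~\ref{lem:FRp}, realizing $\gr^{[2]}_\bullet(G_\Gamma)$ as an extension of $\gr^{[2]}_\bullet(G_\Delta)$ by the abelian restricted algebra coming from $\langle w\rangle$.
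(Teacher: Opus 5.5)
Your proposal is correct. The non-complete case is exactly the paper's argument. In the complete case you take a genuinely different route.

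\textbf{The paper's complete case.} It fixes one vertex $w$ and sets $N=\langle w^2\rangle$, $T=\langle w\rangle$. It views $G_\Gamma/N$ as an almost $2$-direct product, which gives $\gr^{[2]}_\bullet(G_\Gamma/N)=L_\Gamma/I$ via Lemma~\ref{lem:FRp} and the induction hypothesis. It then compares the ideal $I$ generated by $w^{[2]}$ with the kernel $Z_\bullet$, using that $T$ is strictly $2$-embedded; the details are deferred to \cite{ELMP}.

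\textbf{Your complete case.} You compute both sides directly, and the computation goes through.
\begin{itemize}
\item On the Lie side, every $v^{[2]}$ is central in $L_\Gamma$. Together with $(a+b)^{[2]}=a^{[2]}+b^{[2]}+[a,b]$, this shows that the span of the $v^{[2^k]}$ is a restricted subalgebra, which gives your upper bound.
\item On the group side, $K$ is the subgroup generated by all squares, since $G_\Gamma/K$ is elementary abelian. Hence the subgroup generated by $2^s$-th powers is $K^{2^{s-1}}$.
\item The sign action gives $\gamma_j(G_\Gamma)\le K^{2^{j-2}}$ for $j\ge 2$. Since $2^{j-2}\ge j/2$, the condition $j2^s\ge m>2^{k-1}$ forces $\gamma_j(G_\Gamma)^{2^s}\le K^{2^{k-1}}$. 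This confirms your formula for $\gamma^{[2]}_m(G_\Gamma)$.
\end{itemize}

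\textbf{What you still need to justify.} The input carrying real weight is $K\cong\Z^n$, since it is what gives the lower bound $\dim\gr^{[2]}_{2^k}(G_\Gamma)=n$. The paper only asserts this, in the residual $2$-finiteness proposition, so you should prove it. One quick way: let $v_i$ act on $\mathbb{R}^n$ by $x\mapsto A_ix+\tfrac12 e_i$, where $A_i$ is diagonal with entry $-1$ exactly at the positions $j$ with $(v_j,v_i)\in\euA$. One checks that the defining relations hold and that $v_i^2$ acts as translation by $e_i$. Hence the abelian group $K$, generated by the $n$ elements $v_i^2$, maps onto $\Z^n$.

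\textbf{Comparison.} Your route is self-contained and needs no induction in the complete case. It also yields an explicit description of $\gr^{[2]}_\bullet(G_\Gamma)$ there: dimension $n$ in each degree $2^k$ and zero elsewhere. The paper's vertex-by-vertex argument instead reuses the almost-direct-product and strict-embedding machinery (Lemmas~\ref{lem:FRp} and \ref{lem:iteratealmost}). That is what lets it transfer to settings such as \cite{ELMP}, where no comparably explicit description of the group is available.
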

\begin{proof} Again,  we proceed by induction on $n=|\euV(\Gamma)|$. Assume first that there are 
    $u,w\in\euV(\Gamma)$ such that $(w,u),(u,w)\not\in\euA$ and $\{u,w\}\not\in\euE$. Again, we have a decomposition   (\ref{eq:amalgam}) as in the proof of Proposition \ref{prop:twRAAGstrict}, where by Proposition \ref{prop:twRAAGstrict} the group $G_{\Delta_1}$ is strictly 2-embedded in $G_{\Delta}$ and $G_{\Gamma_1}$. So
we may apply Theorem~\ref{thm:graded} and get
\[\gr_\bullet^{[2]}(G_\Gamma)=\gr_\bullet^{[2]}(G_{\Delta})\ast_{\gr_\bullet^{[2]}(G_{\Delta_1})}\gr_\bullet^{[2]}(G_{\Gamma_1}).\]
As by induction $gr_\bullet^{[2]}(G_{\Delta})=L_\Delta$, $\gr_\bullet^{[2]}(G_{\Delta_1})=L_{\Delta_1}$ and $\gr_\bullet^{[2]}(G_{\Gamma_1})=L_{\Gamma_1}$ 
this yields the desired result.

So we are left with the case when for each $w,u\in\euV(\Gamma)$, either  $(w,u)$ or $(u,w)$ 
is contained in  $\euA$ or $\{u,w\}\in\euE$. Fix some $w\in\euV(\Gamma)$ and consider the groups $N,T,H$ as at the end of the proof of Proposition \ref{prop:twRAAGstrict}. Let also $x=wN$. As observed above, we have an epimorphism
\[L_\Gamma\to\gr_\bullet^{[2]}(G_\Gamma)\]
Let $I$ be the ideal of $L_\Gamma$ generated by $w^{[2]}$. 

For each $u\in\euV(\Gamma)$ in the 2-restricted algebra $L_\Gamma$ one must have one of the following possibilities
\begin{equation}\label{eq:casesLie}\end{equation}
\begin{itemize} 
    \item either $\{u,w\}\in\euE$ so $[w,u]=0$ thus $[w^{[2]},u]=0$,
    \item or $(u,w)\in\euA$ so $[w,u]=u^{[2]}$ thus $[w^{[2]},u]=[w,w,u]=[w,u^{[2]}]=[u,u,w]=[u,u^{[2]}]=0$,
    \item or $(w,u)\in\euA$ so $[u,w]=w^{[2]}$ thus $[w^{[2]},u]=[w,w,u]=[w,w^{[2]}]=0$.
\end{itemize}
This means that $I$ is just the 2-restricted Lie algebra generated by $w^{[2]}$. 

Moreover, as $G_\Gamma/N$ is an almost 2-direct product, one may apply Lemma \ref{lem:FRp} to compute a presentation of
\[\gr_\bullet^{[2]}(G_\Gamma/N)=\gr_\bullet^{[2]}(T/N)_d\ltimes\gr_\bullet^{[2]}(HN/N)\]
where the derivation $d$ is given by $d(u)=0$ for $u\in\euV(\Gamma)$ such that either 
$\{u,w\}\in\euE$ or $(w,u)\in\euA$ and $d(u)=u^{[2]}$ if $(u,w)\in\euA$. Moreover, by induction one has 
\[\gr_\bullet^{[2]}(HN/N)=\gr_\bullet^{[2]}(H)=\gr_\bullet^{[2]}(G_\Delta)=L_\Delta\]
where $\Delta$ is the graph induced by $\euV(\Gamma)\setminus\{w\}$.
This means that 
\[\gr_\bullet^{[2]}(G_\Gamma/N)=L_\Gamma/I\]
and from this one deduces that there is a commutative diagram

$$\begin{tikzcd}
&I\arrow[r,hook]\arrow[d,"\tau_1"]&L_\Gamma\arrow[r,two heads]\arrow[d,"\tau"]&L_\Gamma/I\arrow[d,"\tau_2"]\\
&Z_\bullet\arrow[r,hook]&\gr_\bullet^{[2]}(G_\Gamma)\arrow[r,two heads]&\gr_\bullet^{[2]}(G_\Gamma/N)\\
\end{tikzcd}$$
where $\tau_2$ is an isomorphism and 
\[Z_\bullet=\oplus_{i\geq 1}\gamma_i^{[2]}(G_\Gamma)\cap N/\gamma_{i+1}^{[2]}(G_\Gamma)\cap N.\]
But as the subgroup $T=\langle w\rangle$ is strictly 2-embedded in $G_\Gamma$, we have that for any $i\geq 2$,
\[\gamma_i^{[2]}(G_\Gamma)\cap T=\gamma_i^{[2]}(T)\]
so
\[\gamma_i^{[2]}(G_\Gamma)\cap N=\gamma_i^{[2]}(G_\Gamma)\cap T\cap N=\gamma_i^{[2]}(T)\cap N\]
which means that $Z_\bullet$ can be computed explicitly. The same can be done with $I$ and one deduces that the map $\tau_1$ in the diagram is an isomorphism (for details, see the proof of Proposition 3.5 in \cite{ELMP} where a similar argument is used). Therefore, $\tau$ is also an isomorphism which yields the claim.
\end{proof}

\section{Twisted right-angled Artin pro-$p$ groups}
\label{s:genRAAG}
Let $\Z_p$ denote the \emph{ring of $p$-adic integers} and $v_p\colon\Z_p\to\Z\cup\{\infty\}$ its discrete valuation. Then $1+p\Z_p\subseteq\Z_p^\times$ coincides with the
 Sylow pro-$p$ subgroup of $\Z_p^\times$.

In \cite[(1.1)]{BQW} the authors defined for a prime number $p$, a continuous group homomorphism 
$\lambda\colon\Z_p\to\Z_p^\times$ and a finite mixed
graph  $\Gamma=(\euV,\euE,\euA)$ a finitely generated pro-$p$ group $G_{\Gamma,\lambda}$ and called it an \emph{oriented right-angled Artin pro-$p$ group}. In this section we generalise their definition slightly using a modification of \eqref{eq:defG} starting from an \emph{arrow $p$-labelled finite mixed graph}. In this context we call a finite mixed graph $\Gamma=
(\euV,\euE,\euA)$ together with a function
\begin{equation}
    \label{eq:arlabel}
    \ell\colon\euA\longrightarrow 1+p\Z_p
\end{equation}
to be an \emph{arrow $p$-labelled finite mixed graph}. For an arrow $p$-labelled finite mixed graph $(\Gamma,\ell)$ we define
\begin{equation}
\label{eq:orgenpropRAAG}   
G_{\Gamma,\ell}=\Bigg\langle\  \euV\  \Bigg\vert\ 
\begin{aligned}
[v,u]&=1\qquad\ \ \text{if 
$\{u,v\}\in\euE$,}\\
u^v&=u^{\ell(\eua)}\quad\text{if $\eua=(u,v)\in\euA$} \\
\end{aligned}
\Bigg\rangle_{\propp}
\end{equation}
and call $G_{\Gamma,\ell}$ the 
\emph{twisted right-angled Artin pro-$p$ group
associated to $(\Gamma,\ell)$}.
In case that 
$\Gamma$ has no arrows 
or if $\ell=\mathbf{1}$ is identically $1$, $G_{\Gamma,\ell}$ coincides
with the pro-$p$ completion of the right-angled Artin pro-$p$ group $G_{\ddGamma}$ based on the finite na\"ive graph
$\ddGamma=(\euV,\ddot{\euE})$.
The oriented right-angled Artin pro-$p$
group $G_{\Gamma,\lambda}$ coincides with the
twisted right-angled Artin pro-$p$ group
$G_{\Gamma,\ell}$, where the labelling function $\ell\colon\euA\to 1+p\Z_p$ is identically
$\lambda(1)$. Moreover, the pro-$2$ completion of the
twisted right-angled Artin group $G_{\Gamma}$ based
on the finite mixed graph $\Gamma$, coincides with
twisted right-angled Artin pro-$2$ group
$G_{\Gamma,\ell}$, where $\ell\colon\euA\to 1+2\Z_2$
is identically $-1$.

In \cite[Question~1.4]{BQW}) the authors formulated a question concerning the isomorphism type
of the restricted Zassenhaus $\F_p$-Lie algebra or equivalently its graded $\F_p$-algebra 
$\gr_\bullet\F_p\dbl G_{\Gamma,\lambda}\dbr$ associated to its completed $\F_p$-group algebra $\F_p\dbl G_{\Gamma,\lambda}\dbr$ in case that $\Gamma$ is a \emph{special} mixed graph and
$\image(\lambda)\subseteq 1+4\Z_2$ in case that $p=2$. 

Without going into detail it should be mentioned that
a special mixed graph $\Gamma=(\euV,\euE,\euA)$ does not contain oriented cycles, and therefore there exists a vertex $w\in\euV$ such that no arrow is starting in $w$, i.e., for all $u\in\euV$ one has $(w,u)\not\in\euA$. Moreover, defining $\Gamma_1,\Lambda\subseteq\Gamma$ and  as the finite mixed subgraphs induced by $\euV\setminus\{w\}$ and  $\{\,u\in\euV\mid (u,w)\in\euA\text{ or }\{u,w\}\in\euE\}$, respectively,
one has
\begin{equation}
\label{eq:deco3}
    G_{\Gamma,\ell}\cong G_{\Gamma_1,\ell}\ast^{(p)}_{H,\varphi,w}
\end{equation}
where $H=G_{\Lambda,\ell}$ and $\varphi\colon H\to H$
is the automorphism induced by $\varphi(v)=v^{\ell(v,w)}$. Here we put $\ell(u,w)=1$ if $\{u,w\}\in\euE$ is an edge. As for any prime $p$ one has $\ell(u,w)\in 1+p\cdot\Z_p$,  $\varphi$ is almost $p$-trivial.

\begin{theorem}
    \label{thm:propRAAG}
    Let $G_{\Gamma,\ell}$ be a twisted right-angled Artin pro-$p$ group based on the
    arrow $p$-labelled finite mixed graph $(\Gamma,\ell)=(\euV,\euE,\euA,\ell)$.  Assume further that $\Gamma$ has no oriented cycle and, in case that 
    $p=2$, that $\image(\ell)\subseteq 1+4\Z_2$. Then
    \begin{equation}
    \label{eq:propGR1}\gr_\bullet^{[p]}(G_{\Gamma,\ell})\cong(L_{\ddot{\Gamma}}\otimes_{\Z}\F_p)^{[p]}\end{equation}
    and $H^\bullet(G_{\Gamma,\ell},\F_p)\cong\Lambda^\bullet(\ddGamma^{\op})$.
    \end{theorem}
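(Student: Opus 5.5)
We argue by induction on $|\euV|$, using the decomposition \eqref{eq:deco3} together with Corollary~\ref{cor:propHNN}. If $|\euV|\le 1$ then $G_{\Gamma,\ell}$ is trivial or $\Z_p$, and $\gr_\bullet^{[p]}(\Z_p)$ is the free restricted $\F_p$-Lie algebra on one generator. This is $(L_{\ddot\Gamma}\otimes_\Z\F_p)^{[p]}$, the restricted Lie algebra generated by the ordinary RAAG Lie algebra, and $H^\bullet(\Z_p,\F_p)=\Lambda^\bullet$ on one generator. For the inductive step, choose a vertex $w$ with no outgoing arrow; one exists since $\Gamma$ has no oriented cycles. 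Let $\Gamma_1$ and $\Lambda$ be as in the text, so that $G_{\Gamma,\ell}\cong G_{\Gamma_1,\ell}\ast^{(p)}_{H,\varphi,w}$ with $H=G_{\Lambda,\ell}$. Both $\Gamma_1$ and $\Lambda$ are induced subgraphs without oriented cycles and with labels in $1+p\Z_p$ (resp.\ $1+4\Z_2$). The induction hypothesis therefore gives $\gr^{[p]}_\bullet(G_{\Gamma_1,\ell})\cong(L_{\ddot\Gamma_1}\otimes\F_p)^{[p]}$ and $\gr^{[p]}_\bullet(H)\cong(L_{\ddot\Lambda}\otimes\F_p)^{[p]}$.

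The first step, and the main obstacle, is to show that $H$ is strictly $p$-embedded in $G_{\Gamma_1,\ell}$, i.e.\ that the induced subgroup for any induced subgraph is strictly embedded. I would prove this jointly with the theorem, by the same induction, using Proposition~\ref{prop:crit}. For condition (i) one needs a continuous homomorphism $G_{\Gamma_1,\ell}\to H/\gamma_2^{[p]}(H)$ sending the vertices of $\Lambda$ to their images and the remaining vertices to $1$. Every relation maps to $1$: commutators are trivial in the abelian target, and $u^v=u^{\ell}$ becomes $\bar u^{\ell-1}=1$ in $H/\gamma_2^{[p]}(H)=\F_p^{\Lambda}$, because $\ell\equiv1\bmod p$. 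Condition (ii) asks that the restricted subalgebra of $(L_{\ddot\Gamma_1}\otimes\F_p)^{[p]}$ generated by the vertices of $\Lambda$ is $(L_{\ddot\Lambda}\otimes\F_p)^{[p]}$. For ordinary RAAG Lie algebras this is standard: there is a retraction killing the other vertices. Passing to restricted hulls via PBW and its restricted version preserves this, which gives (ii).

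Next apply Corollary~\ref{cor:propHNN}. The automorphism $\varphi(v)=v^{\ell(v,w)}$ acts trivially on $H/\gamma_2^{[p]}(H)$, and the induced derivation $d$ on $\gr_1^{[p]}(H)$ is $\bar v\mapsto(\ell(v,w)-1)\bar v$ read in the appropriate degree. Since $\ell-1\in p\Z_p$, the commutator $[v,w]=v^{\ell-1}$ lies in $\gamma^{[p]}_{p}$; for $p=2$ the hypothesis $\ell\in1+4\Z_2$ puts it in $\gamma_4^{[2]}$. In either case this is strictly deeper than $\gamma_3^{[p]}$, so $d=0$ in $\gr^{[p]}_\bullet$. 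This is exactly where the $1+4\Z_2$ hypothesis enters: for $\ell=-1$ one recovers Section~\ref{s:TRAAG}'s relation $[v,u]=u^{[2]}$. Hence
$$\gr_\bullet^{[p]}(G_{\Gamma,\ell})\cong \gr_\bullet^{[p]}(G_{\Gamma_1,\ell})\ast_{\gr_\bullet^{[p]}(H),0},$$
which is the restricted Lie algebra obtained from $(L_{\ddot\Gamma_1}\otimes\F_p)^{[p]}$ by adjoining $w$ commuting with $\Lambda$. Comparing presentations, this is $(L_{\ddot\Gamma}\otimes\F_p)^{[p]}$.

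For the cohomology, the Quillen isomorphism identifies $\gr_\bullet\F_p\dbl G_{\Gamma,\ell}\dbr$ with the universal enveloping algebra of $L_{\ddot\Gamma}\otimes\F_p$. This is the Koszul algebra $\F_p\ddot\Gamma$ (the right-angled Artin algebra), whose Koszul dual is $\Lambda^\bullet(\ddot\Gamma^{\op})$. I would then obtain $H^\bullet(G_{\Gamma,\ell},\F_p)\cong\Lambda^\bullet(\ddot\Gamma^{\op})$ in one of two ways. The first is via the May spectral sequence $\mathrm{Ext}_{\gr\F_p\dbl G\dbr}(\F_p,\F_p)\Rightarrow H^\bullet(G,\F_p)$, which collapses because the $E_1$ term is concentrated on the diagonal by Koszulity. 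Alternatively, one can use induction via the Mayer–Vietoris sequence for the pro-$p$ HNN-extension, which is proper because the edge groups are strictly $p$-embedded. Either way, the degree bookkeeping gives the stated exterior face algebra.
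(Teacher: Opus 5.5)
Your argument follows the paper's proof step for step:
\begin{itemize}
\item induction on $|\euV|$ through \eqref{eq:deco3};
\item strict $p$-embedding of $G_{\Lambda,\ell}$ in $G_{\Gamma_1,\ell}$ via Proposition~\ref{prop:crit}, using the abelian projection for (i) and the retraction $L_{\ddot\Gamma_1}\to L_{\ddot\Lambda}$ for (ii);
\item vanishing of the derivation because $v^{\ell-1}\in\cl(\gamma_3^{[p]})$, which is the only place where $1+4\Z_2$ is used;
\item Corollary~\ref{cor:propHNN}, then Koszulity and collapse of May's spectral sequence for the cohomology.
\end{itemize}
The deviations are cosmetic. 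You run the HNN induction with $\varphi=\iid$ in the arrow-free case instead of citing \cite{MRW}. For $p=3$, $v^{\ell-1}$ lies in $\gamma_3^{[3]}$ but not ``strictly deeper''; containment is all you need.

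There is, however, a genuine gap, and your write-up shares it with the paper's proof: \eqref{eq:deco3} does not follow from acyclicity. The text only asserts it for special mixed graphs.

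The HNN extension requires $\varphi(x)=x^{\ell(x,w)}$ to define an endomorphism of $H=G_{\Lambda,\ell}$. For an arrow $(x,y)$ of $\Lambda$ with label $a$, this means $x^{\varphi(y)}=x^{a}$. But $x^{y^{b}}=x^{a^{b}}$ with $b=\ell(y,w)$, and $a^{b}\neq a$ as soon as $a\neq 1\neq b$, since $1+p\Z_p$ (resp.\ $1+4\Z_2$) is torsion free.

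This breaks the statement itself, not just the proof. Take $\euV=\{u,v,w\}$, $\euE=\emptyset$, and arrows $(u,v),(v,w),(u,w)$ with labels $a,b,c$ in $1+p\Z_p$ (resp.\ $1+4\Z_2$), where $a\neq1\neq b$. This graph has no oriented cycle.
\begin{itemize}
\item Conjugating $u^{v}=u^{a}$ by $w$ in $G_{\Gamma,\ell}$ gives $(u^{c})^{v^{b}}=u^{ac}$, i.e.\ $u^{c(a^{b}-a)}=1$.
\item Hence $u$ has finite order $p^{m}$ for some $m$, and $\bar u^{[p^{m}]}=0$ in $\gr_\bullet^{[p]}(G_{\Gamma,\ell})$.
\item Yet $\bar u\neq 0$ in degree $1$, because all labels are $\equiv 1\bmod p$.
\item On the other side, $\ddot\Gamma$ is a triangle. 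So $(L_{\ddot\Gamma}\otimes_\Z\F_p)^{[p]}$ embeds into its enveloping algebra $\F_p[u,v,w]$, with the $p$-map sent to $p$-th powers.
\item It therefore has no nonzero $[p]$-nilpotent elements, so the two algebras cannot be isomorphic.
\end{itemize}

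Your induction therefore needs an extra hypothesis guaranteeing \eqref{eq:deco3} at every step. One sufficient condition: there is no directed path $x\to y\to z$ with $\ell(x,y)\neq1\neq\ell(y,z)$ and $x$ adjacent to $z$. This condition is inherited by induced subgraphs, and under it the remaining steps of your argument go through unchanged.
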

   

Here $L_{\ddGamma}$ denotes the right-angled Artin $\Z$-Lie algebra
with presentation
\begin{equation}
    \label{eq:RAAL}
    L_{\ddGamma}=\langle\, v\in\euV\mid [u,v] \text{ for } \{u,v\}\in\ddot{\euE}\,\rangle
\end{equation}
associated to the finite na\"ive graph 
$\ddGamma=(\euV,\ddot{\euE})$ (cf. \S\ref{s:TRAAG}),
and $(\argu)^{[p]}$ denotes the canonical restrictification functor
(cf \cite[Prop.1]{MRW}). By $\Lambda^\bullet(\Xi)$ we denote the Stanley-Reisner
algebra associated to a finite na\"ive graph $\Xi$.
Note that \eqref{eq:propGR1} implies that $\gr_\bullet(\F_p\dbl G_{\Gamma,\ell}\dbr)$
coincides with the Cartier-Foata algebra $A_{\ddGamma}\otimes_{\Z}\F_p$ based on the finite na\"ive graph $\ddGamma$ (cf. \cite{carf}). Thus, by a theorem of R.~Fr\"oberg, it is a Koszul $\F_p$-algebra - a property (cf. \cite{Prid}) which is expected to hold for all pro-$p$ groups occuring as maximal pro-$p$ Galois group $G_{\mathbb{K}}(p)$ of a field ${\mathbb{K}}$ (cf. \cite{W}). The second part of Theorem~\ref{thm:propRAAG}
then can be deduced from the first using the fact that if $\gr_\bullet\F_p\dbl G\dbr$ is Koszul, May's spectral sequence  collapses at the $E_1$-term.

\begin{proof}[Proof of Theorem~\ref{thm:propRAAG}]
It suffices to show \eqref{eq:propGR1}. As in the proof of Theorem~\ref{thm:Blum} we proceed by induction on $|\euV(\Gamma)|$. The induction hypothesis implies that for any proper induced subgraph $\Lambda$ of $\Gamma$, one has
$\langle\, x\in\euV(\Lambda)\,\rangle\cong
(L_{\ddot{\Lambda}}\otimes_\Z\F_p)^{[p]}$.
If $\euA=\emptyset$, then $G_{\Gamma,\ell}$ is the pro-$p$ completion of a right-angled Artin group, and there is nothing to prove 
(cf. \cite[Theorem~B and the examples thereafter]{MRW}). 
In the other case, taking into account that there is no oriented cycle in $\Gamma$, there is some $w\in\euV$ such that one has a decomposition
\eqref{eq:deco3}. By the induction hypothesis, \begin{align*}
    \gr_\bullet^{[p]}(G_{\Gamma_1,\ell})&\cong (L_{\ddot{\Gamma}_1}\otimes_{\Z}\F_p)^{[p]},\\
    \gr_\bullet^{[p]}(G_{\Lambda,\ell})&\cong(L_{\ddot{\Lambda}}\otimes_{\Z}\F_p)^{[p]}.
\end{align*}
Since $(L_{\ddot{\Lambda}}\otimes_{\Z}\F_p)^{[p]}$ is a retract of $(L_{\ddot{\Gamma_1}}\otimes_{\Z}\F_p)^{[p]}$, and
as there is canonical projection
$\tau\colon G_{\Gamma_1,\ell}\to G_{\Lambda,\ell}/cl(\gamma_2^{[p]}(G_{\Lambda,\ell}))$
satisfying the hypothesis (i) of Proposition~\ref{prop:crit}, $G_{\Lambda,\ell}$ is strictly $p$-embedded in $G_{\Gamma_1,\ell}$. Reasoning as in the proof of Theorem~\ref{thm:Blum} yields the relations
\begin{align}
[\mathfrak{v},\mathfrak{u}]&=0\qquad\ \ \text{if $\mathfrak{u}$ and $\mathfrak{v}$ are linked by an edge,}\notag\\
[\mathfrak{w},\mathfrak{u}]&=
\mathfrak{u}^{\ell(\eua)-1}\text{ if 
$\mathfrak{u}$ and $\mathfrak{w}$ are linked by an arrow $\eua\colon\mathfrak{u}\to \mathfrak{w}.$ }\notag
\end{align}
   For $p=2$, by hypothesis $u^{\ell(\eua)-1}\in G_{\Gamma,\ell}^4\subseteq\gamma_3^{[2]}(G_{\Gamma,\ell})$, while for $p$ odd,
   one has
   $u^{\ell(\eua)-1}\in 
   G_{\Gamma,\ell}^p\subseteq\gamma_3^{[p]}(G_{\Gamma,\ell})$ (cf. 
   \eqref{eq:Jen}. Thus by Corollary~\ref{cor:propHNN} one obtains
   $\gr_\bullet^{[p]}(G_{\Gamma,\ell})\simeq
   \gr_\bullet^{[p]}(G_{\Gamma_1,\ell})\ast_{\gr_\bullet^{[p]}(G_{\Lambda,\ell}),w,0}$ which yields the claim.
\end{proof}

\begin{remark}
    \label{rem:final}
    From Theorem~\ref{thm:propRAAG} one concludes that for a finite special graph $\Gamma$ and a continuous homomorphism $\lambda\colon\Z_p\to
    1+p.\Z_p$ (resp. $\lambda\colon \Z_2\to1+4\Z_2$
    if $p=2$), for the oriented right-angled Artin pro-$p$ group
    $G_{\Gamma,\lambda}$ one has
    $\gr_\bullet(G_{\Gamma,\lambda})\simeq
    (L_{\ddot{\Gamma}}\otimes_{\Z}\F_p)^{[p]}$
    answering \cite[Question~1.4]{BQW} affirmatively.
\end{remark}

The interested reader might wonder whether one may expect a result combining a large part of Theorem~\ref{thm:Blum} and Theorem~\ref{thm:propRAAG}. More precisely, one may ask whether the following question has an affirmative answer or not.

\begin{question}
\label{ques:final}
Let $(\Gamma,\ell)$ be a finite arrow labelled mixed graph. Assume further that $\Gamma$ has no oriented cycles. Is it true that
\begin{equation}
    \label{eq:final1}
    \gr_\bullet^{[p]}(G_{\Gamma,\ell})\simeq L_{\Gamma,\ell},
\end{equation}
where
\begin{equation}
\label{eq:defBlum2}
L_{\Gamma,\ell}=\Bigg\langle\  \euV\ \Bigg\vert\ 
\begin{aligned}
[v,u]&=0\qquad \text{if 
$\{u,v\}\in\euE$ or $p$ odd and $(u,v)\in\euA$,}\\
&\qquad\ \ \ \text{or $p=2$, $\eua=(u,v)\in\euA$ and $v_2(\ell(\eua)-1)\geq 2$,}\\
[v,u]&=u^{[2]}\quad\text{if $\eua=(u,v)\in\euA$, $p=2$,
$v_2(\ell(\eua)-1)=1$} \\
\end{aligned}\Bigg\rangle.
\end{equation}   
\end{question}

\end{document}